\theoremstyle{plain}
\newtheorem{thm}{Theorem}[section]
\newtheorem{prop}[thm]{Proposition}
\newtheorem{lemma}[thm]{Lemma}
\newtheorem{cor}[thm]{Corollary}
\theoremstyle{definition}
\newtheorem{defn}[thm]{Definition}
\theoremstyle{remark}
\newtheorem*{rmk}{Remark}
\newtheorem*{rmks}{Remarks}
\newcommand{\C}{\mathbb{C}}
\newcommand{\Hb}{\mathbb{H}}
\newcommand{\Z}{\mathbb{Z}}
\newcommand{\Q}{\mathbb{Q}}
\newcommand{\N}{\mathbb{N}}
\newcommand{\R}{\mathbb{R}}
\newcommand{\slz}{{\text {\rm SL}}_2(\mathbb{Z})}
\newcommand{\re}{\textnormal{Re}}
\newcommand{\vt}[1]{\left\lvert #1 \right\rvert}
\newcommand{\sgn}{\operatorname{sgn}}
\newcommand{\tr}{\operatorname{tr}}
\newcommand{\CT}{\operatorname{CT}}
\newcommand{\Iso}{\operatorname{Iso}}
\newcommand{\reg}{\textnormal{reg}}
\newcommand{\erfc}{\operatorname{erfc}}
\newcommand{\dm}{\mathrm{d}}
\newcommand{\Ec}{\mathcal{E}}
\newcommand{\Fc}{\mathcal{F}}
\newcommand{\Gc}{\mathcal{G}}
\newcommand{\Hc}{\mathcal{H}}
\newcommand{\af}{\mathfrak{a}}
\newcommand{\bff}{\mathfrak{b}}
\newcommand{\cf}{\mathfrak{c}}
\newcommand{\Cf}{\mathfrak{C}}
\newcommand{\ef}{\mathfrak{e}}
\newcommand{\ff}{\mathfrak{f}}
\newcommand{\lf}{\mathfrak{l}}
\newcommand{\tf}{\mathfrak{t}}
\newcommand{\sfr}{\mathfrak{s}}
\newcommand{\Hs}{\mathscr{H}}
\newcommand{\Ks}{\mathscr{K}}
\newcommand{\id}{\mathrm{id}}
\numberwithin{equation}{section}
\setlist{nosep}
\setlist{noitemsep}
\let\@@pmod\pmod
\DeclareRobustCommand{\pmod}{\@ifstar\@pmods\@@pmod}
\def\@pmods#1{\mkern4mu({\operator@font mod}\mkern 6mu#1)}
\title{A modular framework for generalized Hurwitz class numbers II}
\author{Olivia Beckwith}
\address{Department of Mathematics, Tulane University, New Orleans, LA 70118}
\email{obeckwith@tulane.edu}
\author{Andreas Mono}
\address{Department of Mathematics, Vanderbilt University, 1326 Stevenson Center, Nashville, TN 37240, USA}
\email{andreas.mono@vanderbilt.edu}
\date{\today}
\begin{document}

\begin{abstract}
In a recent preprint, we constructed a sesquiharmonic Maass form $\Gc$ of weight $\frac{1}{2}$ and level $4N$ with $N$ odd and squarefree. Extending seminal work by Duke, Imamo\={g}lu, and T\'{o}th, $\Gc$ maps to Zagier's non-holomorphic Eisenstein series and a linear combination of Pei and Wang's generalized Cohen--Eisenstein series under the Bruinier--Funke operator $\xi_{\frac{1}{2}}$. In this paper, we realize $\Gc$ as the output of a regularized Siegel theta lift of $1$ whenever $N=p$ is an odd prime building on more general work by Bruinier, Funke and Imamo\={g}lu. In addition, we supply the computation of the square-indexed Fourier coefficients of $\Gc$. This yields explicit identities between the Fourier coefficients of $\Gc$ and all quadratic traces of $1$. Furthermore, we evaluate the Millson theta lift of $1$ and consider spectral deformations of $1$.
\end{abstract}

\subjclass[2020]{11F37 (Primary); 11F12, 11F27, 11F30 (Secondary)}

\keywords{Eisenstein series, Hurwitz class numbers, Kohnen plus space, Maass forms, Modular forms, Quadratic traces, Theta lifts}

\thanks{The first author was partially supported by Simons Foundation Collaboration Grant \#953473 and National Science Foundation Grant DMS-2401356.}

\maketitle

\section{Introduction and statement of results}

\subsection{Overall motivation}
Half integral weight modular forms arise naturally from Dedekind's $\eta$-function (``$\eta$-quotients'') or from theta functions of quadratic forms with an odd number of variables such as Jacobi's theta function 
\begin{align} \label{eq:thetadef}
\theta(\tau) \coloneqq \sum_{n \in \Z} q^{n^2}, \qquad q \coloneqq e^{2\pi i \tau}, \qquad \tau \in \Hb \coloneqq \left\{\tau = u+iv \in \C \colon v > 0\right\},
\end{align}
and odd powers thereof. Furthermore, it turns out that half integral forms can be constructed from integral weight ones, and vice versa. This goes back to Shimura \cite{shim}, who utilized modular $L$-series to associate an integral weight modular form to every half integral weight form. The Shimura map, and the converse Shintani map \cite{shin}, have found important applications in the Waldspurger formula \cites{Gross-Kohnen-Zagier, koza81, koza84, kohnen85, wald1,wald2}. Subsequently, these maps were embedded into the broader framework of CM-traces. For example, work of Zagier \cite{zagier02} shows that CM traces of modular functions are weight $\frac{3}{2}$ modular forms. More specifically, if $f$ is a modular function for $\slz$ and $D < 0$ is a discriminant, we define the discriminant $D$ trace of $f$ as
\begin{align*}
\tr_D(f) \coloneqq \sum_{Q \in \mathcal{Q}_D \slash {\text {\rm PSL}}_2(\mathbb{Z})} \frac{f\left(z_Q\right)}{\vt{{\text {\rm PSL}}_2(\mathbb{Z})_Q}}, \qquad z_Q \coloneqq \frac{-b + \sgn(a) i\sqrt{\vt{D}}}{2a},
\end{align*}
where $\mathcal{Q}_d$ is the set of integral binary quadratic forms with discriminant $d \in \Z$, and $Q(x,y) = ax^2 + bxy + cy^2 \in \mathcal{Q}_D$. Letting $j$ be Klein's $j$-invariant, Zagier \cite{zagier02} proved that
\begin{align*}
-q^{-1} + 2 + \sum_{D < 0} \tr_D(j - 744) q^{-D}
\end{align*}
is a weakly holomorphic weight $\frac{3}{2}$ modular form on $\Gamma_0(4)$. Other examples of modular CM trace generating functions include a formula for the partition function $p(n)$ by work of Bruinier and Ono \cite{bruono13} and a formula for Andrews' $\operatorname{spt}(n)$ function \cite{andrews} by Ahlgren and Andersen \cite{ahland}. 

Many of these results can be established and extended using theta lifts. The general idea is to compute a (regularized) Petersson inner product of a two-variable theta function and an integral weight modular form. Then, one shows that the Fourier coefficients involve the CM-traces of the integral weight form. Niwa \cite{niwa} re-established Shimura's map along these lines, while Bruinier and Funke \cite{brufu06} embedded Zagier's aforementioned result into the framework of theta lifts and gave vector-valued extensions for arbitrary congruence subgroups. Katok and Sarnak \cite{katoksarnak} provided an analogous construction for Maass cusp forms. 

Choosing the modular function $1$ instead of $j-744$, Funke \cite{funke} re-interpreted \emph{Zagier's non-holomorphic Eisenstein series} \cites{zagier75, zagier76}
\begin{align} \label{eq:Hcdef}
\Hc(\tau) \coloneqq -\frac{1}{12} + \sum_{n \geq 1} \tr_{-n}(1) q^n + \frac{1}{8\pi\sqrt{v}} + \frac{1}{4\sqrt{\pi}} \sum_{n \geq 1} n \Gamma\left(-\frac{1}{2},4\pi n^2 v\right) q^{-n^2},
\end{align} 
as a theta lift, which in turn re-established modularity of $\Hc$. Here and throughout, $\Gamma(s,z)$ denotes the principal branch of the incomplete $\Gamma$-function. The form $\Hc$ demonstrates that it is necessary to consider non-holomorphic modular forms to understand modular properties of the imaginary quadratic traces of $1$. Duke, Imamo\={g}lu, and T\'{o}th \cite{dit11annals} extended this observation to real quadratic traces. More precisely, they proved that the holomorphic generating functions of the real quadratic traces of $j(\tau) - 744$ and $1$ are not modular but are holomorphic parts of so-called ``harmonic Maass forms'' and ``sesquiharmonic Maass forms'' (both defined in Section \ref{sec:prelim}). Subsequent work by Bruinier, Funke, and Imamo\={g}lu \cite{brufuim} connected these results to a regularized Siegel theta lift. In this paper, we combine the case of input function $1$ (the ``sesquiharmonic case'') in Bruinier, Funke, and Imamo\={g}lu's work with our previous work \cite{bemo} to prove explicit formulas for various $L$-values arising as coefficients of such Maass forms.

\subsection{Previous results}
In a previous paper \cite{bemo}, we constructed the first example of a higher level sesquiharmonic Maass form. Such forms appeared first in the aforementioned paper by Duke, Imamo\={g}lu and T\'{o}th \cite{dit11annals} and have seen numerous applications since then \cites{alansa, air, abs, anlagrho, brika20, brdira, jkk14, jkk24, jkkm24, lagrho, mat19, mat20} (including some very recent ones). Duke, Imamo\={g}lu and T\'{o}th's construction is based on the Maass--Eisenstein series
\begin{align}  \label{eq:Fcdef}
\begin{split}
\Fc_{k,N}(\tau,s) &\coloneqq \sum_{\gamma \in \left(\Gamma_0(N)\right)_{\infty} \backslash \Gamma_0(N)} v^{s-\frac{k}{2}} \Big\vert_k\gamma, \quad k \in -\frac{1}{2}\N + 1, \quad \re(s) > 1-\frac{k}{2}, \quad N \in \N, \\
\Fc_{k,4N}^+(\tau,s) &\coloneqq \mathrm{pr}^+\Fc_{k,4N}(\tau,s), \qquad k \in -\N + \frac{3}{2},
\end{split}
\end{align}
in weight $\frac{1}{2}$ and level $4$ projected to Kohnen's plus space. Explicitly, their form is given by
\begin{align} \label{eq:Ecdef}
\Ec(\tau) \coloneqq \frac{1}{3}\frac{\partial}{\partial s} \left(s-\frac{3}{4}\right)\Fc_{\frac{1}{2},4}^+(\tau,s)\Big\vert_{s=\frac{3}{4}}
\end{align}
and it turns out that $\Ec$ is a preimage of Zagier's non-holomorphic Eisenstein series $\Hc$ (see equation \eqref{eq:Hcdef}) under the Bruinier--Funke \cite{brufu02} differential operator 
\begin{align*}
\xi_{\frac{1}{2}} \coloneqq 2iv^{\frac{1}{2}}\overline{\frac{\partial}{\partial\overline{\tau}}}.
\end{align*}
In today's terminology, $\Hc$ is a harmonic Maass form of weight $\frac{3}{2}$ and level $4$, which motivates the term ``sesquiharmonic" for the form $\Ec$.

We let $N$ be an odd and square-free level. Paralleling Duke, Imamo\={g}lu and T\'{o}th's construction, we defined and investigated the function
\begin{align} \label{eq:Gcdef}
\Gc(\tau) \coloneqq \frac{\partial}{\partial s} \left(s-\frac{3}{4}\right)\Fc_{\frac{1}{2},4N}^+(\tau,s) \Big\vert_{s=\frac{3}{4}}. 
\end{align}
To describe its significance briefly, we let
\begin{align} \label{eq:CEdef}
\Hs_{\ell,N}(\tau) \coloneqq \sum_{n \geq 0} H_{\ell,N}(n) q^n, \qquad \ell \mid N,
\end{align}
be Pei and Wang's \emph{generalized Cohen--Eisenstein series} \cite{peiwang}, where $H_{\ell,N}(n)$ are the generalized Hurwitz class numbers defined in equations \eqref{eq:HellNdef} and \eqref{eq:HNNdef} below. If $N=1$, we have that $H_{1,1}(n) = \tr_{-n}(1)$ are the classical Hurwitz class numbers. If $N$, $\ell > 1$ then $\Hs_{\ell,N}$ is a modular form of weight $\frac{3}{2}$ and level $4N$ in the plus subspace. Altogether, they generate the Eisenstein plus subspace $E_{\frac{3}{2}}^+(4N)$, while the form $\Hs_{1,N}$ fails to be modular. These series generalize classical work by Cohen \cite{cohen75} in the level aspect, and they appeared in \cites{mmrw, luozhou} very recently. This established, the main result of our previous paper \cite{bemo}*{Theorem 1.3 (iii)} asserts that $\Gc$ is a sesquiharmonic Maass form of weight $\frac{1}{2}$ and level $4N$ satisfying
\begin{align} \label{eq:GcShadow}
\frac{1}{4}\xi_{\frac{1}{2}} \Gc(\tau) = \Big(\prod_{p\mid N} \frac{1}{p+1}\Big) \left(\Hs_{1,1}(\tau) - \Hc(\tau)\right) - \frac{1}{N} \sum_{\ell \mid N} \ell \Big(\prod_{p \mid \ell} \frac{1}{1-p}\Big) \Hs_{\ell,N}(\tau).
\end{align}
In particular, this implies that a certain linear combination of $H_{1,1}(n)$ and $H_{1,N}(n)$ generates a modular form of weight $\frac{3}{2}$ and level $4N$ for $N > 1$ odd and square-free, see \cite{bemo}*{Theorem 1.1}. Furthermore, this gives rise to a higher level analog of Zagier's Eisenstein series from equation \eqref{eq:Hcdef} that involves Pei and Wang's numbers $H_{1,N}(n)$ instead of the classical Hurwitz class numbers $H_{1,1}(n)$, see \cite{bemo}*{Theorem 1.2}.

\subsection{Main results}
Among the various theta lifts in the literature, the aforementioned regularized Siegel theta lift $I^{\reg}(\tau,f)$ of a weight $0$ weak Maass form $f$ to a form of weight $\frac{1}{2}$ (both defined in Section \ref{sec:prelim}) has attracted a lot of attention over the past decades as well as very recently, see \cites{bor98, bor00, bruinierhabil, brufu02, BEY, anbs, BS, males, mamo, brufuim, alfesthesis, schw18, crfu, hoevel, BO, BY} to name a few. The regularization goes back to an idea by Borcherds \cite{bor98} and Harvey--Moore \cite{hamo} and employs a truncated fundamental domain and meromorphic continuation. Following \cite{brufuim}*{(5.4)}, we denote  the component functions of the vector-valued Siegel theta lift by $I_h^{\reg}(\tau,f)$ throughout. Then, we employ the projection map to the plus space from \eqref{eq:plusspaceprojection} and have the following result.

\begin{thm} \label{thm:GcAsThetaLift}
Let $N=p$ be an odd prime.
\begin{enumerate}[label={\rm (\roman*)}]
\item The vector space of weight $\frac{1}{2}$ sesquiharmonic Maass forms on $\Gamma_0(4p)$ that are of at most polynomial growth towards all cusps and satisfy the plus space condition is three-dimensional.
\item Let $I_h^{\reg}(\tau,1)$ be as in \cite{brufuim}*{Theorem 4.2} (see Lemma \ref{lem:bfimain2} below). Let $\gamma$ be the Euler--Mascheroni constant. Then, we have
\begin{align*}
\frac{2}{p-1}\Ec(\tau) + \Gc(\tau) = \Cf(p) \theta(\tau) + \frac{\sqrt{p}}{p^2-1}\sum_{h \pmod*{2p}} I_h^{\reg}(4p\tau,1),
\end{align*}
where 
\begin{align} \label{eq:frakCdef}
\Cf(p) \coloneqq  \frac{4p}{\pi\left(p^2-1\right)} \left[\gamma - \log(2) - \frac{\zeta'(2)}{\zeta(2)} - \frac{p\log(p)}{2(p+1)} + \frac{1}{4}\left(\log(\pi)-\gamma\right) \right].
\end{align}
\end{enumerate}
\end{thm}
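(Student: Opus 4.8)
The plan is to handle both statements through the Bruinier--Funke operator $\xi_{\frac12}$ and the exact sequence it induces. A sesquiharmonic Maass form $f$ of weight $\frac12$ is characterized by the requirement that $\xi_{\frac12} f$ be a harmonic Maass form of weight $\frac32$, equivalently that $\xi_{\frac32}\xi_{\frac12} f$ be holomorphic. Restricting to forms of at most polynomial growth satisfying the plus space condition, I obtain a left-exact sequence
\[
0 \longrightarrow K \longrightarrow S \xrightarrow{\ \xi_{\frac12}\ } H,
\]
where $S$ is the space in part (i), the kernel $K$ consists of holomorphic weight $\frac12$ plus forms on $\Gamma_0(4p)$, and $H$ is the space of harmonic weight $\frac32$ plus forms of moderate growth with holomorphic shadow. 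The entire argument reduces to computing $\dim K$, the dimension of the image, and, for part (ii), to matching shadows and then fixing one Fourier coefficient.

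For part (i) I would first note the lower bound $\dim S \geq 3$: the forms $\Ec$, $\Gc$, and $\theta$ all lie in $S$ and are linearly independent. For the upper bound I identify $K$ via Serre--Stark together with Kohnen's plus space theory, which forces the holomorphic weight $\frac12$ plus forms on $\Gamma_0(4p)$ to be spanned by $\theta$, so $\dim K = 1$. It then remains to show $\dim(\xi_{\frac12} S) = 2$. The inequality $\dim(\xi_{\frac12} S) \geq 2$ follows from $\xi_{\frac12}\Ec = \Hc$ together with \eqref{eq:GcShadow}, which exhibit two linearly independent shadows. For the reverse bound I would count the weight $\frac32$ harmonic Eisenstein series on $\Gamma_0(4p)$ of moderate growth subject to the plus space and holomorphic-shadow conditions; attaching these to the cusps of $\Gamma_0(4p)$ and imposing the two constraints should leave exactly the two-dimensional span already found. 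Combining $\dim K = 1$ with $\dim(\xi_{\frac12} S) = 2$ gives $\dim S = 3$.

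For part (ii) I would first verify that the right-hand side lies in $S$. By Lemma \ref{lem:bfimain2} (that is, \cite{brufuim}*{Theorem 4.2}), each $I_h^{\reg}(\tau,1)$ is sesquiharmonic of weight $\frac12$ and of moderate growth; summing over $h \pmod*{2p}$, rescaling $\tau \mapsto 4p\tau$, and adding a multiple of $\theta$ keeps us inside $S$ and enforces the plus space condition, so both sides are elements of $S$. The next step is to match shadows: applying $\xi_{\frac12}$ annihilates the theta contribution, while the Bruinier--Funke--Imamo\={g}lu computation of $\xi_{\frac12} I_h^{\reg}(\tau,1)$, combined with $\xi_{\frac12}\Ec = \Hc$ and \eqref{eq:GcShadow}, should show that the coefficients $\tfrac{2}{p-1}$ and $\tfrac{\sqrt p}{p^2-1}$ are precisely those for which
\[
\xi_{\frac12}\!\left(\tfrac{2}{p-1}\Ec + \Gc - \tfrac{\sqrt p}{p^2-1}\sum_{h \pmod*{2p}} I_h^{\reg}(4p\tau,1)\right) = 0.
\]
By the description of $K$ above, the bracketed form is then a constant multiple of $\theta$.

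It remains to pin down that constant, which is where the content of $\Cf(p)$ lies and where I expect the main obstacle. Since $\theta$ has constant Fourier coefficient $1$ and the constant terms of $\Ec$ and $\Gc$ are explicitly known, the multiple is determined by the zeroth Fourier coefficient of $\sum_{h} I_h^{\reg}(4p\tau,1)$. Extracting this term requires unfolding the regularized Siegel theta integral, carrying out the Borcherds--Harvey--Moore regularization via a truncated fundamental domain and meromorphic continuation, and reading off the finite part. This is the delicate analytic step: it is precisely the interaction of the regularization with the asymptotics of the incomplete $\Gamma$-function that produces the transcendental constants $\gamma$, $\log(2)$, $\zeta'(2)/\zeta(2)$, $\log(p)$, and $\log(\pi)$ in \eqref{eq:frakCdef}. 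Once this constant term is evaluated and compared with the constant terms of $\Ec$ and $\Gc$, the multiple is fixed as $\Cf(p)$, completing the proof.
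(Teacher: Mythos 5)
Your skeleton for part (i) — kernel of $\xi_{\frac{1}{2}}$ plus image — is reasonable, and $\dim K=1$ via Serre--Stark agrees with what the paper uses. The genuine gap is your upper bound $\dim \xi_{\frac{1}{2}}(S)\leq 2$. The image of $\xi_{\frac{1}{2}}$ lies in harmonic weight $\frac{3}{2}$ plus forms of moderate growth, and that ambient space contains the cusp forms $S_{\frac{3}{2}}^+(4p)$, which is nonzero for general $p$; so ``counting harmonic Eisenstein series attached to cusps'' cannot bound the image --- you must rule out cuspidal shadows, and your proposal contains no mechanism for doing so. This is precisely what the paper's Proposition \ref{prop:sesquiharmonicequality} supplies via the Bruinier--Funke pairing: for a form $F$ with moderate growth at all cusps and exponential decay at $i\infty$, the Eisenstein part of $\xi_{\frac{1}{2}}F$ dies (Pei--Wang's $\Hs_{p,p}$ is nonzero at $i\infty$ while $\xi_{\frac{1}{2}}F$ vanishes there), so $\xi_{\frac{1}{2}}F$ is cuspidal, and then $\langle \xi_{\frac{1}{2}}F,\xi_{\frac{1}{2}}F\rangle = \{\xi_{\frac{1}{2}}F,F\} = \sum_{\af}\sum_{n\leq 0} c_{\xi_{\frac{1}{2}}F,\af}(-n)\,c_{F,\af}^+(n) = 0$ because $F$ has no principal parts. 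Without this pairing argument (or an equivalent), part (i) is unproven; with it, the paper gets $\dim = 3$ by noting that constant terms lie in $\operatorname{span}\{1,\log v, v^{\frac{1}{2}}\}$ and that $\theta$, $\Ec$, $\Gc$ realize three independent ones.

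For part (ii) your plan has a circularity problem at the shadow-matching step. To verify that $\xi_{\frac{1}{2}}$ kills the combination, you need $\xi_{\frac{1}{2}}$ of the projected Siegel lift in closed form. What Bruinier--Funke--Imamo\={g}lu actually provide is that $\xi_{\frac{1}{2}}I^{\reg}(\cdot,1)$ is essentially the Millson lift of $1$, i.e.\ a generating series of the \emph{geometric} traces $\sum_{Q}1/\vt{\Gamma_0(p)_Q}$ and degrees of Heegner divisors; comparing this with $\frac{2}{p-1}\xi_{\frac{1}{2}}\Ec + \xi_{\frac{1}{2}}\Gc$, which \eqref{eq:GcShadow} expresses through Pei--Wang's $H_{1,p}$, $H_{p,p}$, requires exactly the class-number identity of Theorem \ref{thm:fouriercomparisonmain} (equivalently the Millson evaluation of Theorem \ref{thm:millsonthetalift}). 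In the paper both of these are \emph{consequences} of Theorem \ref{thm:GcAsThetaLift} (ii), not inputs, so you would need an independent proof that you do not supply. The paper's route avoids shadows entirely: it matches only the non-decaying constant terms at $i\infty$ --- which are already explicit in Lemma \ref{lem:bfimain2} once one inserts Lemma \ref{lem:cuspparameters} and $\mathrm{vol}(\Gamma_0(p)\backslash\Hb) = -\frac{1}{6}(p+1)$, so no fresh regularization analysis is needed, contrary to your description of this as the open ``delicate analytic step'' --- and then invokes Proposition \ref{prop:sesquiharmonicequality}. That proposition also requires moderate growth of the projected lift at the cusps inequivalent to $i\infty$, which you assert without argument; the paper establishes it by projecting the spectral-deformation identity (Corollary \ref{cor:bfiCor6.3} together with Lemma \ref{lem:Eisensteinprojection}) and citing Kubota's bound for the Eisenstein series. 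With those two points supplied, the full identity, including $\Cf(p)$, follows without ever computing a shadow.
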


\begin{rmks}
\
\begin{enumerate}[label={\rm (\arabic*)}]
\item Ahlgren, Andersen and Samart \cite{alansa}*{Section 6} point out a few corrections to \cite{brufuim}*{Theorem 4.2}, which we incorporated in Lemma \ref{lem:bfimain2} below.
\item The functions $\Ec$ and $\theta$ admit representations as Siegel theta lifts as well.
\item In the first part, a basis is given by $\theta$, $\Ec$, and $\Gc$.
\item A corollary is that there are no weight $\frac{1}{2}$ non-holomorphic harmonic Maass forms of moderate growth satisfying the plus-space condition, see Proposition \ref{prop:sesquiharmonicequality}.
\end{enumerate}
\end{rmks}

Theorem \ref{thm:GcAsThetaLift} yields a higher level analog of a result by Bruinier and Funke \cite{brufu06}*{Theorem 7.1} straightforwardly. To this end, let $J^{\reg}(\tau, f)$ be the regularized Kudla--Millson theta lift \cites{kumi, kumi2} of a weight $0$ weak Maass form $f$ to a form of weight $\frac{3}{2}$ (using the same regularization as for $I^{\reg}$, see \cite{brufu06}*{(4.6)}). Analogous to the regularized Siegel theta lift, the notation $J_h^{\reg}(\tau, f)$ refers to the individual components of the vector-valued theta lift $J^{\reg}$. We recall Pei and Wang's generalized Cohen--Eisenstein series $\Hs_{p,p}$ from \eqref{eq:CEdef}, which generates the Eisenstein plus subspace $E_{\frac{3}{2}}^+(4p)$ whenever $p$ is an odd prime (see \cite{peiwang}*{Theorem 1}).
\begin{thm} \label{thm:millsonthetalift}
Let $N=p$ be an odd prime. Then, we have
\begin{align*}
\sum_{h \pmod*{2p}} J_h^{\reg}(4p\tau,1) = \frac{2}{\pi \sqrt{p}} \Hc(\tau) - \frac{1}{\pi \sqrt{p}}\Hs_{p,p}(\tau).
\end{align*}
\end{thm}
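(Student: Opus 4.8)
The plan is to deduce the identity directly from the Siegel-lift formula of Theorem~\ref{thm:GcAsThetaLift} by applying the operator $\xi_{\frac{1}{2}}$, using that $\xi_{\frac{1}{2}}$ intertwines the regularized Siegel lift $I^{\reg}$ (weight $\frac{1}{2}$) with the regularized Kudla--Millson lift $J^{\reg}$ (weight $\frac{3}{2}$). At the level of theta kernels, the Siegel and Kudla--Millson Schwartz functions are linked in the $\tau$-variable by $\xi_{\frac{1}{2}}$ up to an explicit nonzero constant $c$ (carrying a factor of $\pi$), as in the geometric theta-lift calculus of Bruinier--Funke \cite{brufu06} and Bruinier--Funke--Imamo\={g}lu \cite{brufuim}. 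Since both lifts are taken against the same input $f=1$ and the regularization is the standard Borcherds one, this relation passes to the regularized lifts componentwise, giving $\xi_{\frac{1}{2},w} I_h^{\reg}(w,1) = c\, J_h^{\reg}(w,1)$ for every residue $h \pmod*{2p}$.

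Concretely, I would first rewrite Theorem~\ref{thm:GcAsThetaLift} as
\[
\sum_{h \pmod*{2p}} I_h^{\reg}(4p\tau,1) = \frac{p^2-1}{\sqrt{p}}\left(\frac{2}{p-1}\Ec(\tau) + \Gc(\tau) - \Cf(p)\theta(\tau)\right)
\]
and apply $\xi_{\frac{1}{2}}$ to both sides. On the left, the chain rule yields the scaling $\xi_{\frac{1}{2}}[G(4p\tau)] = 2\sqrt{p}\,(\xi_{\frac{1}{2}}G)(4p\tau)$, so in combination with the intertwining relation the left-hand side becomes $2\sqrt{p}\,c \sum_{h \pmod*{2p}} J_h^{\reg}(4p\tau,1)$. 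On the right, $\xi_{\frac{1}{2}}\theta = 0$ because $\theta$ is holomorphic, the term $\xi_{\frac{1}{2}}\Ec$ is the known multiple of $\Hc$ coming from the level-$4$ theory of Duke--Imamo\={g}lu--T\'{o}th \cite{dit11annals} and \cite{bemo}, and $\xi_{\frac{1}{2}}\Gc$ is supplied by the shadow formula \eqref{eq:GcShadow} specialized to $N=p$, where the divisor sum runs over $\ell \in \{1,p\}$ and hence produces $\Hs_{1,p}$ and $\Hs_{p,p}$ alongside the term $\Hs_{1,1}-\Hc$.

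It then remains to collapse the resulting combination of $\Hc$, $\Hs_{1,1}$, $\Hs_{1,p}$, and $\Hs_{p,p}$ into the asserted $\tfrac{2}{\pi\sqrt{p}}\Hc - \tfrac{1}{\pi\sqrt{p}}\Hs_{p,p}$. This is where I expect most of the work to lie: the two mock pieces $\Hs_{1,1}$ and $\Hs_{1,p}$, together with the non-holomorphic part of $\Hc$, must cancel, which I would carry out using the explicit linear relations among these Eisenstein-type series established in \cite{bemo} (in particular the modularity statements behind Theorems~1.1 and~1.2 there) and in Pei--Wang \cite{peiwang}. Matching the global constant at the end simultaneously pins down $c$ and reproduces the factor $\tfrac{1}{\pi\sqrt{p}}$.

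The main obstacle is two-fold: fixing the intertwining constant $c$ precisely (this is what carries the $\pi$ into the answer) and executing the Eisenstein-series bookkeeping of the last step, since a naive coefficient comparison shows that the mock contributions genuinely interact rather than drop out term by term. The compatibility of $\xi_{\frac{1}{2}}$ with the regularized integral (boundary terms from truncating the fundamental domain) must also be verified, but knowing $\sum_{h} I_h^{\reg}(4p\,\cdot\,,1)$ in closed form from Theorem~\ref{thm:GcAsThetaLift} largely sidesteps this. As an independent check, both sides are weight $\frac{3}{2}$ harmonic Maass forms on $\Gamma_0(4p)$ in the plus space of moderate growth, so the equality can be confirmed by matching principal parts and shadows in the relevant finite-dimensional space, in the spirit of the dimension count in Theorem~\ref{thm:GcAsThetaLift}(i).
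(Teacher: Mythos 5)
Your outline is essentially the paper's proof: one applies $\xi_{\frac{1}{2}}$ to Theorem \ref{thm:GcAsThetaLift} (ii), converts $\xi_{\frac{1}{2}} I_h^{\reg}$ into $J_h^{\reg}$ via the kernel relation $\varphi_{\mathrm{KM}} = -\tfrac{1}{\pi}\,\xi_{\frac{1}{2}}\varphi_0\,\dm\mu(z)$ from Remark 5.3 of \cite{brufuim}, inserts $\xi_{\frac{1}{2}}\Ec = -2\Hc$, $\xi_{\frac{1}{2}}\theta = 0$ and the shadow formula \eqref{eq:GcShadow} with $N=p$, and then collapses the resulting combination of $\Hc$, $\Hs_{1,1}$, $\Hs_{1,p}$, $\Hs_{p,p}$ exactly as you propose, using the linear relation in the second assertion of Theorem 1.1 of \cite{bemo}. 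The only point to tighten is that the intertwining constant must be read off from that kernel identity (this is where the $\pi$ enters), rather than being pinned down a posteriori by matching the global constant, which would be circular; with that reading, your plan coincides with the paper's argument.
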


\begin{rmk}
The functions $\Hs_{p,p}$, $\Hs_{1,1}$ and $\Hs_{1,p}$ satisfy a linear relation by \cite{bemo}*{Theorem 1.1}.
\end{rmk}

Next, we would like to relate the Fourier coefficients of the forms appearing in Theorem \ref{thm:GcAsThetaLift} (ii) to each other. However, the square-indexed Fourier coefficients of $\Ec$ were not computed in \cite{dit11annals} as they correspond to poles of certain Kloosterman zeta functions (see \cite{bemo}*{Proposition 3.7}). Nevertheless, they attracted a lot of attention recently \cites{an15, alansa}. In \cite{bemo}*{Section 3}, we obtained an explicit description of all the Fourier coefficients of $\Gc$, although we did not carry out the calculation of the positive square-indexed coefficients. Here, we supplement the computation of the square-indexed Fourier coefficients $\bff(n)$ and $\cf(n)$ of $\Ec$ and $\Gc$ respectively. The full Fourier expansion of both forms can be found in Lemmas \ref{lem:EcExpansion}, \ref{lem:GcExpansion} below. The constant coefficient $\bff(0)$ of $\Ec$ was computed in \cite{alansa}*{(1.7)}.
\begin{thm} \label{thm:cfsquareindices}
Let $N=p$ be an odd prime, $\tf_{N}$ be as in equation \eqref{eq:t(n)def}, and $m \in \N$.
\begin{enumerate}[label={\rm (\roman*)}]
\item We have
\begin{align*}
\bff\left(m^2\right) =
\frac{2}{3\pi} \left(\gamma - 2\frac{\zeta'(2)}{\zeta(2)} - \log(4) + \frac{1}{2}\log(\pi) - \tf_{1}(m)\right).
\end{align*}
\item We have
\begin{align*}
\frac{2}{3}(1-i) \pi \cf(0) = \frac{2}{\pi(p+1)} \left(\gamma - \log(2) - \frac{\zeta'(2)}{\zeta(2)} - \frac{p^2\log(p)}{p^2-1} \right).
\end{align*}
\item We have
\begin{multline*}
\hspace*{\leftmargini} \frac{2}{3}(1-i) \pi \cf\left(m^2\right) = \frac{2}{\pi(p+1)} \left[ \gamma - 2 \frac{\zeta'(2)}{\zeta(2)} +
\log(2) \left(2^{-\nu_2(m)}-3\right) + \right. \\ \left.
\log(p) \left(\frac{1}{p+1} + \frac{p+1}{p-1}p^{-\nu_p(m)} - \frac{2p}{p-1} \right) - \tf_{4p}(m) \right].
\end{multline*}
\end{enumerate}
\end{thm}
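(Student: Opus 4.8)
The plan is to read off all three coefficients from the Laurent expansion at $s=\frac34$ of the Fourier coefficients of the Maass--Eisenstein series $\Fc_{\frac12,4N}^+(\tau,s)$, since by \eqref{eq:Ecdef} and \eqref{eq:Gcdef} the forms $\Ec$ and $\Gc$ are precisely $\frac13\partial_s$, resp.\ $\partial_s$, of $(s-\tfrac34)$ times this series at that point. First I would recall the Fourier expansion set up in \cite{bemo}*{Section~3}: the coefficient of $q^{n}$ factors as an archimedean Whittaker-type factor in $v$ (a ratio of $\Gamma$-values coming from the weight-$\frac12$ Eisenstein integral) times a Dirichlet series $Z_n(s)$ in the modulus built from half-integral-weight Kloosterman/Salié sums. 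For non-square $n$ the series $Z_n(s)$ is holomorphic at $s=\frac34$ and reproduces the coefficients already recorded in \cite{bemo} and in Lemmas~\ref{lem:EcExpansion}, \ref{lem:GcExpansion}; the square indices $n=m^2$ are exactly those for which $Z_{m^2}(s)$ acquires a simple pole at $s=\frac34$ (see \cite{bemo}*{Proposition~3.7}), which is why these coefficients were left open and are singled out here.

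The decisive step is to evaluate $Z_{m^2}(s)$ in closed form near $s=\frac34$. For square indices the Salié sums degenerate to elementary Gauss sums, so that $Z_{m^2}(s)$ factors as a quotient of shifted Riemann zeta values corrected by finitely many Euler factors at the primes dividing $4N$. Concretely, I expect a shape of the form $\zeta(2s-\tfrac12)/\zeta(2s+\tfrac12)$, whose numerator produces the simple pole at $s=\frac34$ (note $2\cdot\tfrac34-\tfrac12=1$) while the denominator stays $\zeta(2)\neq 0$, together with local polynomials at $2$ and at $p$ whose degrees are governed by $\nu_2(m)$ and $\nu_p(m)$. Writing $Z_{m^2}(s)=\frac{A}{s-\frac34}+B_m+O(s-\tfrac34)$, the regularizing operation $\partial_s(s-\tfrac34)(\,\cdot\,)\big|_{s=3/4}$ extracts $B_m$ together with a contribution from the $s$-dependence of the archimedean factor; matching the resulting $v^{\,0}q^{m^2}$ term against the stated normalization $\frac23(1-i)\pi$ (respectively $\frac13$ for $\Ec$) then isolates $\bff(m^2)$ and $\cf(m^2)$.

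Computing the finite part $B_m$ is where all the listed constants are generated. The pole of the zeta-numerator contributes the Euler--Mascheroni constant $\gamma$ and the term $\zeta'(2)/\zeta(2)$ via the standard Laurent data $\zeta(s)=\frac1{s-1}+\gamma+\cdots$ and the logarithmic derivative of $\zeta(2s)$; the archimedean $\Gamma$-factor contributes $\log(\pi)$ and a further $\gamma$ through $\psi(1)=-\gamma$ and $\Gamma(\tfrac12)=\sqrt\pi$; and Taylor-expanding the local Euler factors at $2$ and $p$ about $s=\frac34$ produces the $\log(2)$ and $\log(p)$ terms as well as the arithmetic quantities $2^{-\nu_2(m)}$ and $p^{-\nu_p(m)}$. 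All of the $m$-dependent pieces package precisely into the function $\tf_N$ of \eqref{eq:t(n)def}, with $\tf_1$ appearing for the level-$4$ form $\Ec$ in (i) and $\tf_{4p}$ for the level-$4p$ form $\Gc$ in (iii). Part (ii) is the degenerate ``$m=0$'' case: here one instead expands the constant Fourier coefficient, whose two pieces (the $v^{s-\frac14}$ term and the scattering term carrying a ratio of zetas) again combine to a simple pole at $s=\frac34$, and the same extraction yields $\cf(0)$.

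The main obstacle I anticipate is the simultaneous bookkeeping of the two independent sources of constants. Because one differentiates the product $(s-\tfrac34)\cdot(\text{archimedean factor})\cdot Z_{m^2}(s)$, both the residue $A$ and the finite part $B_m$ feed into the answer through the Leibniz rule, so an error in either the residue normalization or the $s$-derivative of the Whittaker factor contaminates the constant term. Getting the half-integral-weight local computations right---the explicit Gauss sums at $2$ and $p$, the effect of the plus-space projection $\mathrm{pr}^+$ from \eqref{eq:plusspaceprojection}, and the correct Euler factors entering $Z_{m^2}(s)$---is the delicate part; once these are pinned down, assembling $\gamma$, $\log(2)$, $\log(p)$, $\log(\pi)$, and $\zeta'(2)/\zeta(2)$ into the stated closed forms is a lengthy but mechanical Laurent expansion.
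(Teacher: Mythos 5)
Your overall strategy is the paper's strategy: unwind the definitions \eqref{eq:Ecdef}, \eqref{eq:Gcdef}, \eqref{eq:c(n)def}, write the square-indexed coefficient as $(s-\tfrac34)$ times a product of an analytic factor and a zeta-quotient with a simple pole at $s=\tfrac34$, apply $\frac{\partial}{\partial s}\left[(s-\tfrac34)(\cdot)\right]\big\vert_{s=3/4}$ via the Leibniz rule, and package the $m$-dependent derivatives into $\tf_1$ and $\tf_{4p}$ from \eqref{eq:t(n)def}. The one thing the paper does \emph{not} do is re-derive the closed form of the plus-space Kloosterman zeta function at square indices from Sali\'e/Gauss sums: it imports it, from \cite{andu}*{Proposition 5.7} in level $4$, namely
\begin{align*}
\Ks_{\frac{1}{2},4}^+\left(0,m^2;2s\right) = \frac{1+i}{2^{4s-1}}\,\frac{\zeta\left(2s-\frac{1}{2}\right)}{\zeta(4s-1)}\sum_{r\mid m}\mu(r)\,r^{\frac{1}{2}-2s}\,\sigma_{2-4s}\left(\frac{m}{r}\right),
\end{align*}
and, in level $4p$, from \cite{bemo}*{Propositions 3.4, 3.5, Lemma 3.6, (3.10), (3.11)}, where the analogue is a ratio of incomplete $L$-functions $L_{4p}$ times explicit local factors $\ff\left(m^2,s\right)$ at $2$ and $p$.

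That closed form is the entire computational content of the theorem, and it is exactly where your sketch has a genuine gap. You anticipate $Z_{m^2}(s)\sim \zeta(2s-\tfrac12)/\zeta(2s+\tfrac12)$, but the correct denominator is $\zeta(4s-1)$ (resp.\ $L_{4p}(4s-1,\id)$), and this is not cosmetic. Writing the analytic part as $G(s)$, the Leibniz rule yields $G\left(\tfrac34\right)\left[\gamma+\tfrac12(\log G)'\left(\tfrac34\right)\right]$, because $(s-\tfrac34)\zeta(2s-\tfrac12)$ has value $\tfrac12$ and derivative $\gamma$ at $s=\tfrac34$; with $1/\zeta(4s-1)$ inside $G$ one picks up $\tfrac12\cdot\left(-4\tfrac{\zeta'(2)}{\zeta(2)}\right)=-2\tfrac{\zeta'(2)}{\zeta(2)}$, as in parts (i) and (iii), whereas your denominator would give $-\tfrac{\zeta'(2)}{\zeta(2)}$, off by a factor of $2$. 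The same sensitivity is what separates (ii) from (iii): for $\cf(0)$ the numerator is $L_{4p}(4s-2,\id)$, so the residue weight is $\tfrac14$ rather than $\tfrac12$, producing the coefficient $-1$ on $\zeta'(2)/\zeta(2)$ in (ii) against $-2$ in (iii); a single uniform guess for the zeta-quotient cannot reproduce both. (Two smaller imprecisions: $\cf(n)$ is defined in \eqref{eq:c(n)def} from the Kloosterman zeta function alone, with no archimedean prefactor, which is why no $\log(\pi)$ appears in (ii) and (iii), while $\bff(m^2)$ does carry the prefactor from Lemma \ref{lem:Eisensteinexpansions} (iii) and hence the $\tfrac12\log(\pi)$; and $\cf(0)$ comes only from the scattering term, the $v^{s-\frac14}$ piece of the constant coefficient feeding the $\tfrac23 v^{1/2}$ term of $\Gc$ instead.) Once the correct closed forms are inserted, the rest of your plan --- Laurent data of the zeta numerator, $\psi(1)=-\gamma$, Taylor expansion of the local Euler factors at $2$ and $p$, identification with $\tf_N$ --- is exactly the paper's proof.
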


\begin{rmk}
Let $p$ be an odd prime. The coefficient $\cf(0)$ appears in the Kronecker limit formula for the weight $0$ and level $p$ non-holomorphic Eisenstein series $\Fc_{0,p}(\tau,s)$ from equation \eqref{eq:Fcdef}. This goes back to a result of Goldstein \cite{goldstein}*{Theorems 3-1 and 3-3}, which Vassileva \cite{vass}*{Theorem 3.3.1} specialized to the case of $\Gamma_0(p)$. Explicitly, we have
\begin{align*}
\lim_{s \to 1} \left(\Fc_{0,p}(\tau,s) - \frac{\frac{3}{\pi(p+1)}}{s-1}\right) =  2(1-i)\pi\cf(0) - \frac{6}{\pi (p+1)}\log\left(\sqrt{v} \vt{\frac{\eta(p\tau)^p}{\eta(\tau)}}^{\frac{2}{p-1}} \right),
\end{align*}
where $\eta$ is the Dedekind $\eta$-function. The function on the right hand side is a weight $0$ and level $p$ sesquiharmonic Maass form.
\end{rmk}

Equipped with explicit formulas for all the Fourier coefficients of $\Ec$ and $\Gc$, we relate them to the Fourier coefficients of the theta lift in Theorem \ref{thm:GcAsThetaLift} (ii). From the various non-constant parts of the involved Fourier expansions, we obtain the following results.
\begin{thm} \label{thm:fouriercomparisonmain}
Let $H_{\ell,p}(n)$ be Pei and Wang's aforementioned generalized Hurwitz class numbers (see equations \eqref{eq:HellNdef}, \eqref{eq:HNNdef}), and $\mathcal{Q}_{p,n}$ be as in equation \eqref{eq:Quadraticformssets} below. If $n < 0$ with $n \equiv 0,1 \pmod*{4}$, then we have
\begin{align*}
\hspace*{\leftmargini} \sum_{Q \in \mathcal{Q}_{p,n} \slash \Gamma_0(p)} \frac{1}{\vt{\Gamma_0(p)_Q}} = \frac{4(p+1)}{p} H_{1,p}(-n) - \frac{2(p+1)}{p-1} H_{p,p}(-n).
\end{align*}
\end{thm}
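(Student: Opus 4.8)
The plan is to compare the $q^n$-Fourier coefficients on both sides of the identity in Theorem \ref{thm:GcAsThetaLift} (ii) at a fixed index $n < 0$ with $n \equiv 0,1 \pmod*{4}$. Because $\theta(\tau)=\sum_{r\in\Z}q^{r^2}$ is supported on non-negative squares, the term $\Cf(p)\theta$ contributes nothing for such $n$, which is precisely why the negative-index range avoids the constant- and square-index phenomena of Theorem \ref{thm:cfsquareindices}. Thus the only term surviving on the right-hand side is $\frac{\sqrt{p}}{p^2-1}\sum_{h\pmod*{2p}}I_h^{\reg}(4p\tau,1)$. Reading its $n$-th coefficient off the explicit expansion in Lemma \ref{lem:bfimain2}, and tracking the effect of the substitution $\tau\mapsto 4p\tau$, the summation over $h\pmod*{2p}$, and the plus-space projection \eqref{eq:plusspaceprojection}, I would identify this coefficient with the weighted CM count $\sum_{Q\in\mathcal{Q}_{p,n}\slash\Gamma_0(p)}\frac{1}{\vt{\Gamma_0(p)_Q}}$ of discriminant $n$, i.e.\ the left-hand side of the assertion.

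On the other side I would extract the $n$-th coefficient of $\frac{2}{p-1}\Ec+\Gc$ from the expansions in Lemmas \ref{lem:EcExpansion} and \ref{lem:GcExpansion}. The part of these coefficients carrying the arithmetic (CM) data is governed by the shadows: $\Ec$ is a $\xi_{\frac{1}{2}}$-preimage of $\Hc$, and the shadow of $\Gc$ is the combination of $\Hs_{1,1}-\Hc$, $\Hs_{1,p}$ and $\Hs_{p,p}$ recorded in \eqref{eq:GcShadow}. Since the positive-index holomorphic coefficients of $\Hs_{1,1}$ and $\Hc$ coincide and hence cancel in $\Hs_{1,1}-\Hc$, equating the two sides expresses the geometric trace as an explicit linear combination of $H_{1,1}(-n)$, $H_{1,p}(-n)$, and $H_{p,p}(-n)$.

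It then remains to reach the stated shape, which involves only $H_{1,p}(-n)$ and $H_{p,p}(-n)$. Here I would invoke the linear relation among $\Hs_{1,1}$, $\Hs_{1,p}$, and $\Hs_{p,p}$ from \cite{bemo}*{Theorem 1.1} (see the Remark after Theorem \ref{thm:millsonthetalift}) to eliminate $H_{1,1}(-n)$; after substituting and collecting terms the coefficients should collapse to $\frac{4(p+1)}{p}$ and $-\frac{2(p+1)}{p-1}$. This step also explains the superficial discrepancy with Theorem \ref{thm:millsonthetalift}, whose positive-index coefficients give the same geometric trace in terms of $H_{1,1}$ and $H_{p,p}$ instead.

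I expect the main difficulty to be the normalization bookkeeping in the first step: passing from the vector-valued components $I_h^{\reg}(\tau,1)$, whose CM coefficients are indexed by discriminants of the underlying lattice, to the scalar level-$p$ class number after the rescaling $\tau\mapsto 4p\tau$, the sum over $h\pmod*{2p}$, and the plus-space projection, and pinning down the resulting constant so that it meshes with the prefactor $\frac{\sqrt{p}}{p^2-1}$. A secondary point is to isolate, among the several incomplete-$\Gamma$ and constant terms of the three Fourier expansions, exactly the pieces that carry the CM traces; this is legitimized by the fact that the identity in Theorem \ref{thm:GcAsThetaLift} (ii) is an exact equality, so the remaining (non-arithmetic) parts are guaranteed to match automatically.
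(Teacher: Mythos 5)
Your proposal is correct and is essentially the paper's own proof: the paper likewise compares the negative-index coefficients in Theorem \ref{thm:GcAsThetaLift} (ii), notes that $\theta$ drops out, identifies the coefficient of the projected lift $\sum_{h} I_h^{\reg}(4p\tau,1)$ with $\sum_{h}\deg Z\left(\tfrac{n}{4p},h\right)=\sum_{Q \in \mathcal{Q}_{p,n}\slash\Gamma_0(p)}\vt{\Gamma_0(p)_Q}^{-1}$ via \eqref{eq:quadraticformsidentification} and \eqref{eq:trimagdef}, cancels the common factor $\erfc\left(2\sqrt{\pi\vt{n}v}\right)=\pi^{-\frac{1}{2}}\Gamma\left(\tfrac{1}{2},4\pi\vt{n}v\right)$ on both sides, and finishes exactly as you propose by eliminating $H_{1,1}(-n)$ with the second assertion of \cite{bemo}*{Theorem 1.1}. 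The only divergence is how the coefficients $\cf(n)$, $n<0$, are converted into class numbers: the paper quotes the explicit evaluation \eqref{eq:negativecoefficients} of $\Ks_{\frac{1}{2},4p}^+\left(0,n;\tfrac{3}{2}\right)$ from \cite{bemo}*{p.\ 28}, whereas you would re-derive the same data from the shadow identities $\xi_{\frac{1}{2}}\Ec=-2\Hc$ and \eqref{eq:GcShadow}; these inputs are equivalent (the former is precisely the coefficient-level form of the latter), though your route must additionally track the antilinearity of $\xi_{\frac{1}{2}}$ — the negative-index coefficients enter the shadow complex-conjugated, which is the conjugation step the paper performs when invoking \eqref{eq:negativecoefficients}.
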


\begin{rmks}
\
\begin{enumerate}[label={\rm (\arabic*)}]
\item We note that the left hand side is the higher level analog of the definition of the classical Hurwitz class numbers in level one. 
\item The traces of positive non-square discriminant evaluate in terms of the fundamental unit and the class number associated to that discriminant as well as local factors pertaining to the given level, see Theorem \ref{thm:fouriercomparisondetail}.
\item The traces of positive square discriminant evaluate in terms of $\tf_{1}$ and $\tf_{4p}$ defined in equation \eqref{eq:t(n)def} among other explicit terms, see Theorem \ref{thm:fouriercomparisondetail}.
\end{enumerate}
\end{rmks}

In addition, Bruinier, Funke and Imamo\={g}lu \cite{brufuim} discuss a second regularization of their Siegel theta lift by employing so-called spectral deformations of the input function, and compare this to the regularization by truncating a fundamental domain. The upshot is that the two regularizations of the theta lift of $1$ differ by a multiple of $\theta$ in our case (see \cite{brufuim}*{Theorem 1.3}), which leads to the following result.
\begin{thm} \label{thm:specdefcor}
Let $N=p$ be an odd prime, let
\begin{align*}
\zeta^*(s) \coloneqq \frac{\Gamma\left(\frac{s}{2}\right)\zeta(s)}{\pi^{\frac{s}{2}}},
\end{align*}
and let $\Cf(p)$ be the constant from \eqref{eq:frakCdef}. Then, we have
\begin{multline*}
\frac{2}{p-1}\Ec(\tau) + \Gc(\tau) = \frac{\pi p}{2(p-1)} \CT_{s=1} \left[\frac{4^{\frac{s}{2}}\zeta^*(s)}{\zeta^*(2s-1)} \Fc_{\frac{1}{2},4p}^+\left(\tau,\frac{s}{2}+\frac{1}{4}\right) \right] \\
+\left[\Cf(p) - \pi^2 p (p+1)\left(\gamma+\log(\pi)-2\frac{\zeta'(2)}{\zeta(2)}\right)\right]  \theta(\tau).
\end{multline*}
\end{thm}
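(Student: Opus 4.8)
The plan is to follow the narrative preceding the statement: realize the left-hand side through the \emph{spectral} regularization of the Siegel theta lift of $1$ and compare it, via \cite{brufuim}*{Theorem 1.3}, with the truncation regularization already evaluated in Theorem~\ref{thm:GcAsThetaLift}~(ii). Concretely, Theorem~\ref{thm:GcAsThetaLift}~(ii) gives
\begin{align*}
\frac{2}{p-1}\Ec(\tau) + \Gc(\tau) = \Cf(p)\,\theta(\tau) + \frac{\sqrt{p}}{p^2-1}\sum_{h \pmod*{2p}} I_h^{\reg}(4p\tau,1),
\end{align*}
so it remains to rewrite $\sum_h I_h^{\reg}(4p\tau,1)$ through the spectral deformation and to collect the resulting multiples of $\theta$.

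First I would describe the spectral deformation. The constant input $1$ is deformed to the weight~$0$ Eisenstein series $\Fc_{0,p}(\cdot,s)$ of eigenvalue $s(1-s)$, whose residue at $s=1$ is the constant $\frac{3}{\pi(p+1)}$ recorded in the Kronecker limit formula in the remark after Theorem~\ref{thm:cfsquareindices}. For $\re(s)$ large the Siegel theta integral against $\Fc_{0,p}(\cdot,s)$ converges; unfolding against the theta kernel produces a weight~$\frac12$ Eisenstein series, the theta correspondence halving and shifting the spectral parameter (hence the argument $\tfrac{s}{2}+\tfrac14$), while the completed factor $\tfrac{4^{s/2}\zeta^*(s)}{\zeta^*(2s-1)}$ emerges from the $\Gamma$-factor of the Mellin transform together with the constant term of $\Fc_{0,p}(\cdot,s)$ and the normalization forced by $\tau \mapsto 4p\tau$. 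After projecting to the plus space via \eqref{eq:plusspaceprojection} and summing over $h \pmod*{2p}$, the spectrally regularized lift is, in our normalization,
\begin{align*}
\frac{\pi p}{2(p-1)}\,\CT_{s=1}\left[\frac{4^{s/2}\zeta^*(s)}{\zeta^*(2s-1)}\,\Fc_{\frac12,4p}^+\left(\tau,\frac{s}{2}+\frac14\right)\right],
\end{align*}
the constant term at $s=1$ being exactly the prescription of the spectral regularization.

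The heart of the matter is the extraction of the constant, which I would carry out via the pole structure at $s=1$. The zeta ratio $g(s)\coloneqq \tfrac{4^{s/2}\zeta^*(s)}{\zeta^*(2s-1)}$ is holomorphic at $s=1$ with $g(1)=4$, since $\zeta^*(s)$ and $\zeta^*(2s-1)$ both have simple poles there with matching residues; by contrast $\Fc_{\frac12,4p}^+(\tau,\tfrac{s}{2}+\tfrac14)$ has a simple pole at $s=1$ (coming from the spectral point $\tfrac34$) whose residue is a multiple of $\theta$, this being a residual weight~$\frac12$ form. Writing the Laurent expansion of the product accordingly, $\CT_{s=1}[g(s)\Fc_{\frac12,4p}^+]$ splits into $g(1)$ times the holomorphic part of $\Fc_{\frac12,4p}^+$ at $s=1$, which reproduces the combination $\frac{2}{p-1}\Ec+\Gc$ through the derivatives defining these forms, plus $g'(1)$ times the $\theta$-valued residue. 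The Laurent coefficients of $\zeta^*$ at $s=1$ supply the constants $\gamma$, $\log\pi$, $\log2$, and $\tfrac{\zeta'(2)}{\zeta(2)}$ inside $g'(1)$. Substituting the two displays above into the formula from Theorem~\ref{thm:GcAsThetaLift}~(ii) and invoking \cite{brufuim}*{Theorem 1.3} --- with the corrections of \cite{alansa}*{Section 6} incorporated as in Lemma~\ref{lem:bfimain2} --- I would gather every $\tau$-independent contribution, namely $\Cf(p)\theta$, the $g'(1)$-term, and the constant from the comparison of the two regularizations, into the single coefficient $\Cf(p)-\pi^2 p(p+1)\big(\gamma+\log(\pi)-2\tfrac{\zeta'(2)}{\zeta(2)}\big)$.

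The main obstacle will be this last step of constant bookkeeping rather than any new analytic input. One must compute $g'(1)$ exactly from the Laurent expansions of $\zeta^*$, determine the $\theta$-coefficient of the residue of $\Fc_{\frac12,4p}^+$ at the spectral point, and track how the scaling $\tau\mapsto 4p\tau$, the plus-space projection \eqref{eq:plusspaceprojection}, and the sum over $h \pmod*{2p}$ normalize the comparison constant of \cite{brufuim}*{Theorem 1.3}. The unfolding, meromorphic continuation, and plus-space projection are standard; verifying that all these constants collapse to the asserted expression, consistently with the Ahlgren--Andersen--Samart corrections, is the delicate part.
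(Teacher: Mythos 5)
Your skeleton --- start from Theorem \ref{thm:GcAsThetaLift} (ii), replace the input $1$ by the spectral deformation built from $\Fc_{0,p}(\cdot,s)/\zeta^*(2s-1)$, pass through Lemma \ref{lem:BFIThm6.2} and the plus-space projection of Lemma \ref{lem:Eisensteinprojection}, and compare the two regularizations --- is indeed the paper's. The fatal problem is your mechanism for extracting the constants. You propose to open the $\CT$-bracket by Laurent expansion: with $g(s)\coloneqq 4^{s/2}\zeta^*(s)/\zeta^*(2s-1)$, which is (as you correctly note) holomorphic at $s=1$ with $g(1)=4$, and with the simple pole of $\Fc_{\frac12,4p}^+\left(\tau,\tfrac s2+\tfrac14\right)$ at $s=1$, one gets
\begin{align*}
\CT_{s=1}\left[g(s)\,\Fc_{\frac12,4p}^+\left(\tau,\tfrac s2+\tfrac14\right)\right]
= g(1)\,\CT_{s=\frac34}\Fc_{\frac12,4p}^+(\tau,s)
+ 2\,g'(1)\,\mathrm{Res}_{s=\frac34}\Fc_{\frac12,4p}^+(\tau,s),
\end{align*}
and you claim the first summand ``reproduces the combination $\frac{2}{p-1}\Ec+\Gc$ through the derivatives defining these forms''. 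It cannot: by the definition \eqref{eq:Gcdef}, the constant Laurent coefficient $\CT_{s=3/4}\Fc_{\frac12,4p}^+(\tau,s)$ \emph{is} $\Gc(\tau)$, a level $4p$ object, so the first summand is $4\Gc$, while the residue is a multiple of $\theta$. Since $\theta$, $\Ec$, $\Gc$ are linearly independent (Theorem \ref{thm:GcAsThetaLift} (i) and the remark following it), $4\Gc$ plus any multiple of $\theta$ can never equal $\frac{2}{p-1}\Ec+\Gc$ plus a multiple of $\theta$; the level $4$ form $\Ec$ simply cannot be produced this way. Your route therefore dead-ends precisely at the step you dismiss as bookkeeping. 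Note that the paper never expands this bracket at all: $\CT_{s=1}[\cdots]$ is carried intact into the final statement, and $g'(1)$ plays no role anywhere in the proof.

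What must actually be computed is the defect between the spectral and truncation regularizations, namely the terms $\frac{\partial}{\partial s}B_{\af}(s)\big\vert_{s=1}\cdot\tilde\Theta_{K_{\af}}$ of Corollary \ref{cor:bfiCor6.3}, where $B_{\af}(s)$ is the coefficient of $v^{1-s}$ in the expansion of the deformation $f_s$ at the cusp $\af\in\{0,i\infty\}$ of $\Gamma_0(p)$, see \eqref{eq:Bafdef}. Concretely, the paper: shows $\lambda_{0}=\lambda_{\infty}=1$ for the plus-space projections of $\tilde\Theta_{K_{\af}}$ via the coefficients $b_{\af}(0,h)$; evaluates $B_{\infty}$ from Lemma \ref{lem:Eisensteinexpansions} (i) together with $\Ks_{0,p}(0,0;2s)=\frac{\zeta(2s-1)}{\zeta(2s)}\frac{p-1}{p^{2s}-1}$ (Lemma \ref{lem:levelreduction}); evaluates $B_{0}$ from the expansion at the cusp $0$ obtained through the Fricke involution, Lemma \ref{lem:Eisensteinexpansions} (ii), together with $\widetilde{\Ks}_{0,p}(0,0;2s)=\frac{\zeta(2s-1)}{\zeta(2s)}\frac{p^{2s}-p}{p^{2s}-1}$; and then observes that the $\mp 2p^2\log(p)$ contributions of the two cusps cancel in $B_{\infty}'(1)+B_{0}'(1)$, leaving exactly the coefficient $-\pi^2p(p+1)\left(\gamma+\log(\pi)-2\frac{\zeta'(2)}{\zeta(2)}\right)$ of $\theta$ after normalizing by $\frac{p}{p^2-1}$ and combining with $\Cf(p)$ from Theorem \ref{thm:GcAsThetaLift} (ii). None of this --- the second-cusp expansion, the two Kloosterman zeta evaluations, the values $b_{\af}(0,h)$, the $\log(p)$ cancellation --- appears in your proposal, and it is not recoverable from the Laurent data of $g$ alone, which is where you locate all of the remaining work.
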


\begin{rmk}
Theorem \ref{thm:specdefcor} is a higher level analog of \cite{brufuim}*{Corollary 6.3} combined with our result from Theorem \ref{thm:GcAsThetaLift} (ii).
\end{rmk}

\subsection{Outline of the paper}
The paper is organized as follows. We summarize the framework of this paper in Section \ref{sec:prelim}. Section \ref{sec:ProofOfThm1.3} is devoted to the explicit computation of the square-indexed Fourier coefficients of the forms $\Ec$ and $\Gc$, which proves Theorem \ref{thm:cfsquareindices}. In particular, the computation of the constant terms of $\Gc$ enables us to prove Theorem \ref{thm:GcAsThetaLift} in Section \ref{sec:ProofOfThm1.1}. The content of Section \ref{sec:ProofOfThm1.2} is the proof of Theorem \ref{thm:millsonthetalift} by evaluating the Millson theta lift of $1$. Section \ref{sec:ProofOfThm1.4} deals with the explicit relations between the Fourier coefficients of $\Ec$, $\Gc$, $\theta$ and the Siegel theta lift of $1$ projected to the plus space, which proves Theorem \ref{thm:fouriercomparisonmain}. Lastly, we utilize spectral deformations of $1$ to prove Theorem \ref{thm:specdefcor} in Section \ref{sec:ProofOfThm1.5}.

\section*{Notation}

\subsection*{General modular forms notation}
\begin{itemize}
\item $\tau = u+iv \in \Hb$, $z = x+iy \in \Hb$, $q = e^{2\pi i \tau}$, $\dm\mu(z) \coloneqq \frac{\dm x \dm y}{y^2}$,
\item $p$ is an odd prime, 
\item $\ell$ is a divisor of $N \in \N$, where $N$ is odd and square-free in the context of $\Hs_{\ell,N}$,
\item $n = tm^2$ is a Fourier coefficient with $t$ a fundamental discriminant,
\item $\gamma$ is either the Euler--Mascheroni constant or a $2 \times 2$ matrix,
\item $\Gamma(s,z)$ is the (analytic continuation of the) principal branch of the incomplete $\Gamma$-function (as a function of $s$), $\Gamma(s) = \Gamma(s,0)$ is the $\Gamma$-function,
\item $\af \in \Q \cup \{i\infty\}$ is a cusp with scaling matrix $\sigma_{\af} \in \slz$,
\item $\Gamma_0(N) = \left\{\left(\begin{smallmatrix} a & b \\ c & d \end{smallmatrix}\right) \in \slz \colon c \equiv 0 \pmod*{N} \right\}$ is the Hecke congruence subgroup of $\slz$,
\item $(\Gamma_0(N))_{\infty} = \left\{\pm \left(\begin{smallmatrix} 1 & n \\ 0 & 1 \end{smallmatrix}\right) \colon n \in \Z \right\} \leq \Gamma_0(N)$ is the stabilizer of the cusp $i\infty$,
\item $\nu_p(n) \in \N_0 \cup \{\infty\}$ is the $p$-adic valuation of $n$,
\item $\left(\frac{a}{b}\right)$ is the Kronecker symbol,
\item $\chi_{d}(n) = \left(\frac{d}{n}\right)$ is a Dirichlet character,
\item $\id = \chi_{1}$ is the principal character of modulus $1$,
\item $\varepsilon_d$ refers to a quantity in the theta multiplier,
\item $\widetilde{\varepsilon}_t$ refers to the fundamental unit of $\Q\left(\sqrt{t}\right)$,
\item $\widehat{\varepsilon}_{\af}$ is a parameter associated to a cusp $\af$, see the paragraph preceeding Lemma \ref{lem:cuspparameters},
\item $L(\chi,s)$ is the (complete) Dirichlet $L$-function, $\zeta(s)$ is the Riemann zeta function, $\zeta^*(s)$ is the completed Riemann $\zeta$-function,
\item $L_N(\chi,s)$ the incomplete Dirichlet $L$-function defined in equation \eqref{eq:L_Ndef},
\item $\mu(n)$ the M{\"o}bius function,
\item $\sigma_{\ell,N,s}(n)$, $\ell \mid N$, and $\sigma_{N,s}(n)$ are certain modifications of the standard sum of powers of divisors function $\sigma_s$, see equation \eqref{eq:divisorsums}
\item $M_{k}(N)$ is the $\C$-vector space of holomorphic modular forms of weight $k$ for $\Gamma_0(N)$,
\item $M_{k}^+(N) \subseteq M_{k}(N)$ is Kohnen's plus subspace inside $M_{k}(N)$,
\item $S_{k}^+(N) \subseteq M_{k}^+(N)$ is the subspace of cusp forms inside $M_{k}^+(N)$,
\item $E_{k}^+(N) \subseteq M_{k}^+(N)$ is the Eisenstein subspace inside $M_{k}^+(N)$,
\item $W_{\mu,\nu}(y)$ is the usual $W$-Whittaker function,
\item $\xi_k = 2iv^k\overline{\frac{\partial}{\partial\overline{\tau}}}$ is the Bruinier--Funke differential operator,
\item $\Delta_k = -\xi_{2-k}\xi_k$ is the hyperbolic Laplace operator, see equation \eqref{eq:Deltadef},
\item $\cdot \vert_{k}\cdot$ resp.\ $\cdot \vert_{k, L}\cdot$ is the Petersson slash operator, see equations \eqref{eq:PeterssonSVdef} and \eqref{eq:PeterssonVVdef},
\item $\cdot\vert\mathrm{pr}^+$ is the projection operator into Kohnen's plus space, see \cite{thebook}*{Proposition 6.7} for example,
\end{itemize}

\subsection*{Quadratic traces and class numbers}
\begin{itemize}
\item $\mathcal{Q}_{N,n}$ and $\mathcal{Q}_{N,n,h}$ denote certain sets of integral binary quadratic forms of discriminant $n$, see equation \eqref{eq:Quadraticformssets},
\item $h(n)$ denotes the class number in the wide sense, 
\item $h^*(n)$ denotes the modified class number, see equation \eqref{eq:h*def},
\item $H_{\ell,N}(n)$, $\ell \mid N$, are the generalized Hurwitz class numbers, see equations \eqref{eq:HellNdef}, \eqref{eq:HNNdef},
\item $H(n) = H_{1,1}(n)$ are the classical Hurwitz class numbers,
\item $\tau_Q \in \Hb$ denotes the Heegner point associated to the integral binary quadratic form $Q$,
\item $S_Q \subseteq \Hb$ denotes the geodesic associated to the integral binary quadratic form $Q$,
\item $\tr_{m,h}(f)$ denotes the imaginary quadratic ($m < 0$) resp.\ real quadratic ($m>0$) trace of the function $f$, see equations \ref{eq:trimagdef} and \ref{eq:trrealdef} including the regularization described in \cite{brufuim}*{Subsection 3.4} whenever $m=\square$,
\item $Z(m,h)$ refers to the Heegner divisor of index $(m,h)$, see \cite{brufuim}*{p.\ 55} for more details,
\item $\mathrm{vol}(M)$ is the hyperbolic volume of $M \subseteq \Hb$ normalized by $-\frac{1}{2\pi}$, see equation \eqref{eq:BFIvol},
\end{itemize}

\subsection*{Scalar-valued forms}
\begin{itemize}
\item $\Fc_{k,4N}(\tau,s)$ is the weight $k \in -\frac{1}{2}\N+1$ and level $4N$ Maass--Eisenstein series with spectral parameter $s \in \C$ satisfying $\re(s) > 1-\frac{k}{2}$, see equation \eqref{eq:Fcdef}
\item $\Fc_{k,4N}^+(\tau,s)$ is the projection of $\Fc_{k,4N}(\tau,s)$ to Kohnen's plus space, see equation \eqref{eq:Gcdef}
\item $\Ec(\tau)$ is Duke, Imamo\={g}lu and T\'{o}th's sesquiharmonic Maass form of weight $\frac{1}{2}$ and level $4$, see equation \eqref{eq:Ecdef},
\item $\Gc(\tau)$ is the higher level analog of $\Ec$, see equation \eqref{eq:Gcdef},
\item $\Hs_{\ell,N}(\tau)$, where $\ell \mid N$, are the generating functions of $H_{\ell,N}(n)$; if $N > 1$ is odd and square-free and $1 < \ell \mid N$ then $\Hs_{\ell,N} \in E_{\frac{3}{2}}^+(4N)$,
\item $\Hc(\tau)$ is Zagier's non-holomorphic Eisenstein series, see equation \eqref{eq:Hcdef},
\item $\theta(\tau) = \sum_{n \in \Z} q^{n^2} \in M_\frac{1}{2}(4)$ is the classical Jacobi theta function, see equation \eqref{eq:thetadef},
\end{itemize}

\subsection*{Vector-valued notation and theta lifts}
\begin{itemize}
\item $V$ is the three-dimensional space of traceless $2\times2$ matrices with rational entries,
\item $L \subseteq V$ is the lattice associated to $\Gamma_0(N)$, see equation \eqref{eq:latticedef}, its dual lattice $L'$ is given by equation \eqref{eq:latticedualdef},
\item $\mathrm{Mp}_2(\Z)$ is the metaplectic double cover of $\slz$, see equation \eqref{eq:Mp2Zdef},
\item $\rho_{L}$ is the Weil representation associated to $L$, see equation \eqref{eq:Weildef},
\item $\ef_{h}$ with $h \in L'\slash L$ is the standard basis of the group ring $\C[L'\slash L]$,
\item $M_{k, L}$ is the space of holomorphic vector-valued modular forms,
\item $\Iso(V)$ is the set of isotropic lines $\lf$ in $V$ with a certain parameters $\alpha_{\lf}$, $\beta_{\lf}$, and $k_{\lf}$ see the paragraph before Lemma \ref{lem:cuspparameters} as well as Lemma \ref{lem:bfimain2},
\item $K_{\lf}$ is an even lattice associated to an isotropic line $\lf \in \Iso(V)$, see equation \eqref{eq:unarylattice},
\item $\varphi_0(X,\tau,z)$ denotes the standard Gaussian as in \cite{brufuim}*{Subsection 5.1},
\item $\Theta_L(\tau,z,\varphi)$ denotes the theta kernel arising from the function $\varphi$. If $\varphi = \varphi_0$ then this is the Siegel theta kernel. If $\varphi = \varphi_{\text{KM}}$ (see \cite{kumi}) then this is the Millson theta kernel. The component functions of $\Theta_L(\tau,z,\varphi)$ are denoted by $\theta_{h}(\tau,z,\varphi)$, see equations \eqref{eq:thetacompdef} and \eqref{eq:Thetakerneldef},
\item $\tilde\Theta_{K_\af}(\tau)$ is as in equation \eqref{eq:Thetatildedef},
\item $P_{0,0}(\tau,s)$ denotes the vector-valued weight $\frac{1}{2}$ Maass--Eisenstein series (see equation \eqref{eq:vvEisDef}) with component functions $P_{0,0}^{(h)}$,
\item $\CT_{s=\delta}(f)$ denotes the constant term in the Laurent expansion of $f$ as a function of $s$ about $\delta$,
\item $\mathbb{F}_T(N)$ refers to a fundamental domain for $\Gamma_0(N)$ truncated at height $T > 0$, see equation \eqref{eq:funddomtrunc}
\item $I^{\reg}(\tau,f)$ refers to the regularized Siegel theta lift from weight $0$ to weight $\frac{1}{2}$ as in Bruinier, Funke and Imamo\={g}lu \cite{brufuim}. Its components are denoted by $I_h^{\reg}(\tau,f)$, see equations \eqref{eq:ThetaLiftDefn} and \eqref{eq:ThetaLiftComponents},
\item $J^{\reg}(\tau,f)$ refers to the regularized Millson theta lift from weight $0$ to weight $\frac{3}{2}$. Its components are denoted by $J_h^{\reg}(\tau,f)$, see equation \eqref{eq:ThetaLiftMillsonDef},
\end{itemize}

\subsection*{Special notation}
\begin{itemize}
\item $\Ks_{0,N}(m,n;s)$ is the usual Kloosterman zeta function associated to the cusp $i\infty$ and with no multiplier defined in equation \eqref{eq:Ks0Ndef},
\item $\widetilde{\Ks}_{0,N}(m,n;s)$ is the modified Kloosterman zeta function associated to the cusp $0$ and with no multiplier defined in equation \eqref{eq:Ks0Ntildedef},
\item $\Ks_{k,4N}^+(m,n;s)$ is the plus space Kloosterman zeta function associated to the cusp $i\infty$ and with the theta multiplier defined in equation \eqref{eq:Ks4Nplusdef},
\item $\alpha(y)$ is Duke, Imamo\={g}lu and T\'{o}th's special function, which we renormalize as in equation \eqref{eq:alphaspecialdef}, 
\item $\Fc(t)$ is Bruinier, Funke and Imamo\={g}lu's special function, which we correct as suggested in \cite{alansa}*{Section 6} and which coincides with $\alpha$ essentially, see equation \eqref{eq:FcSpecialdef} and Lemma \ref{lem:specialfunctionrelation},
\item $\psi(z) = \frac{\Gamma'(z)}{\Gamma(z)}$ is the Euler Digamma function with $\psi(0) \coloneqq - \gamma$,
\item $T_{N,s}^{\chi}$ is a certain divisor-type sum arising from the square part of the Fourier index of $\Fc_{\frac{1}{2},4N}^+$, see equation \eqref{eq:TNschidef},
\item $\tf_{N}$ is the $s$-derivative of $T_{N,s}^{\chi}$ at $s=\frac{3}{4}$ (up to renormalization), see equation \eqref{eq:t(n)def},
\item $\bff(n)$ are certain Fourier coefficients of $\Ec$ determined in Theorem \ref{thm:cfsquareindices} (i),
\item $\cf(n)$ are certain Fourier coefficients of $\Gc$ defined in equation \eqref{eq:c(n)def},
\item $\Cf(p)$ is a certain constant introduced in equation \eqref{eq:frakCdef},
\item $f_s(\tau)$ is the spectral deformation of the constant function $1$ defined in Lemma \ref{lem:spectraldeformations},
\item $B_{\af}(s)$ refers to the multiple of $v^{1-s}$ in the Fourier expansion of $f_s$ about the cusp $\af$, see equation \eqref{eq:Bafdef},
\item $\widetilde{A(2,n)}$ is a local factor, which is a renormalized version of the one in \cite{bemo}*{Proposition 3.5}, see Theorem \ref{thm:fouriercomparisondetail},
\item $A(p,n)$ is the local factor from \cite{bemo}*{Proposition 3.5} (without any modifications), see Theorem \ref{thm:fouriercomparisondetail} as well.
\end{itemize}

\section{Preliminaries} \label{sec:prelim}

\subsection{Holomorphic and non-holomorphic modular forms}

Let $k \in \frac{1}{2}\Z$. Choosing the principal branch of the square-root throughout, the \emph{(Petersson) slash operator} is defined as 
\begin{align} \label{eq:PeterssonSVdef}
\left(f\vert_k\gamma\right)(\tau) \coloneqq \begin{cases}
(c\tau+d)^{-k} f(\gamma\tau) & \text{if } k \in \Z, \\
\left(\frac{c}{d}\right)\varepsilon_d^{2k}(c\tau+d)^{-k} f(\gamma\tau) & \text{if } k \in \frac{1}{2}+\Z,
\end{cases}
\quad \gamma = \left(\begin{matrix} a & b \\ c& d \end{matrix}\right) \in \begin{cases}
\slz & \text{if } k \in \Z, \\
\Gamma_0(4) & k \in \frac{1}{2}+\Z,
\end{cases}
\end{align}
where $\left(\frac{c}{d}\right)$ denotes the Kronecker symbol, and $\varepsilon_d \coloneqq 1,i$ if $d \equiv \pm1 \pmod*{4}$. The \emph{weight $k$ hyperbolic Laplace operator} is defined as
\begin{align} \label{eq:Deltadef}
\Delta_k \coloneqq - \xi_{2-k} \xi_k = -v^2\left(\frac{\partial^2}{\partial u^2}+\frac{\partial^2}{\partial v^2}\right) + ikv\left(\frac{\partial}{\partial u} + i\frac{\partial}{\partial v}\right).
\end{align}
\begin{defn}
Let $k \in \frac{1}{2}\Z$, $N \in \N$, and $f \colon \Hb \to \C$ be a smooth function. Assume that $4 \mid N$ whenever $k \in \frac{1}{2} + \Z$.
\begin{enumerate}[label={\rm (\alph*)}]
\item We say that $f$ is a \emph{(holomorphic) modular form} of weight $k$ on $\Gamma_0(N)$, if $f$ has the following properties:
\begin{enumerate}[label={\rm (\roman*)}]
\item we have $f\vert_k \gamma = f$ for all $\gamma \in \Gamma_0(N)$, 
\item we have $\xi_k f = 0$, that is $f$ is holomorphic on $\Hb$,
\item we have that $f$ is holomorphic at every cusp of $\Gamma_0(N)$.
\end{enumerate}
\item We say that $f$ is a \emph{harmonic (resp.\ weak) Maass form of weight $k$} on $\Gamma_0(N)$, if $f$ has the following properties:
\begin{enumerate}[label={\rm  (\roman*)}]
\item we have $f\vert_k \gamma = f$ for all $\gamma \in \Gamma_0(N)$, 
\item we have $-\xi_{2-k} \xi_k f = 0$ (resp.\ $-\xi_{2-k} \xi_k f = \lambda f$) for all $\tau \in \Hb$,
\item we have that $f$ is of at most linear exponential growth towards all cusps of $\Gamma_0(N)$.
\end{enumerate}
\item We say that $f$ is a \emph{sesquiharmonic Maass form of weight $k$} on $\Gamma_0(N)$, if $f$ has the following properties:
\begin{enumerate}[label={\rm  (\roman*)}]
\item we have $f\vert_k \gamma = f$ for all $\gamma \in \Gamma_0(N)$, 
\item we have $\xi_k \xi_{2-k} \xi_k f = 0$, that is $f$ is sesquiharmonic on $\Hb$,
\item we have that $f$ is of at most linear exponential growth towards all cusps of $\Gamma_0(N)$.
\end{enumerate}
\end{enumerate}
Any of these forms is an element of \emph{Kohnen's plus space} if its weight is half integral and its Fourier coefficients are supported on indices $n$ satisfying $(-1)^{k-\frac{1}{2}} n \equiv 0$, $1 \pmod*{4}$. 
\end{defn}

Let $V$ be the three-dimensional space of rational traceless $2 \times 2$ matrices. The lattice
\begin{align} \label{eq:latticedef}
L \coloneqq \left\{\left(\begin{matrix} b & \frac{c}{N} \\ a & -b \end{matrix}\right) \colon a,b,c \in \Z\right\}
\end{align}
is associated to $\Gamma_0(N)$. Its dual lattice is equal to
\begin{align} \label{eq:latticedualdef}
L' \coloneqq \left\{\left(\begin{matrix} b & \frac{c}{N} \\ a & -b \end{matrix}\right) \colon a,c \in \Z, b \in \frac{1}{2N}\Z\right\}.
\end{align}
We have $L'\slash L\cong \Z\slash2N\Z$ equipped with the quadratic form $x \mapsto x^2 / (4N)$, and the level of $L$ is $4N$. The quadratic form associated to $L$ is $Q(a,b,c) = Nb^2+ac$. The corresponding bilinear form is $\left(\left(\begin{smallmatrix} b_1 & \frac{c_1}{N} \\ a_1 & -b_1 \end{smallmatrix}\right), \left(\begin{smallmatrix} b_2 & \frac{c_2}{N} \\ a_2 & -b_2 \end{smallmatrix}\right)\right) = 2Nb_1b_2+a_1c_2+a_2c_1$, which yields signature $(2,1)$. 

The metaplectic double cover
\begin{align} \label{eq:Mp2Zdef}
\mathrm{Mp}_2(\Z) \coloneqq \left\{ (\gamma, \phi) \colon \gamma = \left(\begin{smallmatrix} a & b \\ c & d \end{smallmatrix}\right)\in \slz, \ \phi\colon \Hb \rightarrow \C \text{ holomorphic}, \ \phi^2(\tau) = c\tau+d  \right\},
\end{align}
of $\slz$ is generated by the pairs
\begin{align*}
\widetilde{T} \coloneqq \left(\left( \begin{matrix} 1 & 1 \\ 0 & 1 \end{matrix} \right),1\right), \qquad \widetilde{S} \coloneqq \left(\left( \begin{matrix} 0 & -1 \\ 1 & 0 \end{matrix}\right) ,\sqrt{\tau}\right),
\end{align*}
and the \emph{Weil representation} $\rho_L$ associated to $L$ is defined on the generators by
\begin{align} \label{eq:Weildef}
\rho_L\left(\widetilde{T}\right)(\ef_h) \coloneqq e^{2\pi iQ(h)} \ef_h, \qquad
\rho_L\left(\widetilde{S}\right)(\ef_h) \coloneqq \frac{e^{-\frac{\pi i}{4}}}{\sqrt{\vt{L'\slash L}}} \sum_{h' \in L'\slash L} e^{-2\pi i(h',h)_Q)} \ef_{h'},
\end{align}
where $\ef_{h}$ for $h \in L'\slash L$ is the standard basis of the group ring $\C[L'\slash L]$. The operator
\begin{align} \label{eq:PeterssonVVdef}
f\big\vert_{\kappa,L}(\gamma,\phi) (\tau) \coloneqq \phi(\tau)^{-2\kappa}\rho_{L}^{-1}(\gamma,\phi)f(\gamma\tau)
\end{align}
captures modularity with respect to $\rho_{L}$ of vector-valued functions. We let $M_{k, L}$ denote the space of vector-valued holomorphic modular forms.

Let $f$ be a smooth vector-valued automorphic form with respect to $\rho_L$ and denote the component functions of $f$ by $f_{h}$. If $N=1$ or $N=p$ is prime, the map
\begin{align} \label{eq:plusspaceprojection}
\sum_{h \pmod*{2N}} f_h(\tau)\ef_h \mapsto \sum_{h \pmod*{2N}} f_h(4N\tau)
\end{align}
projects $f$ to a scalar-valued form of the same weight in Kohnen's plus space, see \cite{eiza}*{Theorem 5.6}, \cite{brufuim}*{Example 2.2}, \cite{bor98}*{Example 2.4}.

The set $\Gamma_0(N) \backslash \Iso(V)$ of isotropic lines $\lf$ (that is $Q(\lf) = 0$) modulo $\Gamma_0(N)$ can be identified with the inequivalent cusps with $i\infty$ corresponding to the isotropic line $\lf_0$ spanned by $u_0 \coloneqq \left(\begin{smallmatrix} 0 & 1 \\ 0 & 0\end{smallmatrix}\right)$. Let $\sigma_{\lf} \in \slz$ be a scaling matrix of the cusp $\lf$, that is $\sigma_{\lf}\lf_0 = \lf$ (unique up to sign). Set $u_\lf = \sigma_{\lf}^{-1}u_0$. Then, there exists some $\beta_{\lf} \in \Q_{>0}$ such that $\beta_{\lf}u_{\lf}$ is a primitive element of $\lf \cap L$ (that is we have $\Q\beta_{\lf}u_{\lf} \cap L = \Z\beta_{\lf}u_{\lf}$). Then, set $\widehat{\varepsilon}_{\lf} \coloneqq \frac{\alpha_{\lf}}{\beta_{\lf}}$, where $\alpha_{\lf}$ is the width of the cusp.

\begin{lemma}[\cite{alfehl}*{Subsection 6.3}] \label{lem:cuspparameters}
Let $N=p$ be a prime.
\begin{enumerate}[label={\rm (\roman*)}]
\item The cusp $i\infty$ has width $\alpha_{\infty} = 1$ and parameters $\beta_{\infty} = \frac{1}{p}$, $\widehat{\varepsilon}_{\infty} = p$.
\item The cusp $0$ has width $\alpha_{0} = p$ and parameters $\beta_{0} = 1$, $\widehat{\varepsilon}_{0} = p$.
\end{enumerate}
\end{lemma}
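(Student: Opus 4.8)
The plan is to verify the statement by a direct computation, treating the two cusps of $\Gamma_0(p)$ separately and reading off each parameter from the definitions immediately preceding the lemma. Since $p$ is prime, $\Gamma_0(p)$ has exactly the two cusps $i\infty$ and $0$, and I would fix the scaling matrices $\sigma_\infty = \id$ and $\sigma_0 = \left(\begin{smallmatrix} 0 & -1 \\ 1 & 0\end{smallmatrix}\right)$, the latter sending $i\infty$ to $0$. Throughout, $\slz$ acts on $V$ by conjugation $g.X = gXg^{-1}$, which preserves $Q$ and permutes the isotropic lines, and $i\infty$ corresponds to $\lf_0 = \Q u_0$ as recorded in the text.

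First I would compute the widths $\alpha_\lf$. For $i\infty$ the stabilizer $(\Gamma_0(p))_\infty$ is generated by $\left(\begin{smallmatrix}1&1\\0&1\end{smallmatrix}\right)$, so $\alpha_\infty = 1$. For the cusp $0$, conjugating $\left(\begin{smallmatrix}1 & h\\0&1\end{smallmatrix}\right)$ by $\sigma_0$ produces $\left(\begin{smallmatrix}1 & 0\\-h&1\end{smallmatrix}\right)$, which lies in $\Gamma_0(p)$ if and only if $p \mid h$; hence the minimal positive width is $\alpha_0 = p$. Equivalently, one may invoke the standard formula $N/\gcd(c^2,N)$ for the width of the cusp $a/c$ of $\Gamma_0(N)$.

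Next I would determine $\beta_\lf$ by locating the primitive vector of $\lf \cap L$. For $i\infty$ we have $u_\infty = u_0 = \left(\begin{smallmatrix}0&1\\0&0\end{smallmatrix}\right)$, and a rational multiple $t u_0$ lies in $L$ exactly when its upper-right entry $t$ has the form $c/p$ with $c \in \Z$, i.e.\ when $t \in \frac{1}{p}\Z$; thus $\lf_0 \cap L = \Z \cdot \tfrac{1}{p} u_0$ and $\beta_\infty = \tfrac{1}{p}$. For the cusp $0$, conjugation gives $u_0^{\mathrm{cusp}} = \sigma_0^{-1} u_0 \sigma_0 = \left(\begin{smallmatrix}0 & 0 \\ -1 & 0\end{smallmatrix}\right)$, whose integral multiples lie in $L$ precisely when the lower-left entry is an integer, so the primitive generator is $\left(\begin{smallmatrix}0&0\\1&0\end{smallmatrix}\right)$ and $\beta_0 = 1$. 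Finally, $\widehat{\varepsilon}_\lf = \alpha_\lf/\beta_\lf$ yields $\widehat{\varepsilon}_\infty = 1/(1/p) = p$ and $\widehat{\varepsilon}_0 = p/1 = p$.

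The only genuinely non-automatic point, and hence the main thing to get right, is the appearance of the denominator $N=p$ in the $(1,2)$-entry of $L$: it forces $\lf_0 \cap L = \Z\cdot\tfrac1p u_0$ rather than $\Z u_0$, which is exactly what makes $\beta_\infty = \tfrac1p$ (and thus $\widehat\varepsilon_\infty = p$) instead of the naive value $1$. Beyond this subtlety the argument is bookkeeping: one must fix the conventions for the conjugation action and the choice of $\sigma_\lf$ consistently, but since the two candidate generators of each isotropic line agree up to sign, the resulting values of $\beta_\lf$ and $\widehat\varepsilon_\lf$ are unambiguous.
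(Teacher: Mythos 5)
Your computation is correct: the widths ($\alpha_\infty = 1$, $\alpha_0 = p$), the primitive generators of $\lf \cap L$ read off from the shape of $L$ in \eqref{eq:latticedef} (where the $\tfrac{c}{p}$ upper-right entry forces $\lf_0 \cap L = \Z\cdot\tfrac{1}{p}u_0$, hence $\beta_\infty = \tfrac{1}{p}$, while the integral lower-left entry gives $\beta_0 = 1$), and the resulting $\widehat{\varepsilon}_\lf = \alpha_\lf/\beta_\lf = p$ at both cusps all check out, and you correctly flag that the sign/conjugation conventions do not affect the answer. The paper itself offers no proof --- it imports the lemma wholesale from \cite{alfehl}*{Subsection 6.3} --- so your direct unwinding of the definitions is precisely the verification that the citation stands in for.
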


\subsection{Work of Pei and Wang} \label{subsec:PeiWang}
Let $\mu$ be the M{\"o}bius function, $L(s, \chi)$ be the Dirichlet $L$-function and abbreviate $\chi_{d} \coloneqq \left(\frac{d}{\cdot}\right)$. We recall that
\begin{align}
L_N(s, \chi) &\coloneqq L(\chi,s)\prod_{p\mid N} \left(1-\chi(p)p^{-s}\right) = \prod_{p\nmid N}\frac{1}{1-\chi(p)p^{-s}} = \sum_{\substack{n \geq 1 \\ \gcd(n,N)=1}} \frac{\chi(n)}{n^s}. \label{eq:L_Ndef}
\end{align}
Moreover, let $N$ be odd and square-free, $\ell \mid N$, and
\begin{align} \label{eq:divisorsums}
\sigma_{\ell,N,s}(r) \coloneqq
\sum_{\substack{d \mid r \\ \gcd(d,\ell)=1 \\ \gcd\left(\frac{r}{d},\frac{N}{\ell}\right)=1}} d^s, \qquad \sigma_{N,s}(r) &\coloneqq \sigma_{N,N,s}(r), \qquad \sigma_s(r) \coloneqq \sigma_{1,s}(r),
\end{align}
For $\ell \neq N$, Pei and Wang defined the \emph{generalized Hurwitz class numbers}
\begin{align} \label{eq:HellNdef}
H_{\ell,N}(n)
&\coloneqq \begin{cases}
0 & \text{if } n=0, \\
L_{\ell}\left(0,\chi_{t}\right) \prod\limits_{p\mid \frac{N}{\ell}} \frac{1-\chi_t(p)p^{-1}}{1-p^{-2}} \sum\limits_{\substack{a \mid m \\ \gcd(a,N) = 1}} \mu(a) \chi_t(a) \sigma_{\ell,N,1}\left(\frac{m}{a}\right) & \begin{array}{@{}l} \text{if } -n=tm^2, \\ t \text{ fundamental}, \end{array} \\
0 & \text{else}.
\end{cases}
\end{align}
and
\begin{align} \label{eq:HNNdef}
H_{N,N}(n) 
&\coloneqq \begin{cases}
L_N\left(-1,\mathrm{id}\right) & \text{if } n=0, \\
L_N(0,\chi_t) \sum\limits_{\substack{a \mid m \\ \gcd(a,N) = 1}} \mu(a) \chi_t(a) \sigma_{N,1}\left(\frac{m}{a}\right) & \begin{array}{@{}l} \text{if } -n=tm^2, \\ t \text{ fundamental}, \end{array}  \\
0 & \text{else},
\end{cases}
\end{align}
Here, we modified their original definition of these numbers slightly by including the summation condition $\gcd(a,N) = 1$. This is justified by the remark following \cite{bemo}*{(2.8), (2.9)}. We recall from the introduction that 
$
H_{1,1}(n) = H(n) = \tr_{-n}(1)
$
are the classical Hurwitz class numbers by Dirichlet's class number formula.

Let $E_{\frac{3}{2}}^+(4N)$ denote the Eisenstein plus subspace of $M_{\frac{3}{2}}(4N)$. Pei and Wang \cite{peiwang}*{Theorem 1 (I)} established that 
\begin{align*}
\mathrm{dim}_{\C}\left(E_{\frac{3}{2}}^+(4N)\right) = 2^{\omega(N)}-1, \qquad \omega(N) \coloneqq \sum_{p \mid N} 1,
\end{align*}
and that a basis of $E_{\frac{3}{2}}^+(4N)$ is given by the generating functions $\Hs_{\ell,N}$ of $H_{\ell,N}$ from equation \eqref{eq:CEdef} for $\ell > 1$. In particular, if $N=p$ is an odd prime then $E_{\frac{3}{2}}^+(4p)$ is one-dimensional and generated by $\Hs_{p,p}$. We emphasize once more that both $\Hs_{1,1}$ as well as $\Hs_{1,N}$ are not modular.

\subsection{Scalar-valued Fourier expansions}

Let
\begin{align} \label{eq:Ks0Ndef}
\Ks_{0,N}(m,n;s) \coloneqq \sum_{\substack{c > 0 \\ N \mid c}} \frac{1}{c^{s}} \sum_{\substack{r \pmod*{c} \\ \gcd(c,r)=1}} e^{2\pi i \frac{mr^*+nr}{c}}, \qquad \re(s) > 1,
\end{align}
be the usual integral weight level $N$ \emph{Kloosterman zeta function},  let
\begin{align} \label{eq:Ks0Ntildedef} 
\widetilde{\Ks}_{0,N}(m,n;s) \coloneqq \sum_{\substack{d > 0 \\ \gcd(d,N)=1}} \frac{1}{d^s} \sum_{\substack{r \pmod*{d} \\ \gcd(r,d)=1}} e^{2\pi i \frac{mr^*+nr}{d}}, \qquad \re(s) > 1.
\end{align}
be the \emph{modified Kloosterman zeta function} associated to the same data, and let
\begin{align} \label{eq:Ks4Nplusdef}
\Ks_{k,4N}^+(m,n;s) \coloneqq \sum_{c > 0} \frac{1+\left(\frac{4}{c}\right)}{(4Nc)^{s}} \sum_{\substack{r \pmod*{4Nc} \\ \gcd(4Nc,r)=1}} \left(\frac{4Nc}{r}\right)\varepsilon_r^{2k} e^{2\pi i \frac{mr^*+nr}{4Nc}}, \qquad \re(s) > 1,
\end{align}
for $k \in \Z + \frac{1}{2}$ be the level $4N$ \emph{plus space Kloosterman zeta function}. Let $W_{\mu,\nu}$ be the usual $W$-Whittaker function.
\begin{lemma} \label{lem:Eisensteinexpansions}
Let $\Fc_{k,N}$, $\Fc_{k,4N}^+$ be the non-holomorphic Eisenstein series from equation \eqref{eq:Fcdef}.
\begin{enumerate}[label={\rm (\roman*)}]
\item Let $\re(s) > 1$. The Fourier expansion of $\Fc_{0,N}$ about $i\infty$ is
\begin{multline*}
\hspace*{\leftmargini} \Fc_{0,N}(\tau,s) = v^{s} +  \frac{4^{1-s}\pi \Gamma(2s-1)}{\Gamma\left(s\right)^2} \Ks_{0,N}(0,0;2s) v^{1-s} \\
+ \frac{\pi^{s}}{\Gamma\left(s\right)} \sum_{n \neq 0} \Ks_{0,N}(0,n;2s) \vt{n}^{s-1} W_{0,s-\frac{1}{2}}(4\pi \vt{n} v) e^{2\pi i n u}.
\end{multline*}
\item Let $\re(s) > 1$ and $W_N$ be the Fricke involution. The Fourier expansion of $\Fc_{0,N}$ about $0$ is
\begin{multline*}
\hspace*{\leftmargini} \left(\Fc_{0,N} \big\vert_0 W_N\right) (\tau,s) = \frac{v^s}{N^s}\delta_{N=1} + \frac{4^{1-s} \pi \Gamma(2s-1)}{\Gamma(s)^2 N^s} \widetilde{\Ks}_{0,N}(0,0;2s) v^{1-s} \\
+ \frac{\pi^{s}}{N^s \Gamma(s)} \sum_{n \neq 0} \widetilde{\Ks}_{0,N}(0,n;2s) \vt{n}^{s-1} W_{0,s-\frac{1}{2}}(4\pi\vt{n}v)e^{2\pi i n u}.
\end{multline*}
\item Let $k \in -\frac{1}{2}\N + 1$ and $\re(s) > 1-\frac{k}{2}$. The Fourier expansion of $\Fc_{k,4N}^+$ about $i\infty$ is
\begin{multline*}
\hspace*{\leftmargini} \Fc_{k,4N}^+(\tau,s) = \frac{2}{3}\left(v^{s-\frac{k}{2}} + \frac{4^{1-s}\pi i^{-k}\Gamma(2s-1)}{\Gamma\left(s+\frac{k}{2}\right)\Gamma\left(s-\frac{k}{2}\right)} \Ks_{k,4N}(0,0;2s) v^{1-s-\frac{k}{2}} \right) \\
+ \frac{2}{3}i^{-k}\pi^{s}v^{-\frac{k}{2}} \sum_{\substack{n \neq 0 \\ (-1)^{k-\frac{1}{2}}n \equiv 0,1 \pmod*{4}}}  \frac{\Ks_{k,4N}(0,n;2s) \vt{n}^{s-1}}{\Gamma\left(s+\sgn(n)\frac{k}{2}\right)} W_{\sgn(n)\frac{k}{2},s-\frac{1}{2}}(4\pi \vt{n} v) e^{2\pi i n u}.
\end{multline*}
\end{enumerate}
\end{lemma}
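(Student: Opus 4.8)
The plan is to obtain all three expansions by the classical unfolding-and-Poisson-summation technique; I would carry out part (i) in full detail and then describe the modifications needed for (ii) and (iii). For part (i), I would first isolate the identity coset in the definition \eqref{eq:Fcdef} of $\Fc_{0,N}$, which contributes the term $v^s$. The remaining cosets in $(\Gamma_0(N))_\infty\backslash\Gamma_0(N)$ are parametrized by their bottom rows $(c,d)$ with $c>0$, $N\mid c$ and $\gcd(c,d)=1$, so that $\im(\gamma\tau)^s = v^s\vt{c\tau+d}^{-2s}$. Writing $d=r+cm$ with $r$ running over residues mod $c$ coprime to $c$ and $m\in\Z$, and applying Poisson summation to the sum over $m$, I would Fourier expand each inner sum in the variable $u$. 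The Fourier coefficient attached to frequency $n$ is the integral $\int_\R (x^2+v^2)^{-s}e^{-2\pi i n x}\,\dm x$; for $n=0$ this is $\frac{\sqrt\pi\,\Gamma(s-\frac12)}{\Gamma(s)}v^{1-2s}$, and for $n\neq0$ it evaluates to a $K$-Bessel function, which I would convert to the $W$-Whittaker function via $W_{0,\nu}(y)=\sqrt{y/\pi}\,K_\nu(y/2)$. Collecting the finite exponential sums $\sum_{r\pmod{c},\,\gcd(c,r)=1}e^{2\pi i n r/c}$ over $c$ assembles exactly the Kloosterman zeta function $\Ks_{0,N}(0,n;2s)$ of \eqref{eq:Ks0Ndef} (with $m=0$, so the inverse $r^*$ plays no role), and the Legendre duplication formula $\Gamma(2s-1)=\frac{2^{2s-2}}{\sqrt\pi}\Gamma(s-\frac12)\Gamma(s)$ rewrites the $n=0$ gamma factor into the stated $\frac{4^{1-s}\pi\Gamma(2s-1)}{\Gamma(s)^2}$.

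For part (ii), I would apply the same procedure after conjugating by the Fricke involution $W_N$, i.e.\ expanding $\Fc_{0,N}\vert_0 W_N$ about $i\infty$. Conjugation sends the condition $N\mid c$ to $\gcd(d,N)=1$, which turns $\Ks_{0,N}$ into the modified zeta function $\widetilde\Ks_{0,N}$ of \eqref{eq:Ks0Ntildedef}, while the normalization of $W_N$ produces the uniform factors $N^{-s}$. The leading term $v^s$ survives only when the cusp $0$ is $\Gamma_0(N)$-equivalent to $i\infty$, that is when $N=1$, which accounts for the $\delta_{N=1}$.

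Part (iii) is the half-integral weight analogue and carries the main technical weight. Here the slash operator contributes the theta multiplier $\left(\frac{c}{d}\right)\varepsilon_d^{2k}$ together with the automorphy factor $(c\tau+d)^{-k}$, so the Fourier-transform integral acquires the phase $e^{-ik\theta}$ with $c\tau+d=\vt{c\tau+d}e^{i\theta}$. The resulting oscillatory integral no longer produces $K_{s-\frac12}$ but the full Whittaker function $W_{\sgn(n)\frac{k}{2},\,s-\frac12}$, with the first parameter and the gamma factor $\Gamma(s+\sgn(n)\frac{k}{2})$ depending on the sign of $n$; I expect correctly tracking this phase and matching the standard half-integral weight Fourier-transform identity to be the crux of the argument. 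The plus-space projection \eqref{eq:plusspaceprojection} then introduces the overall factor $\frac23$, inserts the factor $1+\left(\frac{4}{c}\right)$ into the Kloosterman sum (producing $\Ks_{k,4N}^+$), and restricts the Fourier support to indices $n$ with $(-1)^{k-\frac12}n\equiv0,1\pmod{4}$, while the multiplier is responsible for the $i^{-k}$ appearing in every term. Since the series defining the Kloosterman zeta functions converge for $\re(s)>1$ respectively $\re(s)>1-\frac{k}{2}$, all the interchanges above are justified in the stated ranges, and the expansions extend to the full domain by meromorphic continuation.
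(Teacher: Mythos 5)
Your proposal is correct and takes essentially the same route as the paper: for parts (i) and (iii) the paper simply cites \cite{jkk1}*{Theorems 3.2 and 4.4}, where the proofs are exactly your unfolding-plus-Poisson-summation argument (including the conversion of $K$-Bessel functions to $W$-Whittaker functions and the effect of Kohnen's projection on the Kloosterman sums), and for part (ii) the paper carries out the very computation you describe --- unfolding after the Fricke involution, splitting off the $\delta_{N=1}$ term from the $d=0$ contribution, decomposing $c = dj+r$, and invoking a generalized Lipschitz summation formula (\cite{maass64}, \cite{siegel56}), which is precisely the packaged form of your Poisson-summation step. The only difference is presentational: you reconstruct in outline the computations that the paper outsources to the literature.
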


\begin{proof}
Parts (i) and (iii) can be found in \cite{jkk1}*{Theorems 3.2 and 4.4} for example, some details can be found in \cite{bemo}*{Subsection 2.3} as well. To prove part (ii), we use \cite{vass}*{Proposition 6.2.5}. The usual coset representatives for $\Gamma_0(N)_{\infty} \backslash \Gamma_0(N)$ yield
\begin{align*}
\left(\Fc_{0,N} \Big\vert_0 W_N\right) (\tau,s) = \frac{v^{s}}{2N^s} \sum_{\substack{(c, d) \in \Z^2 \setminus \{(0,0)\} \\ \gcd(Nc,d) = 1} } \frac{1}{\vt{c+d\tau}^{2s}} = \frac{v^s}{N^s}\delta_{N=1} + \frac{v^{s}}{2N^s} \sum_{\substack{c \in \Z, \ d \neq 0 \\ \gcd(Nc,d) = 1} } \frac{1}{\vt{c+d\tau}^{2s}}.
\end{align*}
The claim follows by decomposing $c = dj+r$ with $j \in \Z$, $0 \leq r < d$ and using a generalized Lipschitz summation formula (see \cite{maass64}*{pp.\ 207}, \cite{siegel56}*{p.\ 366}). 
\end{proof}

Moreover, we recall
\begin{align} \label{eq:h*def}
h^*(n) = \frac{1}{2\pi} \sum_{r^2 \mid n} \left(2\log\left(\widetilde{\varepsilon}_{\frac{n}{r^2}}\right)\right) h\left(\frac{n}{r^2}\right), \qquad n > 0,
\end{align}
from \cite{ditimrn}*{(2.3)} as well as \cite{alansa}*{(1.5)}. The harmonic part of weight $\frac{1}{2}$ sesquiharmonic Maass forms is given in terms of
\begin{align} \label{eq:alphaspecialdef}
\alpha(y) \coloneqq \sqrt{y} \int_{0}^{\infty} \frac{\log(t+1)}{\sqrt{t}} e^{-\pi y t} \dm t
\end{align}
which is a slight renormalization compared to \cite{dit11annals}*{p.\ 952}.
\begin{lemma}[\cite{dit11annals}*{Theorem 4}, \cite{alansa}*{(1.7)}] \label{lem:EcExpansion}
The function $\Ec$ is a weight $\frac{1}{2}$ sesquiharmonic Maass form on $\Gamma_0(4)$ with Fourier expansion
\begin{multline*}
\Ec(\tau) = \frac{1}{3}v^{\frac{1}{2}} - \frac{1}{4\pi}\log(v) - \frac{1}{\pi}\left(\log(4)-\gamma-\frac{\zeta'(2)}{\zeta(2)}\right) \\
+ \sum_{\substack{d > 0 \\ d \neq \square}} \frac{h^*(d)}{\sqrt{d}} q^d + \sum_{n > 0} \bff\left(n^2\right)q^{n^2} + \frac{1}{4\pi} \sum_{n \neq 0} \alpha\left(4n^2v\right) q^{n^2} + 2v^{\frac{1}{2}} \sum_{d < 0} h^*(d) \Gamma\left(\frac{1}{2}, 4\pi \vt{d}v\right)q^d.
\end{multline*}
\end{lemma}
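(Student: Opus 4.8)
The plan is to substitute the Fourier expansion of $\Fc_{\frac12,4}^+(\tau,s)$ supplied by Lemma \ref{lem:Eisensteinexpansions}(iii) (specialised to $k=\frac12$, $N=1$) into the definition \eqref{eq:Ecdef} and read off each Fourier mode. The organising principle is the elementary observation that if a function $g(s)$ has at most a simple pole at $s=\frac34$, with Laurent expansion $g(s)=\frac{g_{-1}}{s-\frac34}+g_0+O(s-\frac34)$, then $\frac{\partial}{\partial s}\big[(s-\tfrac34)g(s)\big]\big|_{s=\frac34}=g_0$. Since the defining expression \eqref{eq:Ecdef} is finite, $\Fc_{\frac12,4}^+(\tau,s)$ can have at most a simple pole at $s=\frac34$, so $\Ec=\frac13\,\CT_{s=\frac34}\big(\Fc_{\frac12,4}^+(\tau,s)\big)$. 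It therefore suffices to compute, mode by mode, the constant term in the Laurent expansion at $s=\frac34$ of every Fourier coefficient in Lemma \ref{lem:Eisensteinexpansions}(iii).

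The two $n=0$ pieces produce the non-oscillating part of the expansion. The term $v^{s-\frac14}$ is holomorphic in $s$ and contributes the $v^{\frac12}$ term. For the term carrying $\Ks_{\frac12,4}(0,0;2s)\,v^{\frac34-s}$ I would expand $v^{\frac34-s}=1-(s-\frac34)\log v+\cdots$; the residue of the constant Kloosterman zeta function at $s=\frac34$ then multiplies $-\log v$ and supplies the $-\frac1{4\pi}\log v$ term, while the finite part of that Laurent expansion yields the constant $-\frac1\pi\big(\log 4-\gamma-\frac{\zeta'(2)}{\zeta(2)}\big)$. The Euler--Mascheroni constant and the logarithmic derivative $\frac{\zeta'(2)}{\zeta(2)}$ enter precisely through the Laurent expansion of the zeta factor hidden in $\Ks_{\frac12,4}(0,0;2s)$ once it is written in terms of $\zeta(2s)$ and $\zeta(2s-1)$.

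For the oscillating coefficients one must evaluate the Kloosterman zeta functions $\Ks_{\frac12,4}(0,n;2s)$. When $n$ is not a perfect square these are holomorphic at $s=\frac34$, so only their value there contributes to the constant term; standard $L$-function factorisations together with \eqref{eq:h*def} identify these values with the modified class numbers $h^*(|n|)$. The attached Whittaker functions $W_{\sgn(n)/4,\,s-\frac12}(4\pi|n|v)$ then separate the two signs: at $s=\frac34$ the function with $n=d>0$ reduces to an elementary exponential and, after the $v^{-\frac14}$ prefactor, yields the holomorphic terms $\frac{h^*(d)}{\sqrt d}q^d$, whereas for $n=d<0$ it is expressible through the incomplete $\Gamma$-function $\Gamma(\frac12,4\pi|d|v)$ and produces the non-holomorphic terms $2v^{\frac12}h^*(d)\,\Gamma(\frac12,4\pi|d|v)q^d$.

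The genuinely delicate family, and the main obstacle, is the positive square indices $n=m^2$. For these $\Ks_{\frac12,4}(0,m^2;2s)$ acquires a pole at $s=\frac34$ --- exactly the phenomenon recorded in \cite{bemo}*{Proposition 3.7} --- so extracting the constant Laurent term of the product with $W_{\frac14,s-\frac12}(4\pi m^2 v)$ forces one to differentiate the Whittaker function in its spectral index at $s=\frac34$. This $s$-derivative is precisely what manufactures Duke, Imamo\={g}lu and T\'{o}th's special function $\alpha$ from \eqref{eq:alphaspecialdef}, giving the term $\frac1{4\pi}\sum_{n\neq0}\alpha(4n^2v)q^{n^2}$, while the remaining finite contributions (the regular part of $\Ks_{\frac12,4}(0,m^2;2s)$ times the Whittaker value, plus the residue times the derivative of the holomorphic prefactor) assemble into the holomorphic coefficients $\bff(m^2)q^{m^2}$. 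The explicit evaluation of $\bff(m^2)$ --- which needs both the residue and the constant term of the plus-space Kloosterman zeta at a square index --- I would defer to Theorem \ref{thm:cfsquareindices}(i). This square-indexed analysis is the crux: it simultaneously demands the full Laurent data of a Kloosterman zeta function and the $s$-expansion of a Whittaker function, and it is the single source of both the non-holomorphic $\alpha$-contribution and the arithmetically subtle coefficients $\bff(m^2)$.
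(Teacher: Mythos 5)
The first thing to note is that the paper does not prove this lemma at all: the expansion is imported wholesale from \cite{dit11annals}*{Theorem 4} as corrected in \cite{alansa}*{(1.7)}, and the only pieces verified inside the paper are the constant terms, recovered in \eqref{eq:DITconstantterms} from \cite{dit11annals}*{Proposition 2, equation (2.24)}, and the square-indexed coefficients $\bff\left(m^2\right)$, computed in Theorem \ref{thm:cfsquareindices} (i) via \cite{andu}*{Proposition 5.7}. Your plan --- substitute Lemma \ref{lem:Eisensteinexpansions} (iii) into \eqref{eq:Ecdef} and extract constant Laurent terms mode by mode --- is precisely the route of the cited sources (and of the paper's own level-$4p$ computations), and its skeleton is sound: the identity $\frac{\partial}{\partial s}\bigl[(s-\tfrac{3}{4})g(s)\bigr]\big\vert_{s=\frac{3}{4}}=g_0$ for $g$ with at most a simple pole; the $n=0$ mode giving $\tfrac{1}{3}v^{\frac{1}{2}}$, the residue multiplying $-\log v$ and the finite part giving the constant; Kloosterman-zeta values at $s=\tfrac{3}{4}$ combined with Dirichlet's class number formula and \eqref{eq:h*def} giving the $h^*$-coefficients, with the Whittaker functions $W_{\pm\frac{1}{4},\frac{1}{4}}$ separating the holomorphic terms from the incomplete-Gamma terms; and, at positive square indices, the pole of $\Ks_{\frac{1}{2},4}^+\left(0,m^2;2s\right)$ forcing the $s$-derivative of $W_{\frac{1}{4},s-\frac{1}{2}}$, which is exactly the source of $\alpha$, with the finite leftovers being $\bff\left(m^2\right)$ (correctly deferred to Theorem \ref{thm:cfsquareindices} (i), matching the paper's bookkeeping). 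One factual slip: the zeta quotient governing the poles is $\zeta(4s-2)/\zeta(4s-1)$ for the constant term, respectively $\zeta(2s-\tfrac{1}{2})/\zeta(4s-1)$ at square indices (see \eqref{eq:DITconstantterms} and the proof of Theorem \ref{thm:cfsquareindices} (i)), not $\zeta(2s)$ and $\zeta(2s-1)$; the mechanism you describe ($\gamma$ from the pole of $\zeta$ at $1$, $\zeta'(2)/\zeta(2)$ from the denominator) is otherwise correct.

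Two genuine gaps remain. First, the lemma asserts that $\Ec$ is a weight $\tfrac{1}{2}$ \emph{sesquiharmonic Maass form} on $\Gamma_0(4)$, and nothing in your proposal addresses modularity, the differential equation $\xi_{\frac{1}{2}}\xi_{\frac{3}{2}}\xi_{\frac{1}{2}}\Ec=0$, or the growth at cusps other than $i\infty$; reading off Fourier modes at $i\infty$ cannot deliver these. The missing argument is short but must be given: modularity for each fixed $s$ passes to every Laurent coefficient of $\Fc_{\frac{1}{2},4}^+(\tau,s)$ at $s=\tfrac{3}{4}$; comparing powers of $s-\tfrac{3}{4}$ in the eigenvalue identity $\Delta_{\frac{1}{2}}\Fc_{\frac{1}{2},4}^+(\tau,s)=\left(s-\tfrac{1}{4}\right)\left(\tfrac{3}{4}-s\right)\Fc_{\frac{1}{2},4}^+(\tau,s)$ yields $\Delta_{\frac{1}{2}}\Ec = -\tfrac{1}{6}\,\mathrm{Res}_{s=\frac{3}{4}}\Fc_{\frac{1}{2},4}^+(\tau,s)$, and this residue is a holomorphic modular form (a multiple of $\theta$ by Serre--Stark), hence annihilated by $\xi_{\frac{1}{2}}$; growth at the remaining cusps follows as in the proof of Theorem \ref{thm:GcAsThetaLift} (ii) via \cite{kubota}*{Theorem 2.1.2}. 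Second, your justification of the simple pole is circular: you argue that $\Fc_{\frac{1}{2},4}^+$ has at most a simple pole at $s=\tfrac{3}{4}$ ``since the defining expression \eqref{eq:Ecdef} is finite,'' but \eqref{eq:Ecdef} only makes sense once the meromorphic continuation with at most a simple pole is known. That continuation --- obtained coefficient-wise from the explicit zeta and $L$-quotients, cf.\ \cite{bemo}*{Proposition 3.7} --- together with the justification that the continued Fourier series still represents the continued Eisenstein series, is an input to your computation, not a consequence of it, and must be established before the mode-by-mode Laurent extraction is legitimate.
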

The coefficients $\bff\left(n^2\right)$ are determined in Theorem \ref{thm:cfsquareindices} (i). We recall the notation
\begin{align} \label{eq:c(n)def}
\cf\left(n\right) \coloneqq \frac{\partial}{\partial s} \left[ \left(s-\frac{3}{4}\right) \Ks_{\frac{1}{2},4N}^+(0,n;2s) \right] \Bigg\vert_{s=\frac{3}{4}}.
\end{align}
from \cite{bemo}*{Section 3}. More detailed expressions for $\Ks_{\frac{1}{2}}^+$ and $\cf(n)$ in the case of $n \neq \square$ can be found there as well.
\begin{lemma}[\cite{bemo}*{Theorem 1.3 (i) and (ii)}] \label{lem:GcExpansion}
Let $N \in \N$ be odd and square-free and $\gamma$ be the Euler--Mascheroni constant. The function $\Gc$ is a weight $\frac{1}{2}$ sesquiharmonic Maass form on $\Gamma_0(4N)$ with Fourier expansion
\begin{multline*}
\Gc(\tau) = \frac{2}{3} v^{\frac{1}{2}} - \frac{\log(16v)}{2\pi} \prod_{p\mid N} \frac{1}{p+1}  + \frac{1}{\pi} \prod_{p\mid N} \frac{1}{p+1} \sum_{m \geq 1 }  \left(\gamma + \log\left(\pi m^2\right) +  \alpha\left(4m^2v\right) \right) q^{m^2} \\
+ \frac{2}{3}(1-i) \pi \sum_{\substack{n \geq 0 \\ n \equiv 0,1 \pmod*{4}}} \cf(n)q^n + \frac{2}{3} (1-i) \pi^{\frac{1}{2}} \sum_{\substack{n < 0 \\ n \equiv 0,1 \pmod*{4}}}  \cf(n) \Gamma\left(\frac{1}{2},4\pi\vt{n}v\right) q^n.
\end{multline*}
\end{lemma}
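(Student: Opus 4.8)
The plan is to read the expansion off termwise from Lemma \ref{lem:Eisensteinexpansions} (iii) with $k=\frac12$, since \eqref{eq:Gcdef} defines $\Gc$ by applying $\frac{\partial}{\partial s}[(s-\frac34)\,\cdot\,]\big\vert_{s=\frac34}$ to $\Fc_{\frac12,4N}^+$. First I would record that this operation extracts the constant Laurent coefficient at $s=\frac34$: from a local expansion $a_{-1}(s-\frac34)^{-1}+a_0+\cdots$ one gets $(s-\frac34)\Fc_{\frac12,4N}^+=a_{-1}+a_0(s-\frac34)+\cdots$, whose $s$-derivative at $s=\frac34$ is $a_0$, so that $\Gc=\CT_{s=\frac34}\Fc_{\frac12,4N}^+$. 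It then suffices to compute, mode by mode, the constant Laurent term at $s=\frac34$ of each coefficient in Lemma \ref{lem:Eisensteinexpansions} (iii). The global factor $\frac23$, the phase $i^{-1/2}=\frac{1-i}{\sqrt2}$, and the value $(1-i)\pi$ of $\frac{4^{1-s}\pi i^{-1/2}\Gamma(2s-1)}{\Gamma(s+\frac14)\Gamma(s-\frac14)}$ at $s=\frac34$ will account for the recurring constants $\frac23$ and $(1-i)\pi$.

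Next I would treat the three families of modes. The two $n=0$ pieces give the growth terms: $\frac23 v^{s-1/4}$ is entire in $s$ and contributes $\frac23 v^{1/2}$, while its companion carries $v^{3/4-s}=1-(s-\frac34)\log v+\cdots$ together with $\Ks_{\frac12,4N}^+(0,0;2s)$, which has a simple pole at $s=\frac34$. Multiplying the Laurent expansion of the Kloosterman zeta against $v^{3/4-s}$ and the Gamma quotient and extracting the constant term produces a piece proportional to $\log v$ (residue times $-\log v$) plus a $v$-independent constant; evaluating the residue and constant term of $\Ks_{\frac12,4N}^+(0,0;2s)$ in terms of $\zeta$- and Dirichlet $L$-values with the local Euler factors at $p\mid N$ should give exactly $-\frac{\log(16v)}{2\pi}\prod_{p\mid N}\frac1{p+1}$. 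For $n<0$ the zeta $\Ks_{\frac12,4N}^+(0,n;2s)$ is holomorphic at $s=\frac34$, so the constant term is its value there; since $W_{-1/4,s-1/2}(4\pi\vt{n}v)$ reduces at $s=\frac34$ to an incomplete $\Gamma$-function, collecting the surviving constants into \eqref{eq:c(n)def} yields the terms $\frac23(1-i)\pi^{1/2}\cf(n)\Gamma(\frac12,4\pi\vt{n}v)q^n$.

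The decisive family is $n>0$. For non-square $n$ the zeta is again holomorphic, and the exact evaluation $W_{1/4,1/4}(y)=e^{-y/2}y^{1/4}$ turns the coefficient into the holomorphic contribution $\frac23(1-i)\pi\,\cf(n)q^n$. For square indices $n=m^2$, by contrast, $\Ks_{\frac12,4N}^+(0,m^2;2s)$ has a simple pole at $s=\frac34$, so the constant Laurent term will receive two contributions: the constant term of the zeta times the Whittaker factor at $s=\frac34$, which folds into the holomorphic coefficient $\cf(m^2)$ already present in $\sum_{n\ge0}\cf(n)q^n$, and the residue times the $s$-derivative at $s=\frac34$ of the remaining factor $\pi^s(m^2)^{s-1}\Gamma(s+\frac14)^{-1}W_{1/4,s-1/2}(4\pi m^2v)$. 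Differentiating $W_{1/4,s-1/2}$ in its index at the confluent point $s=\frac34$ is what produces the $\log(t+1)$ integrand and hence the function $\alpha(4m^2v)$ of \eqref{eq:alphaspecialdef}, while differentiating $\pi^s(m^2)^{s-1}\Gamma(s+\frac14)^{-1}$ produces the constants $\gamma+\log(\pi m^2)$; together with the residue, normalized by the Euler factors at $p\mid N$, this should assemble the term $\frac1\pi\prod_{p\mid N}\frac1{p+1}\sum_{m\ge1}(\gamma+\log(\pi m^2)+\alpha(4m^2v))q^{m^2}$.

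The hard part will be exactly this square-indexed analysis. I would need to establish that $\Ks_{\frac12,4N}^+(0,m^2;2s)$ has a simple pole at $s=\frac34$ and compute its residue and constant term, which requires an explicit Euler-product and functional-equation analysis of the plus-space Kloosterman zeta carrying the theta multiplier, and I would need to identify the index-derivative of the Whittaker function at $s=\frac34$ with the Duke--Imamo\={g}lu--T\'{o}th function $\alpha$, pinning down the constants $\gamma+\log(\pi m^2)$. Since both the $n=0$ logarithmic term and the $n=m^2$ term rest on the same analytic input near $s=\frac34$, the bulk of the effort is the meromorphic continuation and local analysis of $\Ks_{\frac12,4N}^+(0,\cdot;2s)$; once its poles and residues are in hand, the remaining modes are routine.
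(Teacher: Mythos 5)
Your approach is correct and is essentially the one behind the result: the present paper does not prove Lemma \ref{lem:GcExpansion} but recalls it from \cite{bemo}*{Theorem 1.3 (i), (ii)}, where it is established exactly as you outline --- realizing $\Gc$ as the constant Laurent coefficient of $\Fc_{\frac{1}{2},4N}^+$ at $s=\frac{3}{4}$ and treating each Fourier mode of Lemma \ref{lem:Eisensteinexpansions} (iii) according to whether $\Ks_{\frac{1}{2},4N}^+(0,n;2s)$ is holomorphic ($n$ negative or positive non-square) or has a simple pole ($n=0$ or $n=\square$) there, with the pole/residue data coming from an Euler-product factorization of the plus-space Kloosterman zeta into an $L$-function ratio times local factors, i.e.\ precisely the machinery (\cite{bemo}*{(3.10), (3.11), Propositions 3.4--3.5, Lemma 3.6}) that Section \ref{sec:ProofOfThm1.3} of this paper re-uses to compute $\cf(0)$ and $\cf\left(m^2\right)$. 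The only caveat is that your self-identified ``hard part'' --- the meromorphic continuation and residues of $\Ks_{\frac{1}{2},4N}^+(0,\cdot\,;2s)$ and the identification of the index-derivative of $W_{\frac{1}{4},s-\frac{1}{2}}$ at $s=\frac{3}{4}$ with the special function $\alpha$ from \eqref{eq:alphaspecialdef} --- is stated as a plan rather than executed, but it is the same analysis carried out in \cite{bemo} (and, for the square-indexed constants, in the proof of Theorem \ref{thm:cfsquareindices} here), so your route coincides with the actual proof.
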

Part (iii) of \cite{bemo}*{Theorem 1.3} can be found in equation \eqref{eq:GcShadow}, and the coefficients $\cf\left(n^2\right)$ are determined in Theorem \ref{thm:cfsquareindices} (ii) and (iii).

\subsection{Quadratic traces}
Let
\begin{align} \label{eq:Quadraticformssets}
\begin{split}
\mathcal{Q}_{N,n} &\coloneqq \left\{ax^2+bxy+cy^2 \colon a,b,c \in \Z, N \mid a, b^2-4ac=n \right\}, \\
\mathcal{Q}_{N,n,h} &\coloneqq \left\{ax^2+bxy+cy^2 \colon a,b,c \in \Z, N \mid a, b^2-4ac=n, b \equiv h \pmod*{2N} \right\}
\end{split}
\end{align}
and
\begin{align*}
L_{m,h} \coloneqq \left\{X \in L+h \colon Q(X) = m\right\}
\end{align*}
for $m \in \Q^{\times}$ and $h \in L' \slash L$. The group $\Gamma_0(N)$ acts on $L_{m,h}$ with finitely many orbits, and we have the identification
\begin{align} \label{eq:quadraticformsidentification}
\Gamma_0(N) \backslash L_{m,h} \cong \mathcal{Q}_{N,4Nm,h} \slash \Gamma_0(N).
\end{align}
For $m < 0$ and any function $f \colon \Gamma_0(N)\backslash\Hb \to \C$, we define
\begin{align} \label{eq:trimagdef}
\tr_{m,h}(f) \coloneqq \sum_{Q \in \mathcal{Q}_{N,4Nm,h} \slash \Gamma_0(N)} \frac{1}{\vt{\Gamma_0(N)_Q}} f\left(\tau_Q\right), \qquad \tau_{[a,b,c]} \coloneqq \frac{-b}{2a} +i\frac{\vt{\frac{m}{N}}}{2\vt{a}}.
\end{align}
For $m > 0$, we let
\begin{align*}
S_{[a,b,c]} \coloneqq \left\{\tau \in \Hb \colon a\vt{\tau}^2+bu+c=0\right\}
\end{align*}
be the geodesic connecting the two roots of $Q(\tau,1)$. If $m$ is not a square then both roots are quadratic irrationals, and thus $S_Q$ is a half-circle perpendicular to $\R$. The stabilizer $\Gamma_0(N)_Q$ is infinite cyclic and $\Gamma_0(N)\backslash S_Q$ is a closed geodesic in $\Gamma_0(N) \backslash \Hb$. If $m$ is a square, then both roots are rational and one of them is equivalent to $i\infty$. In this case, the stabilizer $\Gamma_0(N)_Q$ is trivial and $\Gamma_0(N)\backslash S_Q$ has infinite length. Letting $f \colon \Gamma_0(N)\backslash\Hb \to \C$ be a continuous function, we define
\begin{align} \label{eq:trrealdef}
\tr_{m,h}(f) \coloneqq \frac{1}{2\pi} \sum_{Q \in \mathcal{Q}_{N,4Nm,h} \slash \Gamma_0(N)} \int_{\Gamma_0(N)\backslash S_Q} f(z) \frac{\dm z}{Q(z,1)},
\end{align}
where the integral has to be regularized whenever $m = \square$. This is described in \cite{brufuim}*{Subsection 3.4}, and we follow their discussion here.

\subsection{Theta functions and theta lifts}
Let $\mathbb{F} \coloneqq \left\{\tau \in \Hb \colon -\frac{1}{2} \leq u < \frac{1}{2}, \vt{\tau} \geq 1\right\}$ be standard fundamental domain for $\slz$, and 
\begin{align*}
\mathbb{F}^a \coloneqq \bigcup_{j=0}^{a-1} \left(\begin{matrix} 1 & j \\ 0 & 1\end{matrix}\right) \mathbb{F}.
\end{align*}
Recall that $\sigma_{\af}$ denotes a scaling matrix of the cusp $\af$ and $\alpha_{\af}$ is the width of $\af$. Then,
\begin{align*}
\mathbb{F}(N) \coloneqq \bigcup_{\af \in \left(\Q \cup \{i\infty\}\right) \slash \Gamma_0(N)} \sigma_{\af} \mathbb{F}^{\alpha_{\af}}
\end{align*}
is a fundamental domain for $\Gamma_0(N)$, which we truncate at height $T > 0$ by letting
\begin{align} \label{eq:funddomtrunc}
\mathbb{F}_T(N) \coloneqq \left\{\tau \in \mathbb{F}(N) \colon v \leq T\right\}, \qquad \mathbb{F}_T^a(N) \coloneqq \left\{\tau \in \mathbb{F}^a \colon v \leq T\right\}.
\end{align}

Let $\varphi_0(X,\tau,z)$ be the standard Gaussian as in \cite{brufuim}*{Subsection 5.1}, and define
\begin{align} \label{eq:thetacompdef}
\theta_{h}(\tau,z,\varphi_0) = \sum_{X \in h + L} \varphi_0(X,\tau,z).
\end{align}
The Siegel theta kernel is given by
\begin{align} \label{eq:Thetakerneldef}
\Theta_L\left(\tau,z,\varphi_0\right) \coloneqq \sum_{h\in L'\slash L} \theta_{h}(\tau,z,\varphi_0) \ef_{h}
\end{align}
It has weight $0$ as a function of $z$ and weight $\frac{1}{2}$ with respect to $\rho_L$ as a function of $\tau$, see \cite{bor98}*{Theorem 4.1}. Consequently, it gives rise to the theta lift (\cite{brufuim}*{Definition 5.6})
\begin{align} \label{eq:ThetaLiftDefn}
I^{\reg}(\tau,f) \coloneqq \CT_{\sfr=0} \left[\sum_{\af \in \{0,i\infty\}} \lim_{T \to \infty} \int_{\mathbb{F}_T(p)^{\alpha_{\af}}} f(\sigma_{\af}z) \Theta_L(\tau,\sigma_{\af}z) y^{-\sfr} \frac{\dm x \dm y}{y^2}\right],
\end{align}
where $f$ is a scalar-valued weight $0$ weak Maass form. The regularization is described in the paragraph surrounding \cite{brufuim}*{(1.16)}, and goes back to Borcherds \cite{bor98} and Harvey--Moore \cite{hamo}. See \cite{brufuim}*{Subsection 5.4} for a detailed discussion.

Following \cite{brufuim}*{(5.4)}, the notation 
\begin{align} \label{eq:ThetaLiftComponents}
I_h^{\reg}(\tau,f) \coloneqq \CT_{\sfr=0} \left[\sum_{\af \in \{0,i\infty\}} \lim_{T \to \infty} \int_{\mathbb{F}_T(p)^{\alpha_{\af}}} f(\sigma_{\af}z) \theta_h(\tau,\sigma_{\af}z,\varphi_0) y^{-\sfr} \frac{\dm x \dm y}{y^2}\right],
\end{align}
refers to the individual components of the vector-valued theta lift $I^{\reg}$ throughout.

Analogously, the Millson theta lift is given by
\begin{align} \label{eq:ThetaLiftMillsonDef}
J^{\reg}(\tau, f) \coloneqq \lim_{T \to \infty} \int_{\mathbb{F}_T(p)} f(z) \Theta_L\left(\tau,z,\varphi_{\text{KM}}\right),
\end{align}
where $f$ is a weight $0$ weak Maass form and $\varphi_{\text{KM}}$ is the Kudla--Millson Schwartz form (see \cite{brufuim}*{Remark 5.3} for example). 

Lastly, for an isotropic line $\lf \in V$, the space $\lf^{\perp} \slash \lf$ is a unary positive quadratic space with quadratic form $Q$, which in turn yields the even lattice
\begin{align} \label{eq:unarylattice}
K_{\lf} \coloneqq \left(L \cap \lf^{\perp}\right) \slash \left(L \cap \lf\right) \subseteq \lf^{\perp} \slash \lf
\end{align}
with dual lattice
\begin{align*}
K_{\lf}' = \left(L' \cap \lf^{\perp}\right) \slash \left(L' \cap \lf\right).
\end{align*}
This construction gives rise to the vector-valued holomorphic theta function
\begin{align*}
\Theta_{K_\lf} \coloneqq \sum_{\lambda \in K_{\lf}'} e^{2\pi i Q(\lambda)\tau} \ef_{\lambda+K_{\lf}} \in M_{\frac{1}{2}, K_{\lf}},
\end{align*}
whose components are $\theta_{K_{\lf},\tilde{h}}(\tau)$ for $\tilde{h} \in K_{\lf}' \slash K_{\lf}$. In turn, these components yield the vector-valued holomorphic modular form
\begin{align} \label{eq:Thetatildedef}
\tilde\Theta_{K_\lf}(\tau) \coloneqq \sum_{\substack{h \in L'\slash L \\ h \perp \lf}} \theta_{K_{\lf},\tilde{h}}(\tau) \ef_h \in M_{\frac{1}{2}, L}
\end{align}
for $\rho_L$, see \cite{brufuim}*{(2.5)}. Following Bruinier, Funke and Imamo\={g}lu, we let $b_{\lf}(m,h)$ be the $(m,h)$-th Fourier coefficient of $\tilde\Theta_{K_\lf}$. 

\subsection{Vector-valued lifts of weight \texorpdfstring{$0$}{0} weak Maass forms}

Bruinier, Funke and Imamo\={g}lu define the special function (incorporating a correction pointed out by Ahlgren, Andersen and Samart \cite{alansa}*{Section 6})
\begin{align} \label{eq:FcSpecialdef}
\Fc(t) \coloneqq \log(t) - \sqrt{\pi} \int_0^t e^{w^2} \erfc(w) \dm w  + \log(2) + \frac{1}{2} \gamma,
\end{align}
where 
\begin{align*}
\erfc(w) = \frac{2}{\sqrt{\pi}} \int_w^{\infty} e^{-t^2} \dm t
\end{align*}
is the complementary error function. This function is Duke, Imamo\={g}lu and T\'{o}th's \cite{dit11annals}*{p.\ 952} special function $\alpha$ (in our normalization from equation \eqref{eq:alphaspecialdef}) essentially.
\begin{lemma}[\cite{alansa}*{Section 6}] \label{lem:specialfunctionrelation}
We have
\begin{align*}
-2\Fc\left(2\sqrt{\pi N v} m\right) = \alpha\left(4Nm^2v\right)
\end{align*}
\end{lemma}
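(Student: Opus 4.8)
The plan is to prove the identity
\[
-2\Fc\left(2\sqrt{\pi N v}\, m\right) = \alpha\left(4Nm^2 v\right)
\]
by transforming both sides into a common closed form, working from the explicit integral definitions \eqref{eq:FcSpecialdef} and \eqref{eq:alphaspecialdef}. First I would simplify notation by setting $y \coloneqq 4Nm^2 v$, so that $2\sqrt{\pi N v}\,m = \sqrt{\pi y}$, and the claim becomes $-2\Fc(\sqrt{\pi y}) = \alpha(y)$. This reduces the statement to a single-variable identity and removes the level $N$ and index $m$ from the picture entirely, confirming that the result is really an analytic fact about the two special functions rather than anything arithmetic.

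Next I would work on the right-hand side $\alpha(y) = \sqrt{y}\int_0^\infty \frac{\log(t+1)}{\sqrt{t}} e^{-\pi y t}\,\dm t$. The natural move is to integrate by parts, differentiating $\log(t+1)$ and integrating the remaining $t^{-1/2} e^{-\pi y t}$, in order to replace the $\log$ by something whose antiderivative is an error function. Alternatively, one substitutes $t \mapsto t^2$ to convert the $t^{-1/2}$ into a clean Gaussian factor, after which the integral $\int_0^\infty \log(t^2+1) e^{-\pi y t^2}\,\dm t$ is amenable to differentiating under the integral sign with respect to a parameter (introducing $\log(t^2+a)$ and differentiating in $a$). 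Either route should express $\alpha(y)$ in terms of the complementary error function $\erfc$ and elementary terms ($\log y$, $\log 2$, $\gamma$), because the $a$-derivative produces $\int_0^\infty \frac{e^{-\pi y t^2}}{t^2+a}\,\dm t$, a standard integral evaluating to a multiple of $e^{\pi y a}\erfc(\sqrt{\pi y a})$.

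For the left-hand side I would use the definition directly: $-2\Fc(\sqrt{\pi y}) = -2\log(\sqrt{\pi y}) + 2\sqrt{\pi}\int_0^{\sqrt{\pi y}} e^{w^2}\erfc(w)\,\dm w - 2\log 2 - \gamma$. The goal is then to show the $\erfc$-integral here matches the $\erfc$-expression produced from $\alpha$ in the previous step. I expect the key technical identity to be a closed evaluation of $\int_0^{x} e^{w^2}\erfc(w)\,\dm w$, which can be obtained by integrating by parts (using $\frac{\dm}{\dm w}\erfc(w) = -\frac{2}{\sqrt\pi}e^{-w^2}$) and recognizing the resulting pieces, or by differentiating both sides in $x$ and checking the value at $x=0$. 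Matching constants ($\log$, $\log 2$, and $\gamma$ terms) at the end will pin down that the normalizations in \eqref{eq:FcSpecialdef} and \eqref{eq:alphaspecialdef} are consistent, which is presumably exactly why the correction from \cite{alansa}*{Section 6} was needed.

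The main obstacle will be the careful bookkeeping of the additive constants and the branch/convergence issues: the integral defining $\Fc$ involves $e^{w^2}\erfc(w)$, whose integrand grows in its exponential factor but is tamed by the $\erfc$ decay, so one must be attentive to the regime where the antiderivative is valid and to the constant of integration fixed by the value at $0$. The analytic manipulations (integration by parts, parameter differentiation, evaluating the Gaussian-over-quadratic integral) are each routine, but reconciling the $\frac{1}{2}\gamma$ in $\Fc$ against the constant emerging from $\alpha$ — together with the factors of $2$, $\sqrt\pi$, and $\log 2$ — is where the real content of the lemma lies, and is the step I would verify most carefully against the normalization chosen in equation \eqref{eq:alphaspecialdef}.
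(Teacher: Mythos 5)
Your approach is correct in substance, but note first that the paper does not prove this lemma at all: it is quoted from Ahlgren--Andersen--Samart \cite{alansa}*{Section 6}, so the paper offers nothing to compare against and any verification is genuinely additional content. Your primary route does go through, and the constants check out. Writing $y = 4Nm^2v$ and substituting $t = u^2$ gives $\alpha(y) = 2\sqrt{y}\int_0^\infty \log\left(u^2+1\right)e^{-\pi y u^2}\,\dm u$; introducing the parameter $a$ in $\log\left(u^2+a\right)$ and differentiating under the integral produces $\int_0^\infty \frac{e^{-\pi y u^2}}{u^2+a}\,\dm u = \frac{\pi}{2\sqrt{a}}\,e^{\pi y a}\erfc\left(\sqrt{\pi y a}\right)$, and integrating back over $0 \le a \le 1$ (substituting $w = \sqrt{\pi y a}$) yields
\begin{align*}
\alpha(y) = -\gamma - \log(4\pi y) + 2\sqrt{\pi}\int_0^{\sqrt{\pi y}} e^{w^2}\erfc(w)\,\dm w,
\end{align*}
which equals $-2\Fc\left(\sqrt{\pi y}\right) = -\log(\pi y) - 2\log 2 - \gamma + 2\sqrt{\pi}\int_0^{\sqrt{\pi y}} e^{w^2}\erfc(w)\,\dm w$ on the nose, since $\log 4 = 2\log 2$. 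Two points deserve correction or emphasis. First, your suggested ``key technical identity,'' a closed evaluation of $\int_0^x e^{w^2}\erfc(w)\,\dm w$, would not pan out: that integral has no elementary closed form (integration by parts only trades it for $\erfi$-type integrals). But it is also unnecessary, because in the parameter-differentiation route the $\erfc$-integral appears with the identical coefficient $2\sqrt{\pi}$ on both sides and cancels, so only elementary constants must be matched. Second, the place where $\gamma$ and $\log 2$ actually enter is the boundary term at $a \to 0^+$, namely $2\int_0^\infty \log(u)\,e^{-\pi y u^2}\,\dm u = -\frac{\sqrt{\pi}}{2\sqrt{\pi y}}\left(\gamma + \log(4\pi y)\right)$; your sketch gestures at ``elementary terms'' but this evaluation is the one step you must carry out explicitly, as it is exactly what reconciles the $\frac{1}{2}\gamma$ and $\log(2)$ in the corrected normalization \eqref{eq:FcSpecialdef} of \cite{alansa} with the normalization of $\alpha$ in \eqref{eq:alphaspecialdef}.
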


Let $Z(m,h)$ be the Heegner divisor of index $(m,h)$ (see \cite{brufuim}*{p.\ 55}), and 
\begin{align} \label{eq:BFIvol}
\mathrm{vol}(M) \coloneqq -\frac{1}{2\pi} \int_M \dm\mu(z), \qquad \dm\mu(z) \coloneqq \frac{\dm x \dm y}{y^2}.
\end{align}
Given $X \in L'$, let $c(X)$ be the geodesic corresponding to $S_Q$ via the isomorphism \eqref{eq:quadraticformsidentification}.
\begin{lemma}[\cite{brufuim}*{Theorem 4.2}] \label{lem:bfimain2}
Let $h \in L'\slash L$. Define $k_{\lf}$ by $\lf \cap (L+h) = \Z \beta_{\lf} u_{\lf} + k_{\lf}u_{\lf}$ and $0 \leq k_{\lf} < \beta_{\lf}$. Setting $\psi(0) \coloneqq -\gamma$, the function
\begin{align*}
I_h^{\reg}(\tau,1) \coloneqq &-2 \sqrt{v} \mathrm{vol}\left(\Gamma_0(N)\backslash\Hb\right) \delta_{h,0} + \sum_{m<0} \deg Z(m,h)  \frac{\erfc(2\sqrt{\pi \vt{m}v})}{2\sqrt{\vt{m}}} q^m \\
& + \sum_{\substack{m>0\\ m \notin N (\Q^{\times})^2} } \left(\sum_{X \in \Gamma \backslash L_{m,h}} \mathrm{length}(c(X)) \right) q^m + \sum_{m>0} \tr_{Nm^2,h}(1) q^{Nm^2} \\
& - \frac{1}{\sqrt{N}\pi}  \sum_{\lf \in \Gamma \backslash \Iso(V)} \widehat{\varepsilon}_{\lf} \sum_{m>0} b_{\lf}\left(Nm^2,h\right) \Fc\left(2 \sqrt{\pi v N}m\right) q^{Nm^2} \\
& - \frac{1}{2\sqrt{N}\pi} \delta_{h,0}  \sum_{\lf \in \Gamma \backslash \Iso(V)} \widehat{\varepsilon}_{\lf}  \left[\log\left(4 \beta_{\lf}^2 \pi v\right) + \gamma + \psi\left(\frac{k_{\lf}}{\beta_{\lf}}\right) + \psi\left(1-\frac{k_{\lf}}{\beta_{\lf}}\right)\right]
\end{align*}
defines the $h$-component of a weak Maass form for $\rho_L$ of weight $1/2$.
\end{lemma}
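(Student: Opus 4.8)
The plan is to evaluate the regularized integral \eqref{eq:ThetaLiftComponents} directly by unfolding the Siegel theta kernel against the trivial input $f \equiv 1$ and organizing the resulting lattice sum according to the sign of the quadratic form $Q$. First I would substitute $f \equiv 1$, reducing the task to computing
\begin{align*}
\CT_{\sfr=0} \left[\sum_{\af \in \{0,i\infty\}} \lim_{T \to \infty} \int_{\mathbb{F}_T(p)^{\alpha_{\af}}} \sum_{X \in h+L} \varphi_0(X,\tau,\sigma_{\af}z) \, y^{-\sfr} \dm\mu(z)\right]
\end{align*}
and then reading off the Fourier expansion in $\tau$. Because $Q$ has signature $(2,1)$, the coset $h+L$ splits into the zero vector, the negative-norm vectors, the positive-norm vectors, and the nonzero isotropic vectors; each stratum contributes exactly one block of the claimed expansion, and the Gaussian $\varphi_0$ decays rapidly enough that only the isotropic directions and the zero vector force the regularization.

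The negative-norm vectors are treated by the standard unfolding against their finite stabilizers: each $\Gamma$-orbit in $L_{m,h}$ with $m<0$ corresponds to a Heegner point, the Gaussian $\varphi_0$ integrates over the associated point to an incomplete $\Gamma$-value, and summing over orbits produces the factor $\deg Z(m,h)$ together with the $\erfc$-term. For positive non-square norm $m \notin N(\Q^{\times})^2$ the stabilizer $\Gamma_Q$ is infinite cyclic, so unfolding the integral along the associated closed geodesic $c(X)$ yields the $\mathrm{length}(c(X))$-terms. The positive square-norm vectors give geodesics running into the cusps; here the integral must be regularized, and the contribution splits into the regularized real-quadratic trace $\tr_{Nm^2,h}(1)$ plus a boundary piece which, after matching the incomplete-$\Gamma$ asymptotics, assembles into the special function $\Fc$ of \eqref{eq:FcSpecialdef} weighted by the coefficients $b_{\lf}(Nm^2,h)$ of the auxiliary rank-one theta function $\tilde\Theta_{K_\lf}$.

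The main obstacle is the zero vector together with the nonzero isotropic vectors and the regularization at each cusp. For the zero vector (present only when $h=0$) one gets $\varphi_0(0,\tau,\sigma_{\af}z)$, which is independent of $z$ and integrates against $\dm\mu$ to the volume term $-2\sqrt{v}\,\mathrm{vol}(\Gamma_0(p)\backslash\Hb)\delta_{h,0}$. The nonzero isotropic vectors concentrate near the cusps, so for each $\lf \in \Iso(V)$ I would apply the scaling matrix $\sigma_{\lf}$, use the orthogonal splitting realizing the lattice $K_\lf$ of \eqref{eq:unarylattice}, and compare the partial theta sum along $\lf$ with $\Theta_{K_\lf}$. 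The integral over the truncated cusp neighborhood diverges both as $T\to\infty$ and as $\sfr\to0$; isolating and discarding the pole via $\CT_{\sfr=0}$ while tracking the finite remainder through the Fourier expansion of the kernel at the cusp is precisely what produces the delicate constant block
\begin{align*}
-\frac{1}{2\sqrt{p}\pi}\delta_{h,0}\sum_{\lf \in \Gamma\backslash\Iso(V)} \widehat{\varepsilon}_{\lf}\left[\log\left(4\beta_{\lf}^2\pi v\right)+\gamma+\psi\left(\frac{k_\lf}{\beta_\lf}\right)+\psi\left(1-\frac{k_\lf}{\beta_\lf}\right)\right].
\end{align*}
The digamma values $\psi(k_\lf/\beta_\lf)$ and $\psi(1-k_\lf/\beta_\lf)$ arise from the Hurwitz-type zeta function attached to the shift $k_\lf/\beta_\lf$ of the isotropic sublattice, with the convention $\psi(0)=-\gamma$ covering the cusp $\lf=\lf_0$ where the shift vanishes. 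Verifying that the surviving terms carry the weight $\tfrac12$ transformation law under $\rho_L$, so that the assembled series is a weak Maass form, is then a matter of recognizing each block as the corresponding part of a vector-valued harmonic object.
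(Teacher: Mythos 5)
The paper does not prove this statement at all: Lemma \ref{lem:bfimain2} is imported verbatim from Bruinier--Funke--Imamo\={g}lu \cite{brufuim}*{Theorem 4.2}, restated with the corrections of Ahlgren--Andersen--Samart \cite{alansa}*{Section 6} (reflected in the definition \eqref{eq:FcSpecialdef} of $\Fc$). Your proposal, by contrast, re-derives the formula by evaluating the regularized integral \eqref{eq:ThetaLiftComponents} directly, and the strategy you outline --- stratify $h+L$ by the type of $Q(X)$; unfold the negative-norm vectors (Heegner points, giving $\deg Z(m,h)$ and the $\erfc$-terms) and the positive non-square-norm vectors (closed geodesics, giving the length terms); regularize the positive square-norm and isotropic strata at the cusps; let the zero vector produce the volume term --- is in substance the proof given in \cite{brufuim} itself. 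So your route is the natural one, but it parallels the source paper rather than this paper, whose contribution here is only the citation and the bookkeeping of the corrections.

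That said, what you have written is a plan rather than a proof, and the gap is not merely one of routine detail: the entire content of the lemma resides in the explicit constants, and that is exactly where care is required, since BFI's published Theorem 4.2 is wrong as stated and the version being proved here is the corrected one. Your sketch asserts that the boundary contributions of the square-norm stratum ``assemble into'' the special function $\Fc$ and that the isotropic stratum produces the digamma block; neither claim can be certified without actually carrying out the cusp-neighborhood computation, and it is precisely in these terms that the original authors slipped (compare \cite{alansa}*{Section 6} and Lemma \ref{lem:specialfunctionrelation}) --- a blind derivation at this level of detail cannot distinguish the corrected statement from the uncorrected one. Two further points. First, your opening claim that only the isotropic directions and the zero vector force the regularization contradicts your own (correct) later statement that the positive square-norm vectors also require it: their geodesics run into the cusps, so that stratum is likewise non-integrable over $\mathbb{F}(N)$. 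Second, the closing modularity argument is backwards: the weight $\frac{1}{2}$ transformation law in $\tau$ is inherited from the theta kernel (\cite{bor98}*{Theorem 4.1}, as cited in the paper) before any Fourier expansion is taken, whereas ``recognizing each block as the corresponding part of a vector-valued harmonic object'' cannot work, since the individual blocks of the expansion are not separately modular.
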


Let $L$, $Q$ be as in (resp.\ after) equation \eqref{eq:latticedef} and $\mathrm{Mp}(\Z)_{\infty}$ be the subgroup of $\mathrm{Mp}(\Z)$ generated by $\widetilde{T}$. We define
\begin{align} \label{eq:vvEisDef}
P_{0,0}(\tau,s) \coloneqq \frac{1}{2}\sum_{(\gamma, \phi) \in \mathrm{Mp}(\Z)_\infty \backslash \mathrm{Mp}(\Z)}
\left[ v^{s-\frac{1}{4}} \ef_0\right] \Big\vert_{\frac{1}{2},L}(\gamma,\phi), \qquad \re(s) > \frac{3}{4}.
\end{align}
Note that $P_{0,0}$ takes values in $\C[L'\slash L]$, and is investigated in \cite{alfesthesis}*{Appendix A}. The Fourier expansion of the vector-valued half integral weight Maass--Poincar{\'e} series can be found in \cite{garthwaite}. Noting that $G_0(\tau,s) = \frac{1}{2}\Fc_{0,p}(\tau,s)$, we conclude by citing the following result.
\begin{lemma}{\cite{brufuim}*{Theorem 6.2}} \label{lem:BFIThm6.2}
If $\re(s) > 1$ then
\begin{align*}
I^{\reg}\left(\tau,\Fc_{0,p}(\cdot,s)\right) = p^{\frac{1-s}{2}} \zeta^*(s) P_{0,0}\left(\tau,\frac{s}{2}+\frac{1}{4}\right).
\end{align*}
\end{lemma}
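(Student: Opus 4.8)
The plan is to evaluate the regularized Siegel theta lift of the weight $0$ Eisenstein series by Rankin--Selberg unfolding. First I would recombine the cuspidal contributions in the definition \eqref{eq:ThetaLiftDefn}: undoing the scaling matrices $\sigma_{\af}$ in each piece and using that both $\Fc_{0,p}(\cdot,s)$ and the Siegel theta kernel are $\Gamma_0(p)$-invariant as functions of $z$ (the kernel has weight $0$ in $z$ by \cite{bor98}*{Theorem 4.1}), so that the sum over $\af \in \{0,i\infty\}$ reassembles into a single regularized integral
\[
I^{\reg}(\tau, \Fc_{0,p}(\cdot,s)) = \CT_{\sfr=0}\,\lim_{T\to\infty}\int_{\mathbb{F}_T(p)} \Fc_{0,p}(z,s)\,\Theta_L(\tau,z,\varphi_0)\,y^{-\sfr}\,\dm\mu(z).
\]

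Next I would unfold. Since $\re(s) > 1$, I may write $\Fc_{0,p}(z,s) = \sum_{\gamma\in(\Gamma_0(p))_\infty\backslash\Gamma_0(p)} \im(\gamma z)^s$ and, using the $\Gamma_0(p)$-invariance of $\Theta_L$ in $z$, collapse the fundamental-domain integral to an integral over the strip $(\Gamma_0(p))_\infty\backslash\Hb$. With the regulator retained this becomes
\[
\CT_{\sfr=0}\int_0^\infty y^{s-2-\sfr}\Big(\int_0^1 \Theta_L(\tau,z,\varphi_0)\,\dm x\Big)\,\dm y,
\]
the inner integral extracting the zeroth Fourier coefficient in $x$ of each component $\theta_h(\tau,z,\varphi_0)$. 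This constant term is computed directly from the lattice $L$ in \eqref{eq:latticedef} and the Gaussian $\varphi_0$: only the lattice vectors in the rank-one piece perpendicular to the split hyperbolic plane contribute a non-oscillating term, leaving a one-dimensional theta-type sum multiplied by an explicit Gaussian profile in $y$ and $v$.

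The remaining integral is then a Mellin transform in $y$ of that profile, convergent for the shifted exponent and continued via $\CT_{\sfr=0}$. Evaluating it produces a $\Gamma$-factor together with a Dirichlet series over the surviving vectors; the factor of $p$ in the entries of $L$ contributes the normalizing power $p^{\frac{1-s}{2}}$, and completing the Dirichlet series against its $\Gamma$-factor assembles exactly the completed zeta function $\zeta^*(s)$. The spectral shift is dictated by the intertwining of Laplacians across the kernel: $\Delta_{\frac12}$ in $\tau$ acts as $\tfrac14\Delta_0$ in $z$, so the eigenvalue $s(1-s)$ of $\Fc_{0,p}(\cdot,s)$ matches $\tfrac14$ times the eigenvalue $(s'-\tfrac14)(\tfrac34-s')$ of the seed $v^{s'-\frac14}$ of $P_{0,0}(\cdot,s')$ precisely when $s' = \tfrac s2 + \tfrac14$. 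Recognizing the resulting $\C[L'/L]$-valued series as the metaplectic Eisenstein series \eqref{eq:vvEisDef} then yields the identity, the normalization conventions (including the factor relating $\Fc_{0,p}$ to BFI's $G_0 = \tfrac12\Fc_{0,p}$ and the factor $\tfrac12$ in \eqref{eq:vvEisDef}) being reconciled by the leading Fourier coefficient.

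The main obstacle is the constant-tracking in these last two steps: extracting the zeroth $x$-coefficient of the theta kernel cleanly and evaluating the Mellin transform while keeping every normalization, so that the constants coalesce into precisely $p^{\frac{1-s}{2}}\zeta^*(s)$ and the index of $P_{0,0}$ emerges as $\tfrac s2+\tfrac14$ rather than off by a harmless constant. A secondary point needing care is legitimacy: the unfolding requires $\re(s) > 1$ for convergence of the Eisenstein series and the regulator $y^{-\sfr}$ to control the integral at the cusps, after which the identity, valid on $\re(s) > 1$, is all that is asserted.
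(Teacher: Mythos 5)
You should know at the outset that the paper does not prove this lemma at all: it is imported verbatim from \cite{brufuim}*{Theorem 6.2}, the only in-paper content being the normalization remark $G_0(\tau,s) = \frac{1}{2}\Fc_{0,p}(\tau,s)$ relating the scalar Eisenstein series of the two papers, together with the convention \eqref{eq:vvEisDef} for $P_{0,0}$. So your proposal is really measured against the proof in the cited source, and in broad strokes it reproduces it: Bruinier--Funke--Imamo\={g}lu likewise unfold the $\Gamma_0(p)$-sum defining the weight $0$ Eisenstein series against the $\Gamma_0(p)$-invariant kernel, reduce to a Mellin transform in $y$ of the constant term of $\Theta_L$ along the cusp, and assemble $\zeta^*(s)$ times the vector-valued Eisenstein series; your eigenvalue bookkeeping $\left(s'-\frac{1}{4}\right)\left(\frac{3}{4}-s'\right) = \frac{1}{4}s(1-s)$ at $s' = \frac{s}{2}+\frac{1}{4}$, which pins the spectral shift, is correct.

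Two of your steps are wrong or incomplete as literally written, however, and they are precisely where the work lies. First, the constant-term extraction: it is false that ``only the lattice vectors in the rank-one piece perpendicular to the split hyperbolic plane contribute a non-oscillating term.'' For $X \in L'$ with coordinates $(a,b,c)$ as in \eqref{eq:latticedualdef}, the $x$-dependence of the Gaussian $\varphi_0(X,\tau,z)$ enters through the pairing of $X$ with the point $z$, which is (a multiple of) $\left(Na\vt{z}^2+2Nbx-c\right)/y$; this is $x$-independent only when $a=b=0$, i.e.\ for $X$ on the isotropic line $\lf_0$ itself, and in particular \emph{not} for the vectors $(0,b,0)$ spanning the positive-definite rank-one piece. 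Moreover every other vector still has a nonvanishing $x$-average, so the zeroth $x$-coefficient is not a support-restricted subsum at all. The correct mechanism is Poisson summation along the isotropic direction (Borcherds' reduction of $\Theta_L$ to the unary theta series $\tilde\Theta_{K_{\lf}}$), after which the $x$-integral genuinely collapses and the ``one-dimensional theta sum times Gaussian profile'' you want actually appears. Second, the unfolding itself: the regulator in \eqref{eq:ThetaLiftDefn} is attached cusp by cusp in local coordinates, so your recombined integral with a single global $y^{-\sfr}$ is not the definition; and since the kernel grows linearly in $y$ at each cusp (with coefficient essentially $\tilde\Theta_{K_{\lf}}$), the unfolded integrand has size $y^{s-1}$ and diverges at $y=\infty$ for every $s$ in the stated range. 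The unfolded identity therefore only holds termwise after analytic continuation in $\sfr$, and checking that this agrees with $\CT_{\sfr=0}$ of the truncated integrals is the main analytic content of \cite{brufuim}*{Sections 5--6}. Neither issue derails the strategy --- both are resolved in the cited source --- but as written your middle step would fail, and a self-contained write-up must route it through the reduction formula rather than a frequency-support argument.
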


\section{The proof of Theorem \ref{thm:cfsquareindices}} \label{sec:ProofOfThm1.3}

We recall $\sigma_{N,s}$ from equation \eqref{eq:divisorsums} as well as our notation 
\begin{align} \label{eq:TNschidef}
T_{N,s}^{\chi}(n) \coloneqq \sum_{\substack{0 < d \mid n \\ \gcd(d,N)=1}} \mu(d)\chi(d)d^{s-1}\sigma_{N,2s-1}\left(\frac{n}{d}\right)
\end{align}
from \cite{bemo}. We define
\begin{align} \label{eq:t(n)def}
\tf_{4p}(m) &\coloneqq -\frac{1}{2}\frac{\partial}{\partial s} T_{4p,\frac{3}{2}-2s}^{\id}(m) \Big\vert_{s=\frac{3}{4}}
= \sum_{\substack{0 < d \mid m \\ \gcd(d,4p)=1}} \frac{\mu(d)}{d} \sum_{\substack{0 < r \mid \frac{m}{d} \\ \gcd(r,4p)=1}} \frac{\log(d)+2\log(r)}{r}.
\end{align}

\subsection{The case of level \texorpdfstring{$4$}{4}}
We determine the square-indexed Fourier coefficients in the Fourier expansion of $\Ec$. That is, we evaluate
\begin{align*}
\frac{1}{3} \frac{\partial}{\partial s} \left[ \left(s-\frac{3}{4}\right) \Ks_{\frac{1}{2},4}^+(0,n;2s) \right] \Bigg\vert_{s=\frac{3}{4}}
\end{align*}
with $0 \leq n = \square$. We recall that the constant terms of $\Ec$ are given by
\begin{align} \label{eq:DITconstantterms}
\frac{1}{3}\frac{\partial}{\partial s} \left(s-\frac{3}{4}\right)\Fc_{\frac{1}{2},4}^+(\tau,s)\Big\vert_{s=\frac{3}{4}} &= \frac{\sqrt{v}}{3} - \frac{1}{4\pi} \log(v) + \frac{1}{\pi} \left(\gamma - \log(4) - \frac{\zeta'(2)}{\zeta(2)}\right) + \ldots.
\end{align}
To see this, we cite \cite{dit11annals}*{Proposition 2, equation (2.24)}, namely the Fourier expansion
\begin{align*}
\hspace*{\leftmargini} \Fc_{\frac{1}{2},4}^+(\tau,s) = v^{s-\frac{1}{4}} + \frac{2^{2s-\frac{1}{2}}}{(2s-1)\Gamma\left(2s-\frac{1}{2}\right)} \left(\pi^\frac{1}{2} 2^{\frac{5}{2}-6s}\Gamma(2s)\frac{\zeta(4s-2)}{\zeta(4s-1)}\right) v^{\frac{3}{4}-s} + \ldots,
\end{align*}
from which equation \eqref{eq:DITconstantterms} follows by a short computation in accordance with \cite{alansa}*{(1.7)}.

\begin{proof}[Proof of Theorem \ref{thm:cfsquareindices} {\rm (i)}]
We suppose that $1 \leq n = m^2$. According to \cite{andu}*{Proposition 5.7}, we have
\begin{align*}
\Ks_{\frac{1}{2},4}^+\left(0,m^2;2s\right) &= 2^{\frac{3}{2}-4s} e^{2\pi i \frac{1}{8}} m^{2-4s} \frac{\zeta\left(2s-\frac{1}{2}\right)}{\zeta(4s-1)} \sum_{r \mid m} \mu(r) r^{2s-\frac{3}{2}} \sigma_{4s-2}\left(\frac{m}{r}\right) \\
&= \frac{1+i}{2^{4s-1}} \frac{\zeta\left(2s-\frac{1}{2}\right)}{\zeta(4s-1)} \sum_{r \mid m} \mu(r) r^{\frac{1}{2}-2s} \sigma_{2-4s}\left(\frac{m}{r}\right),
\end{align*}
see \cite{bemo}*{Proposition 3.4} (ii) as well.

Including the prefactors in the expansion from Lemma \ref{lem:Eisensteinexpansions} (iii), we inspect
\begin{align*}
\bff\left(m^2\right) = \frac{1}{3} \cdot \frac{2}{3} \cdot \frac{\partial}{\partial s} \left[ \left(s-\frac{3}{4}\right) \frac{\pi^{s+\frac{1}{4}}}{2^{4s-2}} \frac{\zeta\left(2s-\frac{1}{2}\right)}{\zeta(4s-1)} \sum_{r \mid m} \mu(r) r^{\frac{1}{2}-2s} \sigma_{2-4s}\left(\frac{m}{r}\right) \right] \Bigg\vert_{s=\frac{3}{4}}
\end{align*}
in accordance with \cite{dit11annals}*{Proposition 2}. 

\begin{enumerate}
\item We have
\begin{align*}
\lim_{s \to \frac{3}{4}} \left[ \left(s-\frac{3}{4}\right)\zeta\left(2s-\frac{1}{2}\right) \right] = \frac{1}{2}, \qquad
\frac{\partial}{\partial s} \left[ \left(s-\frac{3}{4}\right)\zeta\left(2s-\frac{1}{2}\right) \right] \Bigg\vert_{s=\frac{3}{4}} = \gamma.
\end{align*}
\item We compute that
\begin{align*}
\frac{\partial}{\partial s} \left[\frac{\pi^{s+\frac{1}{4}}}{2^{4s-2}} \frac{1}{\zeta(4s-1)}\right] \Bigg\vert_{s=\frac{3}{4}} = - \frac{12}{\pi} \left(\frac{\zeta'(2)}{\zeta(2)} + \log(2) - \frac{1}{4}\log(\pi)\right).
\end{align*}
\item By \cite{bemo}*{Lemma 3.6}, we have
\begin{align*}
\lim_{s \to \frac{3}{4}} \left[\sum_{r \mid m} \mu(r) r^{\frac{1}{2}-2s} \sigma_{2-4s}\left(\frac{m}{r}\right)\right] = T_{1,0}^{\id}(m) = 1.
\end{align*}
\item We compute that
\begin{align*}
\frac{\partial}{\partial s} \left[\sum_{r \mid m} \mu(r) r^{\frac{1}{2}-2s} \sigma_{2-4s}\left(\frac{m}{r}\right) \right] \Bigg\vert_{s=\frac{3}{4}} = -2\tf_{1}(m).
\end{align*}
\end{enumerate}
Combining expressions yields the claim.
\end{proof}

\subsection{The case of level \texorpdfstring{$4p$}{4p}}

\begin{proof}[Proof of Theorem \ref{thm:cfsquareindices} {\rm (ii)} and {\rm (iii)}]
Following \cite{bemo}*{Subsection 3.4}, we let
\begin{align*}
\ff\left(m^2,s\right) \coloneqq \left(\sum_{j\geq2} \frac{a\left(2^j,m^2\right)}{2^{js}} + \frac{1+i}{2^{2s}}\right) \sum_{j\geq1} \frac{a\left(p^j,m^2\right)}{p^{js}} \begin{cases}
1 & \text{if } m = 0, \\
T_{4N,\frac{3}{2}-s}^{\id}(m) & \text{if } m \geq 1,
\end{cases}
\end{align*}
where we have used that $N=p$ is an odd prime. We prove both parts separately.

\begin{enumerate}[label={\rm (\roman*)}]
\setcounter{enumi}{1}
\item By virtue of \cite{bemo}*{(3.10)}, we have
\begin{multline*}
\hspace*{\leftmargini} \cf(0) = \left(\frac{\partial}{\partial s} \left(s-\frac{3}{4}\right) \frac{L_{4p}(4s-2,\id)}{L_{4p}(4s-1, \id)}\right) \Bigg\vert_{s=\frac{3}{4}} \ff\left(0,\frac{3}{2}\right) \\
+ \left(\lim_{s \to \frac{3}{4}^+} \left(s-\frac{3}{4}\right) \frac{L_{4p}(4s-2,\id)}{L_{4p}(4s-1, \id)}\right) \frac{\partial}{\partial s} \ff(0,2s) \Big\vert_{s=\frac{3}{4}}.
\end{multline*}
First, we observe that
\begin{align*}
a\left(2^j,0\right) &= \sum_{r=1}^{2^j} \left(\frac{2^j}{r}\right)\varepsilon_{r} = \begin{cases}
1 & \text{if } j = 1, \\
(1+i) 2^{j-2} & \text{if } j \geq 2 \text{ is even}, \\
0 & \text{if } j \geq 3 \text{ is odd},
\end{cases}
\\
a\left(p^j,0\right) &= \varepsilon_{p^j}^{-1} \sum\limits_{r=1}^{p^j} \left(\frac{r}{p^j}\right) = \begin{cases}
\varphi\left(p^j\right) & \text{if } j \geq 2 \text{ is even}, \\
0 & \text{if } j \geq 1 \text{ is odd},
\end{cases}
\end{align*}
where $\varphi$ denotes Euler's totient function. Now, we analyze each factor separately.
\begin{enumerate}
\item By \cite{bemo}*{Proposition 3.5 (ii)}, we have
\begin{align*}
\ff\left(0,\frac{3}{2}\right) = \frac{1}{p}\left(\frac{1+i}{4} + \frac{1+i}{8}\right) = \frac{3}{8p}(1+i).
\end{align*}
\item We compute that
\begin{multline*}
\hspace*{\leftmargini+\leftmarginii}
 \frac{\partial}{\partial s} \ff(0,2s) = \frac{\partial}{\partial s} \left(\sum_{j\geq2} \frac{a\left(2^j,0\right)}{2^{2js}} + \frac{1+i}{2^{4s}}\right) \sum_{j\geq1} \frac{a\left(p^j,0\right)}{p^{2js}} \\
= -2\log(2)\left(\sum_{j\geq2} \frac{a\left(2^j,0\right) j}{2^{2js}} + 2\frac{1+i}{2^{4s}}\right) \sum_{j\geq1} \frac{a\left(p^j,0\right)}{p^{2js}} \\
- 2 \log(p) \left(\sum_{j\geq2} \frac{a\left(2^j,0\right)}{2^{2js}} + \frac{1+i}{2^{4s}}\right) \sum_{j\geq1} \frac{a\left(p^j,0\right) j}{p^{2js}}.
\end{multline*}
Evaluating at $s=\frac{3}{4}$ and using \cite{bemo}*{Proposition 3.5} yields
\begin{multline*}
\hspace*{\leftmargini+\leftmarginii} 
\frac{\partial}{\partial s} \ff(0,2s) \Big\vert_{s=\frac{3}{4}} = -2\log(2) \left(\sum_{j\geq2} \frac{a\left(2^j,0\right) j}{2^{\frac{3}{2}j}} + \frac{1+i}{4}\right) \frac{1}{p} \\
- 2 \log(p) \left(\frac{1+i}{4} + \frac{1+i}{8}\right) \sum_{j\geq1} \frac{a\left(p^j,0\right) j}{p^{\frac{3}{2}j}}.
\end{multline*}
Moreover, we have
\begin{align*}
\sum_{j\geq2} \frac{a\left(2^j,0\right) j}{2^{\frac{3}{2}j}} = (1+i), \qquad
\sum_{j\geq1} \frac{a\left(p^j,0\right) j}{p^{\frac{3}{2}j}} = \frac{2}{p-1},
\end{align*}
and we infer that
\begin{align*}
\frac{\partial}{\partial s} \ff(0,2s) \Big\vert_{s=\frac{3}{4}} = -\frac{5(1+i)\log(2)}{2p} - \frac{3}{2}(1+i)\frac{\log(p)}{p-1}.
\end{align*}
\item We compute that
\begin{align*}
 \lim_{s \to \frac{3}{4}^+} \left(s-\frac{3}{4}\right) \frac{L_{4p}(4s-2,\id)}{L_{4p}(4s-1, \id)} = \frac{p}{(p+1) \pi^2}.
\end{align*}
\item Since $\frac{\partial}{\partial s} \left[ \left(s-1\right) \zeta(s) \right] \Big\vert_{s=1} = \gamma$, we obtain
\begin{multline*}
\hspace*{\leftmargini+\leftmarginii} 
\frac{\partial}{\partial s} \left(s-\frac{3}{4}\right) \frac{L_{4p}(4s-2,\id)}{L_{4p}(4s-1, \id)} \Big\vert_{s=\frac{3}{4}} \\
= \gamma \frac{4p}{\pi^2(p+1)} + \frac{1}{4} \frac{16p\left(\left(p^2-1\right)\left(\pi^2\log(4)-18\zeta'(2)\right)+3\pi^2p\log(p)\right)}{3\pi^4(p-1)(p+1)^2}.
\end{multline*}
\end{enumerate}
Combining expressions, we obtain
\begin{multline*}
\hspace*{\leftmargini} 
\cf(0) = \frac{3}{8p}(1+i) \left[\gamma \frac{4p}{\pi^2(p+1)} + \frac{1}{4} \frac{16p\left(\left(p^2-1\right)\left(\pi^2\log(4)-18\zeta'(2)\right)+3\pi^2p\log(p)\right)}{3\pi^4(p-1)(p+1)^2}\right] \\
+ \frac{p}{(p+1) \pi^2}\left[-\frac{5(1+i)\log(2)}{2p} - \frac{3}{2}(1+i)\frac{\log(p)}{p-1}\right],
\end{multline*}
which implies the claim after some simplifications.

\item Following \cite{bemo}*{(3.11)}, we have
\begin{multline*}
\hspace*{\leftmargini} 
\cf\left(m^2\right) = \left(\frac{\partial}{\partial s} \left(s-\frac{3}{4}\right) \frac{L_{4p}\left(2s-\frac{1}{2},\id\right)}{L_{4p}(4s-1, \id)}\right) \Bigg\vert_{s=\frac{3}{4}} \ff\left(m^2,\frac{3}{2}\right) \\
+ \left(\lim_{s \to \frac{3}{4}^+} \left(s-\frac{3}{4}\right) \frac{L_{4p}\left(2s-\frac{1}{2},\id\right)}{L_{4p}(4s-1, \id)}\right) \frac{\partial}{\partial s} \ff\left(m^2,2s\right) \Big\vert_{s=\frac{3}{4}},
\end{multline*}
and analyze each term separately.
\begin{enumerate}
\item By \cite{bemo}*{Proposition 3.5 (ii) and Lemma 3.6}, we have
\begin{align*}
\ff\left(m^2,\frac{3}{2}\right) = \frac{1}{p}\left(\frac{1+i}{4} + \frac{1+i}{8}\right) = \frac{3}{8p}(1+i).
\end{align*}
Since $T_{4p,0}^{\id}(m) = 1$, the result is the same as in part (i) (a) above.
\item By \cite{bemo}*{Proposition 3.5 (ii) and Lemma 3.6}, we obtain
\begin{multline*}
\hspace*{\leftmargini+\leftmarginii} 
\frac{\partial}{\partial s} \ff\left(m^2,2s\right) \Bigg\vert_{s=\frac{3}{4}} = \frac{\partial}{\partial s} \left[ \left(\sum_{j\geq2} \frac{a\left(2^j,m^2\right)}{2^{2js}} + \frac{1+i}{2^{4s}}\right) \sum_{j\geq1} \frac{a\left(p^j,m^2\right)}{p^{2js}} \right] \Bigg\vert_{s=\frac{3}{4}} \\
+  \left(\frac{1+i}{4} + \frac{1+i}{8}\right) \frac{1}{p}
\left(\frac{\partial}{\partial s} T_{4p,\frac{3}{2}-2s}^{\id}(m) \Big\vert_{s=\frac{3}{4}}\right).
\end{multline*}
By \cite{bemo}*{Proposition 3.5 (ii)}, the first summand simplifies to
\begin{multline*}
\hspace*{\leftmargini+\leftmarginii} 
\frac{\partial}{\partial s} \left[ \left(\sum_{j\geq2} \frac{a\left(2^j,m^2\right)}{2^{2js}} + \frac{1+i}{2^{4s}}\right) \sum_{j\geq1} \frac{a\left(p^j,m^2\right)}{p^{2js}} \right] \Bigg\vert_{s=\frac{3}{4}} \\
= \frac{1}{p} \frac{\partial}{\partial s} \left[ \left(\sum_{j\geq2} \frac{a\left(2^j,m^2\right)}{2^{2js}} + \frac{1+i}{2^{4s}}\right) \right] \Bigg\vert_{s=\frac{3}{4}} 
+ \left(\frac{1+i}{4} + \frac{1+i}{8}\right) \frac{\partial}{\partial s} \sum_{j\geq1} \frac{a\left(p^j,m^2\right)}{p^{2js}} \Big\vert_{s=\frac{3}{4}}.
\end{multline*}
Note that $\nu_p(m^2)$ is even for all primes $p$ and both $x^2 \equiv 3 \pmod*{4}$, $x^2 \equiv 5 \pmod*{8}$ have no integer solutions. Thus, \cite{bemo}*{(3.5), (3.6)} states that
\begin{multline*}
\hspace*{\leftmargini+\leftmarginii} 
\sum_{j\geq2} \frac{a\left(2^{j},m^2\right) }{2^{2js}} = \frac{1+i}{2^{2s+\frac{1}{2}}} \frac{2^{-(2\nu_2(m)+2)\left(2s-\frac{1}{2}\right)}}{2^{4s-1}-2} \\
\times \left[2^{\nu_2(m)+1}\left(2^{2s-\frac{1}{2}}-2\right)\left(2^{2s-\frac{1}{2}}+1\right)+2^{(2\nu_2(m)+3)\left(2s-\frac{1}{2}\right)} \right]
\end{multline*}
and
\begin{multline*}
\hspace*{\leftmargini+\leftmarginii} 
\sum_{j\geq1} \frac{a\left(p^{j},m^2\right) }{p^{2js}} = -1 + \frac{1}{p^{4s-1} - p} \left[
-p^{\nu_p(m)-2\nu_p(m)\left(2s-\frac{1}{2}\right)+(4s-1)} \right. \\ \left.
+p^{\nu_p(m)-2\nu_p(m)\left(2s-\frac{1}{2}\right)}\left(p^{4s-1} - p^{\frac{3}{2}-2s} + p^{2s-\frac{1}{2}} - p + 1\right) + p^{4s-1} - 1 \right]
\end{multline*}
for $p > 2$. We have
\begin{multline*}
\hspace*{\leftmargini+\leftmarginii}
\frac{\partial}{\partial s} \left[ \left(\sum_{j\geq2} \frac{a\left(2^j,m^2\right)}{2^{2js}} + \frac{1+i}{2^{4s}}\right) \right] \Bigg\vert_{s=\frac{3}{4}} 
= \frac{\partial}{\partial s} \Bigg\{\frac{1+i}{2^{2s+\frac{1}{2}}} \cdot \frac{2^{-(2\nu_2(m)+2)\left(2s-\frac{1}{2}\right)}}{2^{4s-1}-2} \\
\times \left[2^{\nu_2(m)+1}\left(2^{2s-\frac{1}{2}}-2\right)\left(2^{2s-\frac{1}{2}}+1\right) 
+2^{(2\nu_2(m)+3)\left(2s-\frac{1}{2}\right)} \right]
+ \frac{1+i}{2^{4s}}
\Bigg\} \Bigg\vert_{s=\frac{3}{4}} \\
= \frac{1+i}{4}\left(3\cdot2^{-\nu_2(m)}-10\right) \log(2),
\end{multline*}
and
\begin{multline*}
\hspace*{\leftmargini+\leftmarginii} 
\frac{\partial}{\partial s} \sum_{j\geq1} \frac{a\left(p^j,m^2\right)}{p^{2js}} \Big\vert_{s=\frac{3}{4}} 
= \frac{\partial}{\partial s} \left[-1 + \frac{1}{p^{4s-1} - p} \left(-p^{\nu_p(m)-2\nu_p(m)\left(2s-\frac{1}{2}\right)+(4s-1)} 
\right. \right. \\ \left. \left.
\hspace*{\leftmargini+\leftmarginii}  + p^{\nu_p(m)-2\nu_p(m)\left(2s-\frac{1}{2}\right)}\left(p^{4s-1} - p^{\frac{3}{2}-2s} + p^{2s-\frac{1}{2}} - p + 1\right) + p^{4s-1} - 1\right) \right] \Big\vert_{s=\frac{3}{4}} \\
= \frac{2(p+1)p^{-\nu_p(m)-1}\log(p)}{p-1} - \frac{4\log(p)}{p-1}.
\end{multline*}
Moreover, we have
\begin{align*}
\hspace*{\leftmargini+\leftmarginii}
 \frac{\partial}{\partial s} T_{4p,\frac{3}{2}-2s}^{\id}(m)\Big\vert_{s=\frac{3}{4}} 
= \frac{\partial}{\partial s} \sum_{\substack{0 < d \mid m \\ \gcd(d,4p)=1}} \mu(d) d^{\frac{1}{2}-2s} \sum_{\substack{0 < r \mid \frac{m}{d} \\ \gcd(r,4p)=1}} r^{2-4s} \Big\vert_{s=\frac{3}{4}} 
= -2\tf_{4p}(m).
\end{align*}

Combining, we infer
\begin{multline*}
\hspace*{\leftmargini+\leftmarginii}
 \frac{\partial}{\partial s} \ff\left(m^2,2s\right) \Big\vert_{s=\frac{3}{4}} = 
\frac{1+i}{4p}\left(3\cdot2^{-\nu_2(m)}-10\right) \log(2) \\
+ \left(\frac{1+i}{4} + \frac{1+i}{8}\right) \left(\frac{2(p+1)p^{-\nu_p(m)-1}\log(p)}{p-1} - \frac{4\log(p)}{p-1}\right)
 + \frac{3}{8p}(1+i) \left(-2 \tf_{4p}(m)\right).
\end{multline*}

\item We compute that
\begin{align*}
 \lim_{s \to \frac{3}{4}^+} \left(s-\frac{3}{4}\right) \frac{L_{4p}\left(2s-\frac{1}{2},\id\right)}{L_{4p}(4s-1, \id)} = \frac{2p}{\pi^2(p+1)}.
\end{align*}

\item We compute that
\begin{multline*}
\hspace*{\leftmargini+\leftmarginii} 
\frac{\partial}{\partial s} \left(s-\frac{3}{4}\right) \frac{L_{4p}\left(2s-\frac{1}{2},\id\right)}{L_{4p}(4s-1, \id)} \Bigg\vert_{s=\frac{3}{4}} \\
= \frac{6\pi^2\gamma-72\zeta'(2)}{\pi^4} \frac{2p}{3(p+1)} + \frac{3}{\pi^2} \left(\frac{4p\log(p)}{3(p+1)^2} + \frac{4p\log(2)}{9(p+1)}\right).
\end{multline*}
\end{enumerate}
Combining expressions, we obtain
\begin{multline*}
\hspace*{\leftmargini} 
\cf\left(m^2\right) = \frac{3}{8p}(1+i)\left[\frac{6\pi^2\gamma-72\zeta'(2)}{\pi^4} \frac{2p}{3(p+1)} + \frac{3}{\pi^2} \left(\frac{4p\log(p)}{3(p+1)^2} + \frac{4p\log(2)}{9(p+1)}\right)\right] \\
+ \frac{2p}{\pi^2(p+1)}\left[\left(\frac{1+i}{4} + \frac{1+i}{8}\right) \left(\frac{2(p+1)p^{-\nu_p(m)-1}\log(p)}{p-1} - \frac{4\log(p)}{p-1}\right) \right. \\ \left.
+ \frac{1+i}{4p}\left(3\cdot2^{-\nu_2(m)}-10\right) \log(2) + \frac{3}{8p}(1+i) \tf_{4p}(m)\right],
\end{multline*}
and simplifying yields the claim. \qedhere
\end{enumerate}
\end{proof}

\section{The proof of Theorem \ref{thm:GcAsThetaLift}} \label{sec:ProofOfThm1.1}

\subsection{Proof of part (i)}
Part (i) of Theorem \ref{thm:GcAsThetaLift} follows by the following result essentially.
\begin{prop} \label{prop:sesquiharmonicequality}
Let $p$ be prime. Let $f_1$, $f_2$ be two scalar-valued sesquiharmonic Maass forms of weight $\frac{1}{2}$ and level $4p$ in the plus-space. Suppose that $f_1(\tau) - f_2(\tau)$ is of linear exponential decay as $v \to \infty$ and of at most moderate growth towards all cusps inequivalent to $i\infty$. Then, we have $f_1 = f_2$.
\end{prop}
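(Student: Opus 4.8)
The plan is to set $g \coloneqq f_1 - f_2$ and prove $g = 0$. By hypothesis $g$ is a sesquiharmonic Maass form of weight $\frac{1}{2}$ and level $4p$ in the plus space that decays exponentially as $v \to \infty$ and grows at most polynomially towards every cusp inequivalent to $i\infty$. I would exploit the defining chain of $\xi$-operators by putting $h \coloneqq \xi_{\frac{1}{2}} g$ and $\phi \coloneqq \xi_{\frac{3}{2}} h = \xi_{\frac{3}{2}}\xi_{\frac{1}{2}} g$. Since $\xi_{\frac{1}{2}}\xi_{\frac{3}{2}}\xi_{\frac{1}{2}} g = 0$, the form $h$ is harmonic of weight $\frac{3}{2}$ and $\phi$ is weakly holomorphic of weight $\frac{1}{2}$; both again lie in the plus space of level $4p$, and because $\xi_k$ preserves exponential decay and polynomial growth, both inherit exponential decay at $i\infty$ together with moderate growth at the remaining cusps.

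The first step is to kill $\phi$. Being weakly holomorphic of weight $\frac{1}{2}$ and of moderate growth at every cusp, $\phi$ is in fact holomorphic, so $\phi \in M_{\frac{1}{2}}^+(4p)$. By the Serre--Stark classification, combined with the observation that the only unary theta series of weight $\frac{1}{2}$, level $4p$, and \emph{trivial} nebentypus is $\theta$ itself (the companion series $\theta(p\,\cdot)$ carries the quadratic character $\left(\frac{p}{\cdot}\right)$), the space $M_{\frac{1}{2}}^+(4p)$ is one-dimensional and spanned by $\theta$. Since $\phi$ decays at $i\infty$ whereas $\theta$ has a nonzero constant term there, I conclude $\phi = 0$.

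Now $\phi = 0$ has two consequences. On the one hand $\Delta_{\frac{1}{2}} g = -\xi_{\frac{3}{2}}\xi_{\frac{1}{2}} g = 0$, so $g$ is a genuine \emph{harmonic} Maass form of weight $\frac{1}{2}$ whose shadow is $\xi_{\frac{1}{2}} g = h$. On the other hand $\xi_{\frac{3}{2}} h = 0$ forces $h$ to be weakly holomorphic, hence $h \in M_{\frac{3}{2}}^+(4p)$; its exponential decay at $i\infty$ makes its constant term there vanish, and since the Eisenstein plus space $E_{\frac{3}{2}}^+(4p) = \C\Hs_{p,p}$ is faithfully detected by that constant term (recall $H_{p,p}(0) = L_p(-1,\id) \neq 0$ from \cite{peiwang}), the Eisenstein component of $h$ vanishes. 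Thus $h \in S_{\frac{3}{2}}^+(4p)$ is a cusp form.

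It remains to show $h = 0$, and here I would invoke the Bruinier--Funke pairing \cite{brufu02}. Because $g$ is harmonic, of moderate growth, and exponentially decaying at $i\infty$, it has no principal part at any cusp. Pairing the cusp form $h$ against the shadow of $g$ gives
\begin{align*}
\lVert h \rVert^2 = \langle \xi_{\frac{1}{2}} g, h \rangle = \{g, h\},
\end{align*}
whose right-hand side is the finite sum pairing the principal part of $g$ with the Fourier coefficients of $h$, and therefore vanishes. Hence $h = 0$, so $\xi_{\frac{1}{2}} g = 0$ and $g$ is weakly holomorphic of weight $\frac{1}{2}$; moderate growth again yields $g \in M_{\frac{1}{2}}^+(4p) = \C\theta$, and the exponential decay at $i\infty$ forces $g = 0$, i.e.\ $f_1 = f_2$. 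I expect the main obstacle to be the middle transition: recognizing that once $\phi = 0$ the form $g$ is promoted to a harmonic form with \emph{cuspidal} shadow, and that the growth hypotheses preclude any principal part so that the Bruinier--Funke pairing collapses to zero. The one-dimensionality of $M_{\frac{1}{2}}^+(4p)$, secured through the nebentypus of $\theta(p\,\cdot)$, is the other indispensable input.
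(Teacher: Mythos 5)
Your proposal is correct and follows essentially the same route as the paper's proof: kill $\xi_{\frac{3}{2}}\xi_{\frac{1}{2}}g$ (the paper's $-\Delta_{\frac{1}{2}}F$) via Serre--Stark and the vanishing constant term at $i\infty$, use Pei--Wang's one-dimensionality of $E_{\frac{3}{2}}^+(4p)$ and the nonvanishing constant term of its generator to force the shadow into $S_{\frac{3}{2}}^+(4p)$, collapse the Bruinier--Funke pairing $\{g,\xi_{\frac{1}{2}}g\}$ using the absence of principal parts, and finish with Serre--Stark once more. The only differences are cosmetic: you spell out why $\theta(p\,\cdot)$ is excluded by its nebentypus, which the paper leaves implicit in its citation of Serre--Stark.
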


\begin{proof}[Proof of Proposition \ref{prop:sesquiharmonicequality}]
Let $F(\tau) \coloneqq f_1(\tau) - f_2(\tau)$. By virtue of our assumptions on $f_1$ and $f_2$ (in particular the growth towards the cusps inequivalent to $i\infty$), we have
\begin{align*}
\Delta_{\frac{1}{2}}F \in M_{\frac{1}{2}}(4p).
\end{align*}
The Serre--Stark basis theorem \cite{sest} (see \cite{thebook}*{Theorem 2.8} too) implies that $\Delta_{\frac{1}{2}}F = \alpha \cdot \theta$ for some constant $\alpha\in\C$. Since $\Delta_{\frac{1}{2}}F$ has no constant term at $i\infty$ while $\theta$ equals $1$ at $i\infty$, we must have $\alpha = 0$. Hence, $F$ is a harmonic Maass form of weight $\frac{1}{2}$ and level $4p$ satisfying the plus space condition. We deduce
\begin{align*}
\xi_{\frac{1}{2}}F \in M_{\frac{3}{2}}^+(4p) = S_{\frac{3}{2}}^+(4p) \oplus E_{\frac{3}{2}}^+(4p),
\end{align*}
since $\xi_{\frac{1}{2}}F$ is holomorphic at all other cusps as well. As $p$ is prime, the Eisenstein space $E_{\frac{3}{2}}^+(4p)$ is one-dimensional and spanned by a single Eisenstein series according to the work of Pei and Wang \cite{peiwang}. In particular, this Eisenstein series does not vanish at $i\infty$. However, $F$ and therefore $\xi_{\frac{1}{2}}F$ vanish at $i\infty$, for which reason there is no contribution from $E_{\frac{3}{2}}^+(4p)$ to $\xi_{\frac{1}{2}}F$. This establishes that
\begin{align*}
\xi_{\frac{1}{2}}F \in S_{\frac{3}{2}}^+(4p).
\end{align*}
Let $c_{f,\af}(n)$ denote the $n$-th Fourier coefficient of $f$ about the cusp $\af$. We employ the Bruinier--Funke pairing and \cite{brufu02}*{Proposition 3.5} getting
\begin{align*}
\left\langle \xi_{\frac{1}{2}}F, \xi_{\frac{1}{2}}F \right\rangle = \left\{\xi_{\frac{1}{2}}F,F\right\} = \sum_{\af \in \{0,i\infty\}} \sum_{n \leq 0} c_{\xi_{\frac{1}{2}}F,\af}(-n) c_{F,\af}^+(n),
\end{align*}
where $\langle \cdot, \cdot \rangle$ is the Petersson inner product. By assumption, we have $c_{F,i\infty}^+(n) = 0$ for every $n \leq 0$, and hence
\begin{align*}
\langle \xi_{\frac{1}{2}}F, \xi_{\frac{1}{2}}F \rangle = \sum_{n \leq 0} c_{\xi_{\frac{1}{2}}F,0}(-n) c_{F,0}^+(n).
\end{align*}
Since $F$ has moderate growth at all cusps and $\xi_{\frac{1}{2}}F$ is cuspidal, we obtain
\begin{align*}
\langle \xi_{\frac{1}{2}}F, \xi_{\frac{1}{2}}F \rangle = 0,
\end{align*}
and this implies that $F\in M_{\frac{1}{2}}(4p)$ by non-degeneracy of the Petersson inner product. The claim now follows by the Serre--Stark basis theorem as outlined above.
\end{proof}

Theorem \ref{thm:GcAsThetaLift} (i) is a direct consequence of Proposition \ref{prop:sesquiharmonicequality}.
\begin{proof}[Proof of Theorem \ref{thm:GcAsThetaLift} {\rm (i)}]
The constant term of a sesquiharmonic Maass form satisfying the given conditions is a linear combination of the functions $1$, $\log(v)$, and $v^{\frac{1}{2}}$, which with the previous result implies that the space is at most three-dimensional. Since the constant terms of $\mathcal{E}, \mathcal{G}$, and $\theta$ are linearly independent, the result follows from Proposition \ref{prop:sesquiharmonicequality}. 
\end{proof}

\subsection{Spectral deformations}
The proof of Theorem \ref{thm:GcAsThetaLift} (ii) requires spectral deformations of $1$. This is motivated by the discussion in \cite{brufuim}*{Subsubsection 5.5.1, Subsection 6.4}.
\begin{lemma} \label{lem:spectraldeformations}
Let $p$ be an odd prime. The family of functions
\begin{align*}
f_s(\tau) \coloneqq \frac{\pi(p+1)}{6}\frac{1}{\zeta^*(2s-1)} \Fc_{0,p}(\tau,s)
\end{align*}
defines a spectral deformation of the constant function $f=1$. That is, the functions $f_s$ are 
\begin{itemize}
\item holomorphic (with respect to $s$) in an open neighborhood of $s=1$,
\item weight $0$ and level $p$ weak Maass forms with eigenvalue $s(1-s)$ as a function of $\tau$,
\end{itemize}
and satisfy $\lim_{s \to 1}f_s(\tau) = f = 1$.
\end{lemma}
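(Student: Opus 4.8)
The plan is to verify the three defining properties of a spectral deformation directly from the Fourier expansion in Lemma \ref{lem:Eisensteinexpansions} (i) and the functional equation of the completed zeta function. The normalization $f_s(\tau) = \frac{\pi(p+1)}{6} \frac{1}{\zeta^*(2s-1)} \Fc_{0,p}(\tau,s)$ is engineered precisely so that the limit $s \to 1$ produces the constant $1$, so the heart of the argument is tracking the leading behavior of $\Fc_{0,p}$ as $s \to 1$.

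First I would address the automorphic properties in $\tau$, which are essentially inherited. The series $\Fc_{0,p}(\tau,s)$ is a weight $0$ Maass--Eisenstein series for $\Gamma_0(p)$, so it is $\Gamma_0(p)$-invariant and satisfies $\Delta_0 \Fc_{0,p}(\cdot,s) = s(1-s) \Fc_{0,p}(\cdot,s)$; multiplying by the $\tau$-independent scalar $\frac{\pi(p+1)}{6\zeta^*(2s-1)}$ preserves both properties and the moderate growth at the cusps, so $f_s$ is a weight $0$, level $p$ weak Maass form with eigenvalue $s(1-s)$. Next I would treat holomorphy in $s$ near $s=1$. The Eisenstein series $\Fc_{0,p}(\tau,s)$ has a meromorphic continuation with a simple pole at $s=1$ coming from the constant term, while $\frac{1}{\zeta^*(2s-1)}$ has a simple zero at $s=1$ (since $\zeta^*(s)$ has a simple pole at $s=1$); the product is therefore holomorphic in a neighborhood of $s=1$, and this is where I would need to be careful to confirm that no other poles of $\Fc_{0,p}$ intrude near $s=1$ and that $\zeta^*(2s-1)$ has no zero there.

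The main computational step, and the one I expect to be the crux, is the limit evaluation $\lim_{s \to 1} f_s(\tau) = 1$. Here I would extract the residue of $\Fc_{0,p}(\tau,s)$ at $s=1$ from Lemma \ref{lem:Eisensteinexpansions} (i): the $v^s$ term and all nonconstant Fourier terms are regular at $s=1$, so only the $v^{1-s}$ term, with coefficient $\frac{4^{1-s}\pi \Gamma(2s-1)}{\Gamma(s)^2} \Ks_{0,p}(0,0;2s)$, contributes a pole, arising from the pole of $\Ks_{0,p}(0,0;2s)$ (equivalently a suitable ratio of zeta values) at $s=1$. Matching this residue against the simple zero of $1/\zeta^*(2s-1)$ and using the functional equation $\zeta^*(s) = \zeta^*(1-s)$ to rewrite $\zeta^*(2s-1)$ near $s=1$ should make the $v$-dependence cancel and leave exactly the constant $1$ after the normalizing factor $\frac{\pi(p+1)}{6}$ is applied.

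The hard part will be bookkeeping the exact constants: correctly identifying the residue of the zero-th Kloosterman zeta $\Ks_{0,p}(0,0;2s)$ at $s=1$ in terms of $\zeta(2s)$, $\zeta(2s-1)$ and the Euler factor at $p$, and confirming that the factor $\frac{\pi(p+1)}{6}$ is the unique scalar making the limit equal $1$ rather than some other constant. I would organize this by first computing $\Ks_{0,p}(0,0;2s)$ explicitly from its definition in \eqref{eq:Ks0Ndef} as a ratio of zeta functions with a local correction at $p$, then isolating the $v^{1-s}$ coefficient, and finally taking the limit with $1/\zeta^*(2s-1)$ via L'H\^opital or the known Laurent expansions of $\zeta^*$ about its pole. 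Once the constant is matched the claim follows immediately, and the other two bullet points require only the remarks on inheritance sketched above.
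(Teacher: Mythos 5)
Your proposal is correct, but it takes a different route to the key limit than the paper does. The paper's proof is short: it notes that $\Fc_{0,p}(\tau,s)$ and $\zeta^*(2s-1)$ both have simple poles at $s=1$ (so the singularity of $f_s$ is removable), declares the $\tau$-properties clear by construction, and then cites Goldstein's Kronecker-limit-formula-type results to get $\lim_{s \to 1}(s-1)\Fc_{0,p}(\tau,s) = \tfrac{3}{\pi(p+1)}$ directly from the index $[\slz : \Gamma_0(p)] = p+1$; matching against the residue $\tfrac{1}{2}$ of $\zeta^*(2s-1)$ then gives the limit $1$. You instead extract the residue from the explicit Fourier expansion in Lemma \ref{lem:Eisensteinexpansions} (i), which requires evaluating $\Ks_{0,p}(0,0;2s) = \frac{\zeta(2s-1)}{\zeta(2s)}\frac{p-1}{p^{2s}-1}$; this indeed yields residue $\pi \cdot \frac{6}{\pi^2}\cdot\frac{1}{p+1}\cdot\frac{1}{2} = \frac{3}{\pi(p+1)}$, so your bookkeeping closes correctly, and in fact this exact Kloosterman zeta computation appears later in the paper (Section \ref{sec:ProofOfThm1.5}, via Lemma \ref{lem:levelreduction}) when computing $B_\infty(s)$. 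What your approach buys is self-containedness (no appeal to Goldstein) at the cost of length; what the paper's approach buys is brevity and the conceptual fact that the residue of the Eisenstein series is $\tau$-independent (reciprocal of the covolume), which makes it immediate that the limit is a constant. Two small points to tighten in your writeup: the functional equation $\zeta^*(s)=\zeta^*(1-s)$ is not actually needed, only the residue of $\zeta^*$ at its pole; and you should state explicitly that the $v^s$ term and all nonconstant Fourier terms, being regular at $s=1$, are killed in the limit by division by the blowing-up factor $\zeta^*(2s-1)$ (with the termwise limit justified by locally uniform convergence of the expansion), which is the implicit step that makes the constant term the only survivor.
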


\begin{proof}
Since both $\Fc_{0,p}(\tau,s)$ and $\zeta^*(2s-1)$ have a simple pole at $s=1$, the function $f_s(\tau)$ has an isolated and removable singularity at $s=1$. The properties as a function of $\tau$ are clear by construction. The final assertion follows using \cite{goldstein}*{Theorems 3-1 and 3-3} to show
\begin{align*}
\lim_{s \to 1} (s-1)\Fc_{0,p}(\tau,s) = \frac{3}{\pi(p+1)},
\end{align*}
since $\Gamma_0(p)$ has index $p+1$ in $\slz$.
\end{proof}

Lemma \ref{lem:spectraldeformations} enables us to prove a non-explicit analog of \cite{brufuim}*{Corollary 6.3}. To this end, we define $B_{\af}(s)$ by
\begin{align} \label{eq:Bafdef}
\left(f_s\big\vert_0\sigma_{\af}\right)(\tau) \eqqcolon B_{\af}(s) v^{1-s} + \ldots,
\end{align}
following \cite{brufuim}*{Proposition 5.15}, and we recall $P_{0,0}$ from equation \eqref{eq:vvEisDef} as well as $\tilde\Theta_{K_{\af}}$ from equation \eqref{eq:Thetatildedef}.
\begin{cor} \label{cor:bfiCor6.3}
We have
\begin{multline*}
I^{\reg}(\tau,1) = \frac{\pi (p+1)}{6} \CT_{s=1} \left[\frac{p^{\frac{1-s}{2}}\zeta^*(s)}{\zeta^*(2s-1)}P_{0,0}\left(\tau,\frac{s}{2}+\frac{1}{4}\right)\right] \\
- \sqrt{p}\left(\frac{\partial}{\partial s}B_{\infty}(s)\Big\vert_{s=1} \cdot \tilde\Theta_{K_{\infty}}(\tau) + \frac{\partial}{\partial s}B_{0}(s)\Big\vert_{s=1} \cdot \tilde\Theta_{K_0}(\tau)\right).
\end{multline*}
\end{cor}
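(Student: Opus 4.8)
The plan is to combine the explicit evaluation of the Siegel theta lift of $\Fc_{0,p}(\cdot,s)$ in Lemma \ref{lem:BFIThm6.2} with the comparison of the two regularizations of the lift from \cite{brufuim}*{Subsection 5.5}. First I would use linearity of the regularized theta lift together with the definition $f_s = \frac{\pi(p+1)}{6}\zeta^*(2s-1)^{-1}\Fc_{0,p}(\cdot,s)$ from Lemma \ref{lem:spectraldeformations} and the evaluation of Lemma \ref{lem:BFIThm6.2} to obtain, for $\re(s)>1$,
\begin{align*}
I^{\reg}(\tau,f_s) = \frac{\pi(p+1)}{6}\frac{p^{\frac{1-s}{2}}\zeta^*(s)}{\zeta^*(2s-1)}P_{0,0}\left(\tau,\frac{s}{2}+\frac{1}{4}\right).
\end{align*}
Although $f_s$ is holomorphic at $s=1$, the lift of an eigenfunction of eigenvalue $s(1-s)$ need not be; indeed the right-hand side exhibits a simple pole at $s=1$ arising from $\zeta^*(s)$ and from $P_{0,0}(\tau,\frac{s}{2}+\frac14)$ at the edge $\frac34$, tempered by the zero of $\zeta^*(2s-1)^{-1}$. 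Both sides continue meromorphically to a punctured neighborhood of $s=1$, so the constant term $\CT_{s=1}$ of the right-hand side is precisely the first summand claimed in the corollary, and it remains to match $\CT_{s=1}[I^{\reg}(\tau,f_s)]$ with $I^{\reg}(\tau,1)$.

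The heart of the argument is the comparison between the spectral regularization (the family $f_s$ followed by $\CT_{s=1}$) and the truncation regularization defining $I^{\reg}(\tau,1)$ in \eqref{eq:ThetaLiftDefn} and Lemma \ref{lem:bfimain2}. Following \cite{brufuim}*{Proposition 5.15} applied to the lattice $L$ of \eqref{eq:latticedef}, the two regularizations agree away from the cusps and differ only through boundary contributions at the two cusps $i\infty$ and $0$ of $\Gamma_0(p)$. At each cusp $\af$ the constant term (in $\re z$) of the Siegel theta kernel $\theta_h(\tau,\sigma_\af z,\varphi_0)$ contains the holomorphic weight $\frac12$ theta function $\tilde\Theta_{K_\af}(\tau)$ of \eqref{eq:Thetatildedef}, whereas the constant term of $f_s$ about $\af$ equals $B_\af(s)v^{1-s}+\ldots$ by \eqref{eq:Bafdef}. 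Pairing these in the regularized integral, the width and lattice parameters of $\af$ recorded in Lemma \ref{lem:cuspparameters} produce the normalizing factor $\sqrt{p}=\sqrt{N}$.

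Tracking the interplay of the $\CT_{\sfr=0}$ and $\CT_{s=1}$ limits then explains the appearance of the $s$-derivative of $B_\af$: writing $v^{1-s}=1-(s-1)\log(v)+O((s-1)^2)$ and $B_\af(s)=B_\af(1)+(s-1)\frac{\partial}{\partial s}B_\af(s)\big\vert_{s=1}+\ldots$ near $s=1$, the value $B_\af(1)=1$ (forced by $\lim_{s\to 1}f_s=1$, since the $v^s$-part of the constant term is killed by $\zeta^*(2s-1)^{-1}$) reproduces exactly the cuspidal boundary term already present in $I^{\reg}(\tau,1)$, while the linear coefficient contributes the multiple $\frac{\partial}{\partial s}B_\af(s)\big\vert_{s=1}\,\tilde\Theta_{K_\af}(\tau)$ to the constant term. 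Summing over $\af\in\{0,i\infty\}$ with the factor $\sqrt{p}$ yields
\begin{multline*}
\CT_{s=1}\left[I^{\reg}(\tau,f_s)\right] = I^{\reg}(\tau,1) \\
+ \sqrt{p}\left(\frac{\partial}{\partial s}B_{\infty}(s)\Big\vert_{s=1}\,\tilde\Theta_{K_{\infty}}(\tau) + \frac{\partial}{\partial s}B_{0}(s)\Big\vert_{s=1}\,\tilde\Theta_{K_0}(\tau)\right),
\end{multline*}
and substituting the evaluation from the first step, with $\frac{\pi(p+1)}{6}$ pulled outside $\CT_{s=1}$, gives the asserted identity.

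The main obstacle is precisely this second step: establishing that the discrepancy between the two regularizations equals $\sqrt{p}\sum_{\af}\frac{\partial}{\partial s}B_\af(s)\big\vert_{s=1}\tilde\Theta_{K_\af}$ with the correct sign and normalization. This requires the explicit Fourier expansion of the Siegel theta kernel at both cusps of $\Gamma_0(p)$, the cusp data of Lemma \ref{lem:cuspparameters}, and careful bookkeeping of the order in which the two regularizations are applied, so that only the holomorphic theta contribution $\tilde\Theta_{K_\af}$ survives in the difference while the non-holomorphic part of the theta kernel's constant term is absorbed into the terms common to both regularizations.
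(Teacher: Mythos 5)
Your proposal is correct and follows essentially the same route as the paper: the paper's proof likewise consists of citing \cite{brufuim}*{Proposition 5.15} (with the cusp data $\widehat{\varepsilon}_{0}=\widehat{\varepsilon}_{\infty}=p$ from Lemma \ref{lem:cuspparameters}) to compare the truncation regularization of $I^{\reg}(\tau,1)$ with the spectral-deformation regularization via $f_s$, and then inserting the evaluation $I^{\reg}\left(\tau,\Fc_{0,p}(\cdot,s)\right) = p^{\frac{1-s}{2}}\zeta^*(s)P_{0,0}\left(\tau,\frac{s}{2}+\frac{1}{4}\right)$ from Lemma \ref{lem:BFIThm6.2}. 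Your additional narrative on how the boundary terms $\frac{\partial}{\partial s}B_{\af}(s)\big\vert_{s=1}\tilde\Theta_{K_{\af}}$ arise is exactly the content of that cited proposition, so it is exposition rather than a different argument.
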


\begin{proof}
According to Lemma \ref{lem:cuspparameters}, we have $\widehat{\varepsilon}_{0} = \widehat{\varepsilon}_{\infty} = p$. We apply \cite{brufuim}*{Proposition 5.15} getting
\begin{multline*}
\CT_{s=1} \left[\frac{1}{s(1-s)} \int_{\mathbb{F}(p)} f_s(z) \Theta_L\left(\tau,z, \Delta_{\frac{1}{2},z}\varphi_0\right)\dm\mu(z)\right] \\
= \CT_{s=1} \left[\int_{\mathbb{F}(p)} f_s(z) \Theta_L\left(\tau,z,\varphi_0\right)\dm\mu(z)\right] = \frac{\pi(p+1)}{6} \CT_{s=1} \left[ \frac{I^{\reg}\left(\tau, \Fc_{0,p}(\cdot,s)\right)}{\zeta^*(2s-1)}  \right],
\end{multline*}
and the claim follows according to Lemma \ref{lem:BFIThm6.2}.
\end{proof}

We will make Corollary \ref{cor:bfiCor6.3} explicit in Section \ref{sec:ProofOfThm1.5}. 

\subsection{Proof of part (ii)}

We require another result to prove Theorem \ref{thm:GcAsThetaLift} (ii). To this end, we write
\begin{align*}
P_{0,0}(\tau,s) = \sum_{h \pmod*{2p}} P_{0,0}^{(h)} (\tau,s) \ef_h
\end{align*}
for the individual components of the vector-valued Eisenstein series $P_{0,0}$ from equation \eqref{eq:vvEisDef}. The final sentence of \cite{brufuim}*{Subsection 6.2} states that the level $4$ projection (see equation \eqref{eq:plusspaceprojection}) of $P_{0,0}$ to the plus space yields $\Fc_{\frac{1}{2},4}^+$. Analogously, we show that the level $4p$ projection yields $\Fc_{\frac{1}{2},4p}^+$ for odd primes $p$.
\begin{lemma} \label{lem:Eisensteinprojection}
Suppose that $p$ is prime and that $\re(s) > \frac{3}{4}$. Then, we have
\begin{align*}
\frac{1}{3}\sum_{h \pmod*{2p}} P_{0,0}^{(h)}(4p\tau,s) = (4p)^{s-\frac{1}{4}}\Fc_{\frac{1}{2},4p}^+(\tau,s).
\end{align*}
\end{lemma}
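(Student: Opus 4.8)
The plan is to prove the identity directly from the defining series, exploiting that for $\re(s) > \frac{3}{4}$ both sides are given by absolutely convergent sums. By the cited projection property of the map \eqref{eq:plusspaceprojection} (see \cite{eiza}*{Theorem 5.6}, \cite{brufuim}*{Example 2.2}, \cite{bor98}*{Example 2.4}), the left-hand side is a scalar-valued weight $\frac{1}{2}$ form on $\Gamma_0(4p)$ lying in Kohnen's plus space; moreover it is an eigenform of $\Delta_{\frac{1}{2}}$ with eigenvalue $\left(s-\frac{1}{4}\right)\left(\frac{3}{4}-s\right)$, since $P_{0,0}(\cdot,s)$ is assembled from the seed $v^{s-\frac{1}{4}}\ef_0$ and the projection commutes with the Laplacian. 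The right-hand side $(4p)^{s-\frac{1}{4}}\Fc_{\frac{1}{2},4p}^+(\tau,s)$ has exactly the same weight, level, plus-space membership, and Laplace eigenvalue, so the task reduces to matching the two as Maass--Eisenstein series.

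First I would parametrize the cosets $\mathrm{Mp}(\Z)_\infty \backslash \mathrm{Mp}(\Z)$ by their bottom rows $(c,d)$ with $\gcd(c,d)=1$, exactly as for $(\Gamma_0(1))_\infty\backslash\Gamma_0(1)$, and write the $h$-component $P_{0,0}^{(h)}(\tau,s)$ as the corresponding sum of $\im(\gamma\tau)^{s-\frac{1}{4}}\phi(\tau)^{-1}$ weighted by the $(h,0)$-entry of $\rho_L^{-1}(\gamma,\phi)$, which is an explicit Gauss sum in the data of $L$. Substituting $\tau \mapsto 4p\tau$ produces the global factor $(4p)^{s-\frac{1}{4}}$ and rescales the bottom-row entry $c$ to $4pc$ (conjugating $\gamma$ by $\mathrm{diag}(\sqrt{4p},1/\sqrt{4p})$). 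The key point is that summing the Weil-representation matrix entries over $h \pmod{2p}$ collapses the associated Gauss sum to the theta multiplier $\left(\frac{c}{d}\right)\varepsilon_d$ times the indicator of the congruence $4p \mid c$. This is precisely the mechanism by which the plus-space projection \eqref{eq:plusspaceprojection} converts the $\slz$-type average defining $P_{0,0}$ into the $\Gamma_0(4p)$-average defining $\Fc_{\frac{1}{2},4p}$ from \eqref{eq:Fcdef}, with the projector $\mathrm{pr}^+$ built into the summation over $h$. Carrying out this reduction and bookkeeping the constants (the $\frac{1}{2}$ in \eqref{eq:vvEisDef}, the $\frac{2}{3}$ normalization of $\mathrm{pr}^+$ visible in Lemma \ref{lem:Eisensteinexpansions}(iii), and the value of the Gauss sum) yields the asserted identity with overall factor $\frac{1}{3}$.

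The main obstacle is the explicit evaluation of the Weil-representation matrix coefficients $[\rho_L^{-1}(\gamma,\phi)]_{h,0}$ for general $\gamma$ and the verification that their sum over $h \pmod{2p}$ reproduces both the congruence condition $4p \mid c$ and the correct theta multiplier, including all roots of unity; since the lattice $L$ depends on $p$ through $L'/L \cong \Z/2p\Z$, this does not reduce formally to the level-$4$ case and the Gauss sums must be computed afresh. As an independent check on the normalization I would invoke uniqueness of Maass--Eisenstein series in the range of absolute convergence: both the projected $P_{0,0}$ and $\Fc_{\frac{1}{2},4p}^+$ are plus-space $\Delta_{\frac{1}{2}}$-eigenforms of moderate growth whose only growing constant term is the incoming wave $v^{s-\frac{1}{4}}$ at the cusp $i\infty$ (the seed $\ef_0$ corresponding to $i\infty$), with no such term at the remaining cusps. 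Matching this single leading coefficient then determines the scalar relating the two forms and confirms both the constant $\frac{1}{3}$ and the power $(4p)^{s-\frac{1}{4}}$. This argument runs parallel to the level-$4$ computation recorded at the end of \cite{brufuim}*{Subsection 6.2}.
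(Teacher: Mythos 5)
There are two genuine gaps, one in each of your two routes.

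Your main route leaves exactly the hard step undone, and the outcome you predict for that step is inconsistent with the identity you are trying to prove. You claim that summing the Weil representation entries over $h \pmod*{2p}$ collapses the Gauss sum to the theta multiplier $\left(\frac{c}{d}\right)\varepsilon_d$ times the indicator of $4p \mid c$. If that were true, the unwinding would produce $(4p)^{s-\frac{1}{4}}\Fc_{\frac{1}{2},4p}(\tau,s)$, i.e.\ the \emph{unprojected} average over $\left(\Gamma_0(4p)\right)_{\infty}\backslash\Gamma_0(4p)$ from \eqref{eq:Fcdef} — not $(4p)^{s-\frac{1}{4}}\Fc^+_{\frac{1}{2},4p}(\tau,s) = (4p)^{s-\frac{1}{4}}\,\mathrm{pr}^+\Fc_{\frac{1}{2},4p}(\tau,s)$. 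These are different functions: the projected series has leading coefficient $\frac{2}{3}$ rather than $1$ (Lemma \ref{lem:Eisensteinexpansions} (iii)), its Fourier support is restricted to $n \equiv 0,1 \pmod*{4}$, and its Kloosterman structure \eqref{eq:Ks4Nplusdef} carries the weight $1+\left(\frac{4}{c}\right)$ over \emph{all} $c>0$, reflecting that $\mathrm{pr}^+$ involves matrices outside $\Gamma_0(4p)$. So the $h$-summation must reproduce this larger coset structure, not an indicator of $4p \mid c$; carried out as you describe, the computation would establish a false identity. Since you yourself flag this Gauss-sum evaluation as ``the main obstacle'' and do not perform it, the core of the proof is missing and the sketch of it points in the wrong direction.

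Your uniqueness fallback rests on a false premise. It is not true that the only growing constant term of either side sits at $i\infty$. Apply $\widetilde{S}$ to $P_{0,0}$ and sum over $h$: since $\sum_{h \pmod*{2p}} e^{-2\pi i hh'/(2p)} = 2p\,\delta_{h',0}$, the behavior of $\sum_h P_{0,0}^{(h)}(4p\tau,s)$ at the cusp $0$ is governed precisely by the seed component $P_{0,0}^{(0)}$, which contains $v^{s-\frac{1}{4}}$; hence the projected form has a growing $v^{s-\frac{1}{4}}$-term at the cusp $0$ as well. The same holds for $\Fc^+_{\frac{1}{2},4p}$ — compare $\theta$ and $\Hc$, which have nonvanishing constant terms at both $\infty$ and $0$. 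Consequently, matching the single coefficient of $v^{s-\frac{1}{4}}$ at $i\infty$ does not force equality via the $L^2$/eigenvalue argument: the difference can still grow at $0$. To repair this you would need either to match the growing terms at all cusps of $\Gamma_0(4p)$, or to know in advance that the plus-space Eisenstein eigenspace at generic $s$ is one-dimensional — but that one-dimensionality is essentially the vector-valued/plus-space correspondence the lemma encodes, so the argument becomes circular. For contrast, the paper's proof avoids all Gauss sums: it invokes the Borcherds--Scheithauer averaging $f \mapsto \sum_{(\gamma,\phi)} \left(f\vert_{\frac{1}{2}}\gamma\right)\rho_L\left(\gamma^{-1}\right)\ef_0$ to lift the scalar form to a vector-valued one, identifies the lift of $(4p)^{s-\frac{1}{4}}\Fc^+_{\frac{1}{2},4p}$ with $P_{0,0}$, fixes the factor $\frac{1}{3}$ by comparing constant terms (Lemma \ref{lem:Eisensteinexpansions} (iii) against \cite{alfesthesis}*{Proposition A.0.4}), and then uses the prime-level fact from \cite{alfesthesis} that this lift inverts the projection \eqref{eq:plusspaceprojection}; the paper's mentioned alternative — matching Kloosterman zeta functions in the Fourier expansions — is the rigorous version of your first route, and it is exactly the part you did not do.
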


\begin{proof}
Let $\widetilde{\Gamma}$ be the metaplectic double cover of a congruence group $\Gamma < \slz$ (defined analogously to $\mathrm{Mp}(\Z)$ in equation \eqref{eq:Mp2Zdef}). According to Borcherds \cite{bor00} and Scheithauer \cite{scheit}*{Proposition 6.1}, a scalar-valued modular form $f$ of weight $\kappa \in \frac{1}{2}\Z$ and level $4N$ gives rise to a vector-valued modular form
\begin{align*}
\sum_{\left(\gamma, \phi\right) \in \widetilde{\Gamma_0(4N)} \backslash \mathrm{Mp}_2(\Z)} \left(f \vert_{\kappa} \gamma\right)(\tau) \rho_{L}\left(\gamma^{-1}\right) \ef_{0}
\end{align*}
of weight $\kappa$ for $\rho_{L}$. Here, we used the Petersson slash operator for scalar-valued modular forms. Inserting $f(\tau) = (4p)^{s-\frac{1}{4}}\Fc_{\frac{1}{2},4p}^+(\tau,s)$ yields $ P_{0,0}(4p\tau,s)$, where the multiple of $\frac{1}{3}$ can be deduced easily by comparing the constant terms of both Fourier expansions about $i\infty$, see Lemma \ref{lem:Eisensteinexpansions} (iii) and \cite{alfesthesis}*{Proposition A.0.4}. This is equivalent to the claim, because $p$ is prime, see \cite{alfesthesis}*{Subsection 7.3} (last line on page $114$).
\end{proof}

Alternatively, one can match the Fourier expansion about $i\infty$ on each side of the identity directly. To this end, we refer to work by Shintani \cite{shin}, Bruinier and Kuss \cite{bruku}, as well as our previous work \cite{bemo}*{Subsection 3.2} to match the Kloosterman zeta functions in front of $v^{1-s}$.

Now, we are in position to prove Theorem \ref{thm:GcAsThetaLift} (ii).
\begin{proof}[Proof of Theorem \ref{thm:GcAsThetaLift} {\rm (ii)}]
Using equation \eqref{eq:plusspaceprojection}, we let
\begin{align*}
F(\tau) \coloneqq \frac{2}{p-1}\Ec(\tau) + \Gc(\tau) - \Cf(p) \theta(\tau) - \frac{\sqrt{p}}{p^2-1}\sum_{h \pmod*{2p}} I_h^{\reg}(4p\tau,1).
\end{align*}
The functions $\Ec$, $\Gc$, and $\theta$ are weight $\frac{1}{2}$ sesquiharmonic Maass forms of level $4p$ and satisfy the plus space condition (see \cite{dit11annals}*{Theorem 4} and \cite{bemo}*{Theorem 1.3}). The theta lift of $1$ is a vector-valued weight $\frac{1}{2}$ sesquiharmonic Maass form of level $4p$ according to \cite{brufuim}*{Theorem 4.1}, and we project it to a scalar-valued form in the plus space as $p$ is prime. Thus, it suffices to check that $F$ is of exponential decay as $v \to \infty$ and that it has at most moderate growth towards all other inequivalent cusps. Then, the claim follows by virtue of Proposition \ref{prop:sesquiharmonicequality}.

Let $\af$ be a cusp not equivalent to $i\infty$. First, we establish the growth condition towards $\af$. We recall that both $\Ec$ and $\Gc$ arise as the constant term in the Laurent expansions of the Eisenstein series $\Fc_{\frac{1}{2},4}^+$ and $\Fc_{\frac{1}{2},4p}^+$ about $s=\frac{3}{4}$ respectively. According to \cite{kubota}*{Theorem 2.1.2}, both Eisenstein series are bounded at $\af$ whenever $\re(s) > \frac{3}{4}$. Here, we note that the proof of this result generalizes from weight $0$ to weight $\frac{1}{2}$ straightforwardly. Thus, both $\Ec$ and $\Gc$ are bounded towards $\af$, and clearly $\theta$ is of at most moderate growth towards $\af$ as well. To show that the projection of the theta lift to the plus space is of at most moderate growth towards $\af$, we project the identity from Corollary \ref{cor:bfiCor6.3} to the plus space. Combining with the Serre--Stark basis theorem \cite{sest} (see \cite{thebook}*{Theorem 2.8} too), this yields
\begin{align*}
\sum_{h \pmod*{2p}} I_h^{\reg}(4p\tau,1) = \frac{\pi (p+1)}{6} \CT_{s=1} \left[\frac{p^{\frac{1-s}{2}}\zeta^*(s)}{\zeta^*(2s-1)}\sum_{h \pmod*{2p}}P_{0,0}\left(4p\tau,\frac{s}{2}+\frac{1}{4}\right)\right] + \lambda \theta(\tau)
\end{align*}
for some $\lambda \in \C$. We insert the result of Lemma \ref{lem:Eisensteinprojection}, and obtain
\begin{align} \label{eq:specdefprojected}
\sum_{h \pmod*{2p}} I_h^{\reg}(4p\tau,1) = \frac{\pi \sqrt{p}(p+1)}{2} \CT_{s=1} \left[\frac{2^s\zeta^*(s)}{\zeta^*(2s-1)} \Fc_{\frac{1}{2},4p}^+\left(\tau,\frac{s}{2}+\frac{1}{4}\right)\right] + \lambda \theta(\tau).
\end{align}
Arguing as before shows that the projection of the theta lift to the plus space is of at most moderate growth towards $\af$ too.

Secondly, we establish that $F$ is of exponential decay as $v \to \infty$. To this end, we compare the constant terms in the involved Fourier expansions about $i\infty$. Note that $\Gamma_0(p)$ has the two inequivalent cusps $i\infty$ and $0$.
\begin{enumerate}[label={\rm (\arabic*)}]
\item Using equation \eqref{eq:BFIvol}, we have
\begin{align*}
\mathrm{vol}\left(\Gamma_0(p)\backslash\Hb\right) = - \frac{1}{6} (p+1).
\end{align*}
By virtue of Lemma \ref{lem:cuspparameters}, we deduce that $k_{\infty} = k_{0} = 0$ in Lemma \ref{lem:bfimain2} and we obtain
\begin{multline*}
\hspace*{\leftmargini} \sum_{\af \in \Gamma \backslash \Iso(V)} \widehat{\varepsilon}_{\af}  \left[\log\left(4 \beta_{\af}^2 \pi v\right) + \gamma + \psi\left(\frac{k_{\af}}{\beta_{\af}}\right) + \psi\left(1-\frac{k_{\af}}{\beta_{\af}}\right)\right] \\
= p\left[\log\left(\frac{4\pi v}{p^2}\right) + \log(4\pi v) - 2\gamma\right].
\end{multline*}
Consequently, the theta lift projected to the plus space has the constant term
\begin{multline*}
\hspace*{\leftmargini} \sum_{h \pmod*{2p}} I_h^{\reg}(4p\tau,1) \\
= -2\left(-\frac{1}{6} (p+1) \right)\sqrt{4pv} - \frac{\sqrt{p}}{2\pi} \left(\log\left(\frac{16\pi v}{p}\right) + \log(16\pi pv) - 2\gamma\right) + \ldots,
\end{multline*}
which simplifies to
\begin{align*}
\frac{2}{3}(p+1)p^{\frac{1}{2}} v^{\frac{1}{2}} - \frac{\sqrt{p}}{\pi} \left(\log(16 v) + \log(\pi) - \gamma\right).
\end{align*}

\item According to Lemma \ref{lem:GcExpansion}, we have the constant term
\begin{align*}
\Gc(\tau) = 
\frac{2}{3} v^{\frac{1}{2}} - \frac{1}{2\pi (p+1)}\log(16v) + \frac{2}{3} (1-i) \pi \cf(0) + \ldots,
\end{align*}
and by Theorem \ref{thm:cfsquareindices} (ii), we have
\begin{align*}
\frac{2}{3} (1-i) \pi \cf(0) = \frac{2}{\pi(p+1)}\left(\gamma - \log(2) - \frac{\zeta'(2)}{\zeta(2)} - \frac{p^2\log(p)}{p^2-1}\right).
\end{align*}

\item Rewriting the constant terms in equation \eqref{eq:DITconstantterms}, we infer
\begin{align*}
\Ec(\tau) = \frac{\sqrt{v}}{3} - \frac{1}{4\pi} \log(16v) + \frac{1}{\pi} \left(\gamma - \log(2) - \frac{\zeta'(2)}{\zeta(2)}\right) + \ldots.
\end{align*}
\end{enumerate}

Combining, we verify that
\begin{itemize}
\item the contributions to $v^{\frac{1}{2}}$ satisfy
\begin{align*}
\frac{2}{p-1}\frac{1}{3} + \frac{2}{3} &= \frac{\sqrt{p}}{p^2-1} \frac{2}{3}(p+1)p^{\frac{1}{2}},
\end{align*}
\item the contributions to $\log(16v)$ satisfy
\begin{align*}
\frac{2}{p-1}\left(-\frac{1}{4\pi}\right) + \left(-\frac{1}{2\pi (p+1)}\right) &= \frac{\sqrt{p}}{p^2-1}\left(- \frac{\sqrt{p}}{\pi}\right),
\end{align*}
\item the contributions to $1$ satisfy
\begin{multline*}
\hspace*{\leftmargini} \frac{2}{p-1}\frac{1}{\pi} \left(\gamma - \log(2) - \frac{\zeta'(2)}{\zeta(2)}\right) 
+ \frac{2}{\pi(p+1)}\left(\gamma - \log(2) - \frac{\zeta'(2)}{\zeta(2)} - \frac{p^2\log(p)}{p^2-1}\right) \\
= \Cf(p) +  \frac{\sqrt{p}}{p^2-1} \left(- \frac{\sqrt{p}}{\pi} \left(\log(\pi) - \gamma\right)\right).
\end{multline*}
\end{itemize}
This proves
\begin{multline*}
\frac{2}{p-1}\left(\frac{1}{3}v^{\frac{1}{2}} - \frac{1}{4\pi} \log(16v) + \frac{1}{\pi} \left(\gamma - \log(2) - \frac{\zeta'(2)}{\zeta(2)}\right)\right) \\
+ \frac{2}{3} v^{\frac{1}{2}} - \frac{1}{2\pi (p+1)}\log(16v) + \frac{2}{\pi(p+1)}\left(\gamma - \log(2) - \frac{\zeta'(2)}{\zeta(2)} - \frac{p^2\log(p)}{p^2-1}\right) \\
= \Cf(p) + \frac{\sqrt{p}}{p^2-1} \left(\frac{2}{3}(p+1)p^{\frac{1}{2}} v^{\frac{1}{2}} - \frac{\sqrt{p}}{\pi} \left(\log(16 v) + \log(\pi) - \gamma\right)\right),
\end{multline*}
with
\begin{multline*}
\Cf(p) = \frac{2}{\pi(p-1)} \left(\gamma - \log(2) - \frac{\zeta'(2)}{\zeta(2)}\right) + \frac{2}{\pi(p+1)} \left(\gamma - \log(2) - \frac{\zeta'(2)}{\zeta(2)} - \frac{p^2\log(p)}{p^2-1}\right) \\
+ \frac{\sqrt{p}}{p^2-1} \frac{\sqrt{p}}{\pi} \left(\log(\pi)-\gamma\right).
\end{multline*}
We simplify the expression for $\Cf(p)$ and $F$ is of exponential decay as $v \to \infty$.
\end{proof}

\section{The proof of Theorem \ref{thm:millsonthetalift}} \label{sec:ProofOfThm1.2}

\begin{proof}[Proof of Theorem \ref{thm:millsonthetalift}]
According to \cite{brufuim}*{Remark 5.3}, we have
\begin{align*}
\varphi_{\text{KM}}^{V^{-}} = -\frac{1}{\pi} \xi_{\frac{1}{2}} \varphi_0 \dm\mu(z).
\end{align*}
Recall from \cite{dit11annals}*{p.\ 973} that
\begin{align*}
\xi_{\frac{1}{2}} \Ec(\tau) = -2\Hc(\tau),
\end{align*}
and from \cite{bemo}*{Theorem 1.3 (iii)} (see equation \eqref{eq:GcShadow} too) that
\begin{align*}
\xi_{\frac{1}{2}} \Gc(\tau) = \frac{4}{p+1} \Big(\Hs_{1,1}(\tau)-\Hc(\tau)\Big) - \frac{4}{p} \Hs_{1,p}(\tau) - \frac{4}{1-p} \Hs_{p,p}(\tau).
\end{align*}
Combining with Theorem \ref{thm:GcAsThetaLift} (ii), we infer
\begin{multline*}
-4\pi p \frac{\sqrt{p}}{p^2-1} \sum_{h \pmod*{2p}} J_h^{\reg}(4p\tau,1) = \xi_{\frac{1}{2}} \left(\frac{2}{p-1}\Ec(\tau) + \Gc(\tau) - \Cf(p) \theta(\tau)\right) \\
= -2 \frac{2}{p-1} \Hc(\tau) + \frac{4}{p+1} \Big(\Hs_{1,1}(\tau)-\Hc(\tau)\Big) - \frac{4}{p} \Hs_{1,p}(\tau) - \frac{4}{1-p} \Hs_{p,p}(\tau).
\end{multline*}
Simplifying yields
\begin{align*}
\sum_{h \pmod*{2p}} J_h^{\reg}(4p\tau,1) = \frac{2}{\pi \sqrt{p}} \Hc(\tau) - \frac{p-1}{\pi p^{\frac{3}{2}}} \Hs_{1,1}(\tau) + \frac{p^2-1}{\pi p^{\frac{5}{2}}} \Hs_{1,p}(\tau) - \frac{p+1}{\pi p^{\frac{3}{2}}}\Hs_{p,p}(\tau)
\end{align*}
and applying the second assertion of \cite{bemo}*{Theorem 1.1} completes the proof.
\end{proof}

\section{The proof of Theorem \ref{thm:fouriercomparisonmain}} \label{sec:ProofOfThm1.4}

\begin{thm} \label{thm:fouriercomparisondetail}
Let $p$ be an odd prime.
We write $n = tm^2$ with $t$ fundamental.
\begin{enumerate}[label={\rm (\roman*)}]
\item Let $H_{\ell,p}(n)$ be Pei and Wang's \cite{peiwang} generalized Hurwitz class numbers (see equations \eqref{eq:HellNdef}, \eqref{eq:HNNdef}). If $n < 0$ with $n \equiv 0,1 \pmod*{4}$, then we have
\begin{align*}
\hspace*{\leftmargini} \sum_{Q \in \mathcal{Q}_{p,n} \slash \Gamma_0(p)} \frac{1}{\vt{\Gamma_0(p)_Q}} = \frac{4(p+1)}{p} H_{1,p}(-n) - \frac{2(p+1)}{p-1} H_{p,p}(-n).
\end{align*}
\item We define the local factors
\begin{align*}
\widetilde{A(2,n)} \coloneqq \begin{cases} 1 - 2^{-\frac{\nu_2(n)+3}{2}} & \text{if } 2 \nmid \nu_2(n), \\
1 - 2^{-\frac{\nu_2(n)}{2}} & \text{if } 2 \mid \nu_2(n), \frac{n}{2^{\nu_2(n)}} \equiv 3 \pmod*{4}, \\
1 & \text{if } 2 \mid \nu_2(n), \frac{n}{2^{\nu_2(n)}} \equiv 1 \pmod*{8}, \\
1-\frac{2}{3} \cdot 2^{-\frac{\nu_2(n)}{2}} & \text{if } 2 \mid \nu_2(n), \frac{n}{2^{\nu_2(n)}} \equiv 5 \pmod*{8},
\end{cases}
\end{align*}
and 
\begin{align*}
A(p,n) = \begin{cases}
\frac{1}{p}-(p+1)p^{-\frac{\nu_p(n)+3}{2}} & \text{if } 2 \nmid \nu_p(n), \\
\frac{1}{p} & \text{if } 2 \mid \nu_p(n), \left(\frac{\frac{n}{p^{\nu_p(n)}}}{p}\right) = 1, \\
\frac{1}{p} - 2p^{-\frac{\nu_p(n)}{2}-1} & \text{if } 2 \mid \nu_p(n), \left(\frac{\frac{n}{p^{\nu_p(n)}}}{p}\right) = -1.
\end{cases}
\end{align*}
for $p > 2$ from \cite{bemo}*{Section 3}. Suppose that $0 < n \equiv 0,1 \pmod*{4}$. Let $\widetilde{\varepsilon}_t > 1$ be the fundamental unit of $\Q\left(\sqrt{t}\right)$ and $h(t)$ be the usual class number in the wide sense. If $t \neq 1$, then we have
\begin{multline*}
\hspace*{\leftmargini} \frac{1}{2} \sum_{Q \in \mathcal{Q}_{p,n} \slash \Gamma_0(p)} \int_{\Gamma_0(p)_Q \backslash S_Q} \frac{\sqrt{n}\dm\tau}{Q(\tau,1)} = 2\pi\frac{p+1}{\sqrt{p}} \frac{h^*(n)}{\sqrt{n}}   \\
+ 8p^{\frac{3}{2}} \left(1-\chi_t(2)2^{-1}\right) \left(1-\chi_t(p)p^{-1}\right) T_{4p,0}^{\chi_t}(m) \widetilde{A(2,n)} A\left(p,tm^2\right) \frac{\log\left(\widetilde{\varepsilon}_t\right)}{\sqrt{t}} h(t).
\end{multline*}

\item If $0 < n = m^2 \equiv 0,1 \pmod*{4}$ is a square, then we have
\begin{multline*}
\hspace*{\leftmargini}  \frac{\sqrt{p}}{2\left(p^2-1\right)} \sum_{Q \in \mathcal{Q}_{p,m^2} \slash \Gamma_0(p)} \int_{\Gamma_0(p)_Q \backslash S_Q} \frac{\dm\tau}{Q(\tau,1)} \\
= \frac{1}{3(p-1)} \left(-5\gamma - \log(\pi) + 4\frac{\zeta'(2)}{\zeta(2)} + 4\log(2) - \frac{6\log(p)}{(p+1)} - 4 \tf_{1}(m)\right) \\
+ \frac{2}{p+1} \left[\left(2^{-\nu_2(m)}-1\right) \log(2) + \frac{p+1}{p-1}p^{-\nu_p(m)} \log(p) + \log(m) - \tf_{4p}(m) \right] 
\end{multline*}
\end{enumerate}
\end{thm}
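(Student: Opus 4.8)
The entire argument rests on the single functional identity of Theorem~\ref{thm:GcAsThetaLift}~(ii), which I would treat as an equality of $q$-expansions and then read off coefficient by coefficient. The plan is to substitute the full Fourier expansions of $\Ec$ (Lemma~\ref{lem:EcExpansion}), $\Gc$ (Lemma~\ref{lem:GcExpansion}), $\theta(\tau)=\sum_{n\in\Z}q^{n^2}$, and of the Siegel theta lift (Lemma~\ref{lem:bfimain2}) into that identity, then apply the plus space projection \eqref{eq:plusspaceprojection} to $\sum_h I_h^{\reg}$, which replaces $\tau$ by $4p\tau$ and hence multiplies every Fourier index by $4p$. After this substitution I would compare the coefficient of $q^n$ on both sides in the three ranges $n<0$, $0<n\neq\square$, and $n=m^2$. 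Since $\theta$ is supported on squares, it only affects the third range, which cleanly separates the three parts of the theorem. Throughout I would use $\Gamma(\tfrac12,x)=\sqrt{\pi}\,\erfc(\sqrt{x})$ to align the non-holomorphic profiles of $\Ec$, $\Gc$ (which carry $\Gamma(\tfrac12,4\pi\vt{n}v)$) with the theta lift (which carries $\erfc$).

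For part~(i) ($n<0$) the theta lift contributes only through its $\deg Z(m,h)$ term; summing over $h\pmod*{2p}$ and invoking the identification \eqref{eq:quadraticformsidentification}, the right-hand side produces exactly the geometric trace $\sum_{Q\in\mathcal{Q}_{p,n}/\Gamma_0(p)}\vt{\Gamma_0(p)_Q}^{-1}$. On the left, the negative coefficients of $\Ec$ are governed by the imaginary quadratic traces of $\Hc$ (i.e.\ $H_{1,1}$), while those of $\Gc$ are given by $\cf(n)$, which \cite{bemo} expresses as a combination of $H_{1,1}$, $H_{1,p}$, $H_{p,p}$ in accordance with the shadow computation \eqref{eq:GcShadow} already used in Section~\ref{sec:ProofOfThm1.2}. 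The $H_{1,1}$-contributions from $\tfrac{2}{p-1}\Ec$ and from $\Gc$ cancel, and solving for the trace yields the stated combination $\tfrac{4(p+1)}{p}H_{1,p}(-n)-\tfrac{2(p+1)}{p-1}H_{p,p}(-n)$; this is precisely Theorem~\ref{thm:fouriercomparisonmain}.

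For part~(ii) ($0<n=tm^2$ non-square) the function $\theta$ drops out. The theta lift contributes $\sum_{X}\mathrm{length}(c(X))$, which I would convert into the regularized integral $\tfrac12\int_{S_Q}\tfrac{\sqrt{n}\,\mathrm{d}\tau}{Q(\tau,1)}$ via the length–trace normalization of \eqref{eq:trrealdef}. On the left, $\Ec$ contributes $h^*(n)/\sqrt{n}$ and $\Gc$ contributes $\tfrac23(1-i)\pi\,\cf(n)$, where $\cf(n)$ for $n\neq\square$ is made explicit in \cite{bemo} in terms of the divisor sum $T_{4p,0}^{\chi_t}(m)$, the local factors $\widetilde{A(2,n)}$, $A(p,n)$, and the value $L(0,\chi_t)$. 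Rewriting $h^*(n)$ and $L(0,\chi_t)$ through Dirichlet's class number formula (producing $\log(\widetilde{\varepsilon}_t)\,h(t)/\sqrt{t}$) and collecting the local factors gives (ii).

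The substantive case is part~(iii) ($n=m^2$), where all four pieces contribute and where I expect the main obstacle to lie. Here $\Ec$ gives $\bff(m^2)$ plus the non-holomorphic $\tfrac{1}{2\pi}\alpha(4m^2v)$, $\Gc$ gives $\tfrac23(1-i)\pi\,\cf(m^2)$ plus $\tfrac{1}{\pi(p+1)}\bigl(\gamma+\log(\pi m^2)+\alpha(4m^2v)\bigr)$, $\theta$ gives $2\Cf(p)$, and the theta lift gives the regularized square trace together with the special-function terms $\Fc(\cdot)$ and the digamma/logarithmic background of Lemma~\ref{lem:bfimain2}. The delicate verification is that the non-holomorphic $\alpha$-contributions cancel across the identity at \emph{every} square index: after projection, the theta lift's special function at new index $m^2$ originates from old index $m^2/(4p)$, i.e.\ effective parameter $\mu=m/(2p)$, so Lemma~\ref{lem:specialfunctionrelation} with $N=1$ gives $\Fc\bigl(2\sqrt{\pi v}\,m\bigr)=-\tfrac12\alpha(4m^2v)$, matching the left-hand profile for all $m$ (not merely $2p\mid m$). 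Once these $\alpha$-terms and the $\log(v)$-terms are seen to cancel, I would substitute the explicit values $\bff(m^2)$ and $\cf(m^2)$ from Theorem~\ref{thm:cfsquareindices}~(i) and (iii), the constant $\Cf(p)$ from \eqref{eq:frakCdef}, and the digamma values at $k_{\lf}=0$ (using $\psi(0)=-\gamma$ and the $\log(4\beta_{\lf}^2\pi v)$ background), and solve for the trace. The hard part is purely organizational: threading the plus space index/component correspondence so that the special-function arguments align, and then assembling the many constants $\gamma$, $\log(2)$, $\log(p)$, $\log(\pi)$, $\zeta'(2)/\zeta(2)$, $\tf_{1}(m)$, $\tf_{4p}(m)$, and $\nu_2(m),\nu_p(m)$ into the stated closed form.
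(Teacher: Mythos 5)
Your proposal is correct and follows essentially the same route as the paper's proof: project the identity of Theorem \ref{thm:GcAsThetaLift} (ii) to the plus space by inserting $4p\tau$ and sending the index $n \mapsto \frac{n}{4p}$, compare coefficients separately for $n<0$, $n>0$ non-square, and $n=m^2$, evaluate $\cf(n)$ through the explicit Kloosterman zeta formulas of \cite{bemo} and the linear relation among $H_{1,1}$, $H_{1,p}$, $H_{p,p}$ for parts (i)--(ii), and for part (iii) cancel the $\alpha$-contributions via precisely the reindexing $m \mapsto \frac{m}{2p}$ and Lemma \ref{lem:specialfunctionrelation} with $N=1$ before substituting Theorem \ref{thm:cfsquareindices} and $\Cf(p)$ --- all of which is exactly what the paper does. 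The only slip is your labeling of the $L$-value in part (ii) as $L(0,\chi_t)$, which vanishes since $\chi_t$ is even for $t>0$; the quantity that actually appears (and the one your Dirichlet class number formula step implicitly uses) is the ratio $L_{4p}\left(1,\chi_t\right)/L_{4p}\left(2,\id\right)$.
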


\begin{proof}
Due to the projection to the plus space as in equation \eqref{eq:plusspaceprojection}, we have to insert $4p\tau$ in Lemma \ref{lem:bfimain2} and map $n \mapsto \frac{n}{4p}$ subsequently.
\begin{enumerate}[label={\rm (\roman*)}]
\item From the non-holomorphic parts, we get the identity
\begin{multline*}
\hspace*{\leftmargini} \frac{\sqrt{p}}{p^2-1} \sum_{h \pmod*{2p}} \deg Z\left(\frac{n}{4p},h\right) \frac{\erfc\left(2\sqrt{\pi \vt{\frac{n}{4p}}(4pv)}\right)}{2\sqrt{ \vt{\frac{n}{4p}}}} \\
=  \frac{2}{3} (1-i) \pi^{\frac{1}{2}} \cf(n) \Gamma\left(\frac{1}{2},4\pi\vt{n}v\right) 
+ \frac{2}{p-1} \frac{H_{1,1}(\vt{n})}{\sqrt{\vt{n}}} \frac{1}{\sqrt{\pi}} \Gamma \left( \frac{1}{2}, 4 \pi \vt{n} v \right)
\end{multline*}
for every $n < 0$ such that $n \equiv 0,1 \pmod*{4}$. Using 
\begin{align*}
\frac{\erfc(2\sqrt{\pi \vt{m} v}) }{2\sqrt{\vt{m}}} = \frac{\Gamma\left(\frac{1}{2}, 4\pi \vt{m} v\right)}{2\sqrt{\pi \vt{m}}},
\end{align*}
this simplifies to
\begin{align*}
\hspace*{\leftmargini} \frac{\sqrt{p}}{p^2-1} \sum_{h \pmod*{2p}} \deg Z\left(\frac{n}{4p},h\right) \frac{1}{2 \sqrt{  \pi \vt{\frac{n}{4p}}}}
=  \frac{2}{3} (1-i) \pi^{\frac{1}{2}} \cf(n) 
+ \frac{2}{p-1} \frac{1}{\sqrt{\pi}} \frac{H_{1,1}(\vt{n})}{\sqrt{\vt{n}}}
\end{align*}
Using equations \eqref{eq:quadraticformsidentification} and \eqref{eq:trimagdef}, this implies that
\begin{align*}
\hspace*{\leftmargini} \sum_{h \pmod*{2p}} \deg Z\left(\frac{n}{4p},h\right) &= \deg \sum_{h \pmod*{2p}} \sum_{Q \in \mathcal{Q}_{p,4p \frac{n}{4p},h} \slash \Gamma_0(p)} \frac{\tau_Q}{\vt{\Gamma_0(p)_Q}} \\
&= \sum_{Q \in \mathcal{Q}_{p,n} \slash \Gamma_0(p)} \frac{1}{\vt{\Gamma_0(p)_Q}}.
\end{align*}
The definition of $\cf(n)$ from equation \eqref{eq:c(n)def} gives
\begin{align*}
\cf(n) = \Ks_{\frac{1}{2},4p}^+\left(0,n;\frac{3}{2}\right),
\end{align*}
We use the first displayed equation on \cite{bemo}*{p.\ 28}. Namely, it states that if $1 \leq n \equiv 0,3 \pmod*{4}$ then
\begin{align} \label{eq:negativecoefficients}
4\pi(1+i)\overline{\Ks_{\frac{1}{2},4p}^+\left(0,-n;\frac{3}{2}\right)} n^{\frac{1}{2}} = \frac{12}{p} \left(H_{1,p}(n) + \frac{p}{1-p}H_{p,p}(n) \right).
\end{align}
Conjugating and mapping $n \mapsto -n$ yields
\begin{align*}
\Ks_{\frac{1}{2},4p}^+\left(0,n;\frac{3}{2}\right) = \frac{12}{p} \frac{1}{4\pi(1-i)n^{\frac{1}{2}}} \left(H_{1,p}(-n) + \frac{p}{1-p}H_{p,p}(-n) \right).
\end{align*}
Hence, we obtain
\begin{align*}
\hspace*{\leftmargini} \frac{p}{p^2-1} \sum_{Q \in \mathcal{Q}_{p,n} \slash \Gamma_0(p)} \frac{1}{\vt{\Gamma_0(p)_Q}} = \frac{2}{p}\left(H_{1,p}(-n) + \frac{p}{1-p}H_{p,p}(-n) \right) + \frac{2}{p-1} H_{1,1}(-n).
\end{align*}
According to the second part of \cite{bemo}*{Theorem 1.1}, we have
\begin{align*}
\frac{1}{1-p}H_{p,p}(-n) = H_{1,1}(-n) - \frac{p+1}{p}H_{1,p}(-n),
\end{align*}
and thus (i) follows.

\item Suppose that $0 < n \equiv 0,1 \pmod*{4}$ is not a square. Using equation \eqref{eq:quadraticformsidentification}, we obtain
\begin{align*}
\hspace*{\leftmargini} \frac{\sqrt{p}}{p^2-1} \sum_{h \pmod*{2p}} \sum_{Q \in \mathcal{Q}_{p,4p \frac{n}{4p},h} \slash \Gamma_0(p)} \mathrm{length}\left(\Gamma_0(p)_Q\backslash S_Q\right)
= \frac{2}{3}(1-i) \pi \cf(n) + \frac{2}{p-1} \frac{h^*(n)}{\sqrt{n}}.
\end{align*}
The definition of $\cf(n)$ from equation \eqref{eq:c(n)def} gives
\begin{align*}
\cf(n) = \Ks_{\frac{1}{2},4p}^+\left(0,n;\frac{3}{2}\right),
\end{align*}
for $n \neq \square$, and utilizing \cite{bemo}*{Proposition 3.4 (ii)} yields
\begin{align*}
\hspace*{\leftmargini} \cf(n) = \frac{L_{4p}\left(1,\chi_t\right)}{L_{4p}\left(2,\id\right)} T_{4p,0}^{\chi_t}(m) \left(A\left(2,tm^2\right)+\frac{1+i}{8}\right)A\left(p,tm^2\right),
\end{align*}
where $A(2,n)$ is as defined in \cite{bemo}*{Proposition 3.4 (ii)}.
Furthermore, we employ Dirichlet's class number formula, namely
\begin{align*}
L\left(1, \chi_t\right) = \frac{2h(t) \ln\left(\widetilde{\varepsilon}_t\right)}{\sqrt{t}},
\end{align*}
getting 
\begin{align*}
\frac{L_{4p}\left(1,\chi_t\right)}{L_{4p}(2, \id)}
&= \frac{L\left(1,\chi_t\right)}{\zeta(2)} \frac{1-\chi_t(2)2^{-1}}{1-2^{-2}} \frac{1-\chi_t(p)p^{-1}}{1-p^{-2}} \\
&= \frac{16p^2}{\pi^2\left(p^2-1\right)} \left(1-\chi_t(2)2^{-1}\right) \left(1-\chi_t(p)p^{-1}\right) \frac{\log\left(\widetilde{\varepsilon}_t\right)}{\sqrt{t}}h(t).
\end{align*} 
Noting that 
\begin{align*}
A(2,n)+\frac{1+i}{8} = 3 \cdot \frac{1+i}{8} \cdot \widetilde{A(2,n)}
\end{align*}
and that the length is normalized by $\frac{1}{2\pi}$ for positive indices (see equation \eqref{eq:trrealdef}) yields part (ii).

\item Suppose that $0 < n \equiv 0,1 \pmod*{4}$ is a square. Write $n = m^2$. Using Lemmas \ref{lem:cuspparameters} and \ref{lem:specialfunctionrelation}, we obtain ($m \mapsto \frac{m}{2p}$)
\begin{multline*}
\hspace*{\leftmargini} 2 \Cf(p) + \frac{\sqrt{p}}{p^2-1} \sum_{h \pmod*{2p}} \left[\tr_{\frac{n}{4p},h}(1) + \frac{p}{\sqrt{p}\pi} \sum_{\af \in \{0,\infty\}}  b_{\af}\left(\frac{n}{4p},h\right) \frac{\alpha\left(4 \frac{n}{4p} (4pv)\right)}{2} \right] \\
= \frac{2}{p-1} \left(\bff(n) + \frac{2}{4\pi}\alpha\left(4nv\right)\right)
+ \frac{1}{\pi(p+1)} \left(\gamma + \log\left(\pi n\right) +  \alpha\left(4nv\right) \right) + \frac{2}{3}(1-i) \pi \cf(n),
\end{multline*}
where we recall that the special function $\alpha$ is renormalized by $\frac{1}{4\pi}$ in \cite{dit11annals}*{Theorem 4} and \cite{alansa}*{(1.7)}. Since $0 < n \equiv 0,1 \pmod*{4}$ is a square, we deduce from the last paragraph of \cite{brufuim}*{Section 2} that
\begin{align*}
\sum_{h \pmod*{2p}} \sum_{\af \in \{0,\infty\}}  b_{\af}\left(\frac{n}{4p},h\right) = 4.
\end{align*}
Hence, the special function $\alpha$ cancels on both sides, and we infer
\begin{multline*}
\hspace*{\leftmargini}  \frac{\sqrt{p}}{p^2-1} \sum_{h \pmod*{2p}} \tr_{\frac{n}{4p},h}(1) \\
= \frac{2}{p-1} \bff(n) + \frac{1}{\pi(p+1)} \left(\gamma + \log\left(\pi n\right) \right) + \frac{2}{3}(1-i) \pi \cf(n) - 2 \Cf(p),
\end{multline*}
We insert the results of Theorem \ref{thm:cfsquareindices} (i), (iii), and the definition of $\Cf(p)$ from equation \eqref{eq:frakCdef} (Theorem \ref{thm:GcAsThetaLift} (ii)), which yields 
\begin{multline*}
\hspace*{\leftmargini}  \frac{\sqrt{p}}{p^2-1} \sum_{h \pmod*{2p}} \tr_{\frac{n}{4p},h}(1) = \frac{2}{p-1} \frac{2}{3\pi} \left(\gamma - 2\frac{\zeta'(2)}{\zeta(2)} - \log(4) + \frac{1}{2}\log(\pi) - \tf_{1}(m)\right) \\
+ \frac{1}{\pi(p+1)} \left(\gamma + \log\left(\pi n\right) \right) 
+ \frac{2}{\pi(p+1)} \left[ \gamma - 2 \frac{\zeta'(2)}{\zeta(2)} +
\log(2) \left(2^{-\nu_2(m)}-3\right) + \right. \\ \left.
\log(p) \left(\frac{1}{p+1} + \frac{p+1}{p-1}p^{-\nu_p(m)} - \frac{2p}{p-1} \right) - \tf_{4p}(m) \right] \\
- 2 \frac{4p}{\pi\left(p^2-1\right)} \left[\gamma - \log(2) - \frac{\zeta'(2)}{\zeta(2)} - \frac{p\log(p)}{2(p+1)} + \frac{1}{4}\left(\log(\pi)-\gamma\right) \right].
\end{multline*}
Lastly, we invoke equations \eqref{eq:quadraticformsidentification} and \eqref{eq:trrealdef} once more. Combining expressions yields part (iii). \qedhere
\end{enumerate}
\end{proof}

\section{The proof of Theorem \ref{thm:specdefcor}} \label{sec:ProofOfThm1.5}

We recall from Lemma \ref{lem:spectraldeformations} that
\begin{align*}
f_s(\tau) = \frac{\pi(p+1)}{6}\frac{1}{\zeta^*(2s-1)} \Fc_{0,p}(\tau,s)
\end{align*}
is a weight $0$ and level $p$ spectral deformation of $f = 1$, and we defined $B_{\af}(s)$ as the coefficient of $v^{1-s}$ in the Fourier expansion of $f_s$ about the cusp $\af$ in equation \eqref{eq:Bafdef}. To make Corollary \ref{cor:bfiCor6.3} explicit, we require the following small lemma.
\begin{lemma} \label{lem:levelreduction}
Let $N$ be square-free, $a \geq 1$, and $\chi_{\ell}(a)^2$ be the principal character of modulus $\ell$. Then, we have
\begin{align*}
\sum_{\ell \mid N} \mu(\ell) \chi_{\ell}(a)^2 = \begin{cases}
1 & \text{if } N \mid a, \\
0 & \text{if } N \nmid a.
\end{cases}
\end{align*}
\end{lemma}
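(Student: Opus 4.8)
The plan is to reduce this alternating divisor sum to the fundamental M\"obius identity $\sum_{d \mid n}\mu(d) = \delta_{n=1}$. The first step is to unwind the hypothesis: since $\chi_\ell(a)^2$ is by assumption the principal character of modulus $\ell$, it equals $1$ when $\gcd(a,\ell)=1$ and $0$ otherwise. Consequently every term with $\gcd(\ell,a)>1$ drops out, and the sum is supported precisely on those divisors $\ell \mid N$ that are coprime to $a$.

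Next I would identify this support set explicitly. Writing $N' \coloneqq \prod_{p \mid N,\, p \nmid a} p$ for the product of the primes of $N$ not dividing $a$, the square-freeness of $N$ guarantees that a divisor $\ell \mid N$ is coprime to $a$ if and only if $\ell \mid N'$. Hence
\begin{align*}
\sum_{\ell \mid N}\mu(\ell)\chi_\ell(a)^2 = \sum_{\ell \mid N'}\mu(\ell) = \delta_{N'=1}
\end{align*}
by the M\"obius identity. The final step is to interpret the condition $N'=1$: it means that no prime dividing $N$ avoids $a$, i.e.\ every prime factor of $N$ divides $a$, which --- again because $N$ is square-free --- is equivalent to $N \mid a$. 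This yields the two cases in the statement.

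There is no genuine obstacle here; the argument is a direct application of M\"obius inversion. The only subtlety worth flagging is that both equivalences ``$\ell \mid N$ and $\gcd(\ell,a)=1 \Leftrightarrow \ell \mid N'$'' and ``$N'=1 \Leftrightarrow N \mid a$'' rely on $N$ being square-free and would fail for an $N$ carrying a repeated prime factor, so I would make sure to invoke that hypothesis explicitly at both places.
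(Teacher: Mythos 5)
Your proposal is correct and follows essentially the same route as the paper: the support of the sum is cut down to divisors coprime to $a$, identified (via square-freeness) with the divisors of the coprime-to-$a$ part $N'$ of $N$, and then the M\"obius identity $\sum_{d \mid n}\mu(d)=\delta_{n=1}$ together with the square-freeness of $N$ gives the equivalence $N'=1 \Leftrightarrow N \mid a$. The only cosmetic difference is that the paper writes the decomposition as $N = N_a N'$ with $\gcd(N',a)=1$, whereas you define $N'$ directly as the product of primes of $N$ not dividing $a$; the arguments are the same.
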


\begin{rmk}
This lemma and a different proof can be found in \cite{mmrw}*{Lemma 7.1} as well.
\end{rmk}

\begin{proof}[Proof of Lemma \ref{lem:levelreduction}]
We decompose $N = N_a N'$, where $N_a$ divides all sufficiently high powers of $a$ and $\gcd(N', a) = 1$. Thus, we infer
\begin{align*}
\sum_{\ell \mid N} \mu(\ell) \chi_{\ell}(a)^2 &= \sum_{\substack{ \ell \mid N \\ \gcd(\ell, a) =1}} \mu(\ell) = \sum_{\ell \mid N'} \mu(\ell) = \begin{cases} 1 & N' = 1, \\
0 & N' > 1,
\end{cases}
\end{align*}
by M\"obius inversion. The result follows from observing that $N' = 1$ precisely when $N = N_a$, and since we are assuming $N$ is square-free, this is equivalent to $N$ dividing $a$.
\end{proof}

Now, we are in position to prove Theorem \ref{thm:specdefcor}.
\begin{proof}[Proof of Theorem \ref{thm:specdefcor}]
Recall that $b_{\af}(m,h)$ is the $(m,h)$-th Fourier coefficient of $\tilde\Theta_{K_\af}(\tau)$. Set
\begin{align*}
\lambda_{\af} \coloneqq \sum_{h \pmod*{2p}} b_{\af} (0,h).
\end{align*}
Combining Lemma \ref{lem:Eisensteinprojection} and Corollary \ref{cor:bfiCor6.3} as in the proof of Theorem \ref{thm:GcAsThetaLift} (ii) yields
\begin{multline*} 
\frac{1}{\sqrt{p}}\sum_{h \pmod*{2p}} I_h^{\reg}(4p\tau,1) = \frac{\pi (p+1)}{2} \CT_{s=1} \left[\frac{4^{\frac{s}{2}}\zeta^*(s)}{\zeta^*(2s-1)} \Fc_{\frac{1}{2},4p}^+\left(\tau,\frac{s}{2}+\frac{1}{4}\right) \right] \\
- \left(\frac{\partial}{\partial s}B_{\infty}(s)\Big\vert_{s=1} \cdot \lambda_{\infty}\theta(\tau) + \frac{\partial}{\partial s}B_{0}(s)\Big\vert_{s=1} \cdot \lambda_{0}\theta(\tau)\right),
\end{multline*}
compare equation \eqref{eq:specdefprojected}. On \cite{brufuim}*{p.\ 90}, it is stated that $b_{\af}(0,h) = 1$ if and only if $\tilde{h} = 0$. Otherwise, $b_{\af}(0,h) = 0$. Hence, we have
\begin{align*}
\lambda_{0} = \lambda_{\infty} = 1,
\end{align*}
and it suffices to determine $B_{\af}(s)$ for $\af \in \{0, \infty\}$. 
\begin{enumerate}
\item Lemma \ref{lem:Eisensteinexpansions} (i) gives
\begin{align*}
\Fc_{0,p}(\tau,s) = v^{s} +  \frac{4^{1-s}\pi \Gamma(2s-1)}{\Gamma\left(s\right)^2} \Ks_{0,p}(0,0;2s) v^{1-s} + \ldots,
\end{align*}
and Lemma \ref{lem:levelreduction} implies that
\begin{align*}
\Ks_{0,p}(0,0;2s) = \frac{\zeta(2s-1)}{\zeta(2s)} \left(1-\frac{1-p^{-(2s-1)}}{1-p^{-2s}}\right) = \frac{\zeta(2s-1)}{\zeta(2s)} \frac{p-1}{p^{2s}-1}.
\end{align*}
Thus, the spectral deformations $f_s$ from Lemma \ref{lem:spectraldeformations} have the constant terms
\begin{align*}
f_s(\tau) = \frac{\pi^{s+\frac{1}{2}}(p+1)}{6\Gamma\left(s-\frac{1}{2}\right)\zeta(2s-1)} v^{s} + \frac{p^2-1}{p^{2s}-1} \frac{\pi^{s+1}}{6 \Gamma(s)\zeta(2s)} v^{1-s} + \ldots
\end{align*}
about $i\infty$. Consequently, we have
\begin{multline*}
\hspace*{\leftmargini} B_{\infty}(s) = \frac{p^2-1}{p^{2s}-1} \frac{\pi^{s+1}}{6 \Gamma(s)\zeta(2s)} = 1 + \frac{1}{\pi^2\left(p^2-1\right)} \\
+ \pi^2\left[\left(p^2-1\right)\left(\gamma+\log(\pi)-2\frac{\zeta'(2)}{\zeta(2)}\right) - 2p^2\log(p)\right](s-1) + O\left((s-1)^2\right),
\end{multline*}
from which we read off $\frac{\partial}{\partial s}B_{\infty}(s)\big\vert_{s=1}$ directly.

\item Lemma \ref{lem:Eisensteinexpansions} (ii) gives
\begin{align*}
\left(\Fc_{0,p} \big\vert_0 W_p\right) (\tau,s) = \frac{4^{1-s} \pi \Gamma(2s-1)}{\Gamma(s)^2 p^s} \widetilde{\Ks}_{0,p}(0,0;2s) v^{1-s} + \ldots,
\end{align*}
and a quick computation shows (or see \cite{peiwangbook}*{Proposition 2.1} for example)
\begin{align*}
\widetilde{\Ks}_{0,p}(0,0;2s) = \frac{\zeta(2s-1)}{\zeta(2s)} \frac{1-p^{-(2s-1)}}{1-p^{-2s}} = \frac{\zeta(2s-1)}{\zeta(2s)} \frac{p^{2s}-p}{p^{2s}-1}.
\end{align*}
Thus, the spectral deformations $f_s$ from Lemma \ref{lem:spectraldeformations} have the constant terms
\begin{align*}
\left(f_s\big\vert_0 W_p\right)(\tau) = \frac{(p+1) \left(p^{2s}-p\right)\pi^{s+1}}{6\left(p^{2s}-1\right)\Gamma(s)\zeta(2s)} v^{1-s} + \ldots
\end{align*}
about $0$. Consequently, we have
\begin{multline*}
\hspace*{\leftmargini} B_{0}(s) = \frac{(p+1) \left(p^{2s}-p\right)\pi^{s+1}}{6\left(p^{2s}-1\right)\Gamma(s)\zeta(2s)} = p + \frac{1}{\pi^2\left(p^2-1\right)} \\
+ \pi^2\left[p\left(p^2-1\right)\left(\gamma+\log(\pi)-2\frac{\zeta'(2)}{\zeta(2)}\right) + 2p^2\log(p)\right](s-1) + O\left((s-1)^2\right),
\end{multline*}
from which we read off $\frac{\partial}{\partial s}B_{0}(s)\big\vert_{s=1}$ directly.
\end{enumerate}

Using Theorem \ref{thm:GcAsThetaLift} (ii), we arrive at
\begin{multline*}
\frac{1}{p}\left(\frac{2}{p-1}\Ec(\tau) + \Gc(\tau) - \Cf(p) \theta(\tau)\right)
= \frac{\pi}{2(p-1)} \CT_{s=1} \left[\frac{4^{\frac{s}{2}}\zeta^*(s)}{\zeta^*(2s-1)} \Fc_{\frac{1}{2},4p}^+\left(\tau,\frac{s}{2}+\frac{1}{4}\right) \right] \\
- \pi^2(p+1)\left(\gamma+\log(\pi)-2\frac{\zeta'(2)}{\zeta(2)}\right) \theta(\tau).
\end{multline*}
Rearranging completes the proof.
\end{proof}

\begin{bibsection}
\begin{biblist}
\bib{ahland}{article}{
   author={Ahlgren, S.},
   author={Andersen, N.},
   title={Algebraic and transcendental formulas for the smallest parts
   function},
   journal={Adv. Math.},
   volume={289},
   date={2016},
   pages={411--437},
}

\bib{alansa}{article}{
   author={Ahlgren, S.},
   author={Andersen, N.},
   author={Samart, D.},
   title={A polyharmonic Maass form of depth $3/2$ for $\slz$},
   journal={J. Math. Anal. Appl.},
   volume={468},
   date={2018},
   number={2},
   pages={1018--1042},
}

\bib{alfesthesis}{thesis}{
   author={Alfes, C.},
   title={CM values and Fourier coefficients of harmonic Maass forms},
   type={Ph.D. Thesis},
   organization={TU Darmstadt},
   date={2015},
}

\bib{air}{webpage}{
   author={Alfes-Neumann, C.},
   author={Burban, I.},
   author={Raum, M.},
   title={A classification of polyharmonic Maa{\ss} forms via quiver representations},
   url={https://arxiv.org/abs/2207.02278},
   year={2022},
   note={preprint},
}

\bib{alfehl}{article}{
   author={Alfes, C.},
   author={Ehlen, S.},
   title={Twisted traces of CM values of weak Maass forms},
   journal={J. Number Theory},
   volume={133},
   date={2013},
   number={6},
   pages={1827--1845},
}

\bib{anbs}{article}{
   author={Alfes-Neumann, C.},
   author={Bringmann, K.},
   author={Males, J.},
   author={Schwagenscheidt, M.},
   title={Cycle integrals of meromorphic modular forms and coefficients of
   harmonic Maass forms},
   journal={J. Math. Anal. Appl.},
   volume={497},
   date={2021},
   number={2},
   pages={Paper No. 124898, 15},
}

\bib{abs}{webpage}{
   author={Allen, M.},
   author={Beckwith, O.},
   author={Sharma, V.},
   title={Holomorphic projection for sesquiharmonic Maass forms},
   url={https://arxiv.org/abs/2411.05972},
   year={2024},
   note={preprint},
}

\bib{an15}{article}{
   author={Andersen, N.},
   title={Periods of the $j$-function along infinite geodesics and mock
   modular forms},
   journal={Bull. Lond. Math. Soc.},
   volume={47},
   date={2015},
   number={3},
   pages={407--417},
}

\bib{andu}{article}{
   author={Andersen, N.},
   author={Duke, W.},
   title={Modular invariants for real quadratic fields and Kloosterman sums},
   journal={Algebra Number Theory},
   volume={14},
   date={2020},
   number={6},
   pages={1537--1575},
}

\bib{anlagrho}{article}{
   author={Andersen, N.},
   author={Lagarias, J.\ C.},
   author={Rhoades, R.\ C.},
   title={Shifted polyharmonic Maass forms for ${\text {\rm PSL}}_2(\mathbb{Z})$},
   journal={Acta Arith.},
   volume={185},
   date={2018},
   number={1},
   pages={39--79},
}

\bib{andrews}{article}{
   author={Andrews, G. E.},
   title={The number of smallest parts in the partitions of $n$},
   journal={J. Reine Angew. Math.},
   volume={624},
   date={2008},
   pages={133--142},
}

\bib{bemo}{webpage}{
   author={Beckwith, O.},
   author={Mono, A.},
   title={A modular framework of generalized Hurwitz class numbers},
   url={https://arxiv.org/abs/2403.17829},
   year={2024},
   note={preprint},
}

\bib{bor98}{article}{
	author={Borcherds, R.},
	title={Automorphic forms with singularities on Grassmannians},
	journal={Invent. Math.},
	volume={132},
	date={1998},
	number={3},
	pages={491--562},
}

\bib{bor00}{article}{
   author={Borcherds, R.\ E.},
   title={Reflection groups of Lorentzian lattices},
   journal={Duke Math. J.},
   volume={104},
   date={2000},
   number={2},
   pages={319--366},
}

\bib{brdira}{article}{
   author={Bringmann, K.},
   author={Diamantis, N.},
   author={Raum, M.},
   title={Mock period functions, sesquiharmonic Maass forms, and non-critical values of $L$-functions},
   journal={Adv. Math.},
   volume={233},
   date={2013},
   pages={115--134},
}

\bib{thebook}{book}{
   author={Bringmann, K.},
   author={Folsom, A.},
   author={Ono, K.},
   author={Rolen, L.},
   title={Harmonic Maass forms and mock modular forms: theory and applications},
   series={American Mathematical Society Colloquium Publications},
   volume={64},
   publisher={American Mathematical Society, Providence, RI},
   date={2017},
   pages={xv+391},
}

\bib{brika20}{article}{
   author={Bringmann, K.},
   author={Kane, B.},
   title={An extension of Rohrlich's theorem to the $j$-function},
   journal={Forum Math. Sigma},
   volume={8},
   date={2020},
   pages={Paper No. e3, 33},
}

\bib{bruinierhabil}{book}{
	author={Bruinier, J.\ H.},
	title={Borcherds products on O(2, $l$) and Chern classes of Heegner
		divisors},
	series={Lecture Notes in Mathematics},
	volume={1780},
	publisher={Springer-Verlag, Berlin},
	date={2002},
}

\bib{BEY}{article}{
   author={Bruinier, J.\ H.},
   author={Ehlen, S.},
   author={Yang, T.},
   title={CM values of higher automorphic Green functions for orthogonal
   groups},
   journal={Invent. Math.},
   volume={225},
   date={2021},
   number={3},
   pages={693--785},
}

\bib{brufu02}{article}{
   author={Bruinier, J.\ H.},
   author={Funke, J.},
   title={On two geometric theta lifts},
   journal={Duke Math. J.},
   volume={125},
   date={2004},
   number={1},
   pages={45--90},
}

\bib{brufu06}{article}{
   author={Bruinier, J.\ H.},
   author={Funke, J.},
   title={Traces of CM values of modular functions},
   journal={J. Reine Angew. Math.},
   volume={594},
   date={2006},
   pages={1--33},
}

\bib{brufuim}{article}{
   author={Bruinier, J.\ H.},
   author={Funke, J.},
   author={Imamo\={g}lu, \"{O}.},
   title={Regularized theta liftings and periods of modular functions},
   journal={J. Reine Angew. Math.},
   volume={703},
   date={2015},
   pages={43--93},
}

\bib{bruku}{article}{
   author={Bruinier, J.\ H.},
   author={Kuss, M.},
   title={Eisenstein series attached to lattices and modular forms on
   orthogonal groups},
   journal={Manuscripta Math.},
   volume={106},
   date={2001},
   number={4},
   pages={443--459},
}

\bib{bruono13}{article}{
   author={Bruinier, J.\ H.},
   author={Ono, K.},
   title={Algebraic formulas for the coefficients of half-integral weight
   harmonic weak Maass forms},
   journal={Adv. Math.},
   volume={246},
   date={2013},
   pages={198--219},
}

\bib{BO}{article}{
   author={Bruinier, J.\ H.},
   author={Ono, K.},
   title={Heegner divisors, $L$-functions and harmonic weak Maass forms},
   journal={Ann. of Math. (2)},
   volume={172},
   date={2010},
   number={3},
   pages={2135--2181},
}

\bib{BS}{article}{
	title={Theta lifts for Lorentzian lattices and coefficients of mock theta functions},
	journal={Math. Z.},
	publisher={Springer Science and Business Media LLC},
	author={Bruinier, J.\ H.},
	author={Schwagenscheidt, M.},
	year={2020},
}

\bib{BY}{article}{
	author={Bruinier, J.\ H.},
   author={Yang, T.},
   title={Faltings heights of CM cycles and derivatives of $L$-functions},
   journal={Invent. Math.},
   volume={177},
   date={2009},
   number={3},
   pages={631--681},
}

\bib{cohen75}{article}{
   author={Cohen, H.},
   title={Sums involving the values at negative integers of $L$-functions of quadratic characters},
   journal={Math. Ann.},
   volume={217},
   date={1975},
   number={3},
   pages={271--285},
}

\bib{crfu}{article}{
   author={Crawford, J.},
   author={Funke, J.},
   title={The Shimura-Shintani correspondence via singular theta lifts and
   currents},
   journal={Int. J. Number Theory},
   volume={19},
   date={2023},
   number={10},
   pages={2383--2417},
}

\bib{dit11annals}{article}{
   author={Duke, W.},
   author={Imamo\={g}lu, \"{O}.},
   author={T\'{o}th, \'{A}.},
   title={Cycle integrals of the $j$-function and mock modular forms},
   journal={Ann. of Math. (2)},
   volume={173},
   date={2011},
   number={2},
   pages={947--981},
}

\bib{ditimrn}{article}{
   author={Duke, W.},
   author={Imamo\=glu, \"O.},
   author={T\'oth, \'A.},
   title={Real quadratic analogs of traces of singular moduli},
   journal={Int. Math. Res. Not. IMRN},
   date={2011},
   number={13},
   pages={3082--3094},
}

\bib{eiza}{book}{
   author={Eichler, M.},
   author={Zagier, D.},
   title={The theory of Jacobi forms},
   series={Progress in Mathematics},
   volume={55},
   publisher={Birkh\"{a}user Boston, Inc., Boston, MA},
   date={1985},
   pages={v+148},
}

\bib{funke}{article}{
   author={Funke, Jens},
   title={Heegner divisors and nonholomorphic modular forms},
   journal={Compositio Math.},
   volume={133},
   date={2002},
   number={3},
   pages={289--321},
}

\bib{garthwaite}{article}{
   author={Garthwaite, S.\ A.},
   title={Vector-valued Maass-Poincar\'{e} series},
   journal={Proc. Amer. Math. Soc.},
   volume={136},
   date={2008},
   number={2},
   pages={427--436},
}

\bib{goldstein}{article}{
   author={Goldstein, L.\ J.},
   title={Dedekind sums for a Fuchsian group. I},
   journal={Nagoya Math. J.},
   volume={50},
   date={1973},
   pages={21--47},
}

\bib{Gross-Kohnen-Zagier}{article}{
   author={Gross, B.},
   author={Kohnen, W.},
   author={Zagier, D.},
   title={Heegner points and derivatives of $L$-series. II},
   journal={Math. Ann.},
   volume={278},
   date={1987},
   number={1-4},
   pages={497--562},
}

\bib{hamo}{article}{
   author={Harvey, J. A.},
   author={Moore, G.},
   title={Algebras, BPS states, and strings},
   journal={Nuclear Phys. B},
   volume={463},
   date={1996},
   number={2-3},
   pages={315--368},
}

\bib{hoevel}{thesis}{
   author={Hövel, M.},
   title={Automorphe Formen mit Singularitäten auf dem hyperbolischen Raum},
   type={Ph.D. Thesis},
   organization={TU Darmstadt},
   date={2012},
}

\bib{jkk1}{article}{
   author={Jeon, D.},
   author={Kang, S.-Y.},
   author={Kim, C. H.},
   title={Weak Maass-Poincar\'{e} series and weight 3/2 mock modular forms},
   journal={J. Number Theory},
   volume={133},
   date={2013},
   number={8},
   pages={2567--2587},
}

\bib{jkk14}{article}{
   author={Jeon, D.},
   author={Kang, S.-Y.},
   author={Kim, C.\ H.},
   title={Cycle integrals of a sesqui-harmonic Maass form of weight zero},
   journal={J. Number Theory},
   volume={141},
   date={2014},
   pages={92--108},
}

\bib{jkk24}{webpage}{
   author={Jeon, D.},
   author={Kang, S.-Y.},
   author={Kim, C.\ H.},
   title={Hecke Equivariance of Divisor Lifting with respect to Sesquiharmonic Maass Forms},
   year={2024},
   url={https://arxiv.org/abs/2407.21447},
   note={preprint},
}

\bib{jkkm24}{webpage}{
   author={Jeon, D.},
   author={Kang, S.-Y.},
   author={Kim, C.\ H.},
   author={Matsusaka, T.},
   title={A unified approach to Rohrlich-type divisor sums},
   year={2024},
   url={https://arxiv.org/abs/2410.12571},
   note={preprint},
}

\bib{katoksarnak}{article}{
   author={Katok, S.},
   author={Sarnak, P.},
   title={Heegner points, cycles and Maass forms},
   journal={Israel J. Math.},
   volume={84},
   date={1993},
   number={1-2},
   pages={193--227},
}

\bib{kohnen85}{article}{
	AUTHOR = {Kohnen, W.},
	TITLE = {Fourier coefficients of modular forms of half-integral weight},
	JOURNAL = {Math. Ann.},
	VOLUME = {271},
	YEAR = {1985},
	NUMBER = {2},
	PAGES = {237--268},
}

\bib{koza81}{article}{
   author={Kohnen, W.},
   author={Zagier, D.},
   title={Values of $L$-series of modular forms at the center of the
   critical strip},
   journal={Invent. Math.},
   volume={64},
   date={1981},
   number={2},
   pages={175--198},
}

\bib{koza84}{article}{
   author={Kohnen, W.},
   author={Zagier, D.},
   title={Modular forms with rational periods},
   conference={
      title={Modular forms},
      address={Durham},
      date={1983},
   },
   book={
      series={Ellis Horwood Ser. Math. Appl.: Statist. Oper. Res.},
      publisher={Horwood, Chichester},
   },
   date={1984},
   pages={197--249},
}

\bib{kubota}{book}{
   author={Kubota, T.},
   title={Elementary theory of Eisenstein series},
   publisher={Kodansha, Ltd., Tokyo; Halsted Press [John Wiley \& Sons,
   Inc.], New York-London-Sydney},
   date={1973},
   pages={xi+110},
}

\bib{kumi}{article}{
   author={Kudla, S. S.},
   author={Millson, J. J.},
   title={The theta correspondence and harmonic forms. I},
   journal={Math. Ann.},
   volume={274},
   date={1986},
   number={3},
   pages={353--378},
}

\bib{kumi2}{article}{
   author={Kudla, S. S.},
   author={Millson, J. J.},
   title={The theta correspondence and harmonic forms. II},
   journal={Math. Ann.},
   volume={277},
   date={1987},
   number={2},
   pages={267--314},
}

\bib{lagrho}{article}{
   author={Lagarias, J.\ C.},
   author={Rhoades, R.\ C.},
   title={Polyharmonic Maass forms for ${\text {\rm PSL}}_2(\mathbb{Z})$},
   journal={Ramanujan J.},
   volume={41},
   date={2016},
   number={1-3},
   pages={191--232},
}

\bib{luozhou}{webpage}{
   author={Luo, Y.},
   author={Zhou, H.},
   title={The classification and representations of positive definite ternary quadratic forms of level $4N$},
   year={2024},
   url={https://arxiv.org/abs/2402.17443},
   note={preprint},
}

\bib{maass64}{book}{
   author={Maass, H.},
   title={Lectures on modular functions of one complex variable},
   series={Tata Institute of Fundamental Research Lectures on Mathematics
   and Physics},
   volume={29},
   edition={2},
   note={With notes by Sunder Lal},
   publisher={Tata Institute of Fundamental Research, Bombay},
   date={1983},
   pages={iii+262},
}

\bib{males}{article}{
   author={Males, J.},
   title={Higher Siegel theta lifts on Lorentzian lattices, harmonic Maass
   forms, and Eichler-Selberg type relations},
   journal={Math. Z.},
   volume={301},
   date={2022},
   number={4},
   pages={3555--3569},
}

\bib{mamo}{article}{
   author={Males, J.},
   author={Mono, A.},
   title={Local Maass forms and Eichler-Selberg relations for
   negative-weight vector-valued mock modular forms},
   journal={Pacific J. Math.},
   volume={322},
   date={2023},
   number={2},
   pages={381--406},
}

\bib{mmrw}{webpage}{
   author={Males, J.},
   author={Mono, A.},
   author={Rolen, L.},
   author={Wagner, I.},
   title={Central $L$-values of newforms and local polynomials},
   year={2023},
   url={https://arxiv.org/abs/2306.15519},
   note={preprint},
}

\bib{mat20}{article}{
   author={Matsusaka, T.},
   title={Polyharmonic weak Maass forms of higher depth for $\slz$},
   journal={Ramanujan J.},
   volume={51},
   date={2020},
   number={1},
   pages={19--42},
}

\bib{mat19}{article}{
   author={Matsusaka, T.},
   title={Traces of CM values and cycle integrals of polyharmonic Maass forms},
   journal={Res. Number Theory},
   volume={5},
   date={2019},
   number={1},
   pages={Paper No. 8, 25},
}

\bib{niwa}{article}{
   author={Niwa, S.},
   title={Modular forms of half integral weight and the integral of certain
   theta-functions},
   journal={Nagoya Math. J.},
   volume={56},
   date={1975},
   pages={147--161},
}

\bib{peiwang}{article}{
   author={Pei, D.},
   author={Wang, X.},
   title={A generalization of Cohen-Eisenstein series and Shimura liftings and some congruences between cusp forms and Eisenstein series},
   journal={Abh. Math. Sem. Univ. Hamburg},
   volume={73},
   date={2003},
   pages={99--130},
}

\bib{peiwangbook}{book}{
   author={Pei, D.},
   author={Wang, X.},
   title={Modular forms with integral and half-integral weights},
   publisher={Science Press Beijing, Beijing; Springer, Heidelberg},
   date={2012},
   pages={x+432},
}

\bib{schw18}{thesis}{
   author={Schwagenscheidt, M.},
   title={Regularized Theta Lifts of Harmonic Maass Forms},
   type={Ph.D. Thesis},
   organization={TU Darmstadt},
   date={2018},
}

\bib{scheit}{article}{
   author={Scheithauer, N.},
   title={Generalized Kac-Moody algebras, automorphic forms and Conway's
   group. I},
   journal={Adv. Math.},
   volume={183},
   date={2004},
   number={2},
   pages={240--270},
}

\bib{sest}{article}{
   author={Serre, J.-P.},
   author={Stark, H. M.},
   title={Modular forms of weight $1/2$},
   conference={
      title={Modular functions of one variable, VI},
      address={Proc. Second Internat. Conf., Univ. Bonn, Bonn},
      date={1976},
   },
   book={
      series={Lecture Notes in Math.},
      volume={Vol. 627},
      publisher={Springer, Berlin-New York},
   },
   date={1977},
   pages={27--67},
}

\bib{shim}{article}{
   author={Shimura, G.},
   title={On modular forms of half integral weight},
   journal={Ann. of Math. (2)},
   volume={97},
   date={1973},
   pages={440--481},
}

\bib{shin}{article}{
   author={Shintani, T.},
   title={On construction of holomorphic cusp forms of half integral weight},
   journal={Nagoya Math. J.},
   volume={58},
   date={1975},
}

\bib{siegel56}{article}{
   author={Siegel, C.\ L.},
   title={Die Funktionalgleichungen einiger Dirichletscher Reihen},
   journal={Math. Z.},
   volume={63},
   date={1956},
   pages={363--373},
}

\bib{vass}{thesis}{
   author={Vassileva, I.\ N.},
   title={Dedekind eta function, Kronecker limit formula and Dedekind sum for the Hecke group},
   type={Ph.D. Thesis},
   organization={University of Massachusetts Amherst},
   date={1996},
}

\bib{wald1}{article}{
   author={Waldspurger, J.-L.},
   title={Correspondance de Shimura},
   language={French},
   journal={J. Math. Pures Appl. (9)},
   volume={59},
   date={1980},
   number={1},
   pages={1--132},
}

\bib{wald2}{article}{
   author={Waldspurger, J.-L.},
   title={Sur les coefficients de Fourier des formes modulaires de poids
   demi-entier},
   language={French},
   journal={J. Math. Pures Appl. (9)},
   volume={60},
   date={1981},
   number={4},
   pages={375--484},
}

\bib{zagier75}{article}{
   author={Zagier, D.},
   title={Nombres de classes et formes modulaires de poids $3/2$},
   language={French, with English summary},
   journal={C. R. Acad. Sci. Paris S{\'e}r. A-B},
   volume={281},
   date={1975},
   number={21},
   pages={Ai, A883--A886},
}

\bib{zagier76}{article}{
   author={Zagier, D.},
   title={On the values at negative integers of the zeta-function of a real quadratic field},
   journal={Enseign. Math. (2)},
   volume={22},
   date={1976},
   number={1-2},
   pages={55--95},
}

\bib{zagier02}{article}{
   author={Zagier, D.},
   title={Traces of singular moduli},
   conference={
      title={Motives, polylogarithms and Hodge theory, Part I},
      address={Irvine, CA},
      date={1998},
   },
   book={
      series={Int. Press Lect. Ser.},
      volume={3, I},
      publisher={Int. Press, Somerville, MA},
   },
   date={2002},
   pages={211--244},
}

\end{biblist}
\end{bibsection}

\end{document}